\pgfplotsset{compat=1.6}
\pgfplotsset{every axis title/.append style={at={(0.6,1.1)}}}
\definecolor{brightlavender}{rgb}{0.75, 0.58, 0.89}
\definecolor{amethyst}{rgb}{0.6, 0.4, 0.8} 	
\definecolor{blue-violet}{rgb}{0.54, 0.17, 0.89} 	
\definecolor{burgundy}{rgb}{0.5, 0.0, 0.13} 	
\definecolor{burntorange}{rgb}{0.93, 0.53, 0.18} 	
\definecolor{earthyellow}{rgb}{0.88, 0.66, 0.37}
\definecolor{applegreen}{rgb}{0.55, 0.71, 0.0}	
\definecolor{antiquefuchsia}{rgb}{0.57, 0.36, 0.51}
\definecolor{ultramarine}{rgb}{0.07, 0.04, 0.56}
\numberwithin{equation}{section}
\newtheorem{theorem}{Theorem}[section]
\newtheorem{proposition}[theorem]{Proposition}
\newtheorem{lemma}[theorem]{Lemma}
\newtheorem{corollary}[theorem]{Corollary}
\newtheorem{definition}[theorem]{Definition}
\begin{document}

\title[Johnson Sub-graphs]{On sub-graphs of the Johnson $\mathcal{J}(n,k)$ graph induced by particle systems on simple graphs.}

\author[J. W. Fischer]{\textbf{\quad {Jens Walter} Fischer$^{\spadesuit\clubsuit}$ \, \, }}
\address{{\bf {Jens Walter} FISCHER},\\ Institut de Math\'ematiques de Toulouse. CNRS UMR 5219. \\
Universit\'e Paul Sabatier
\\ 118 route
de Narbonne, F-31062 Toulouse cedex 09.} \email{jens.fischer@math.univ-toulouse.fr}

\begin{abstract}
	Understanding the topology of the state space has proven to be extremely efficient for dynamical systems with a continuous state space. On the other hand, for particle systems on finite simple graphs, it has not yet been subject to deep investigation due to combinatorial hurdles and existing efficient spectral theoretic approaches to the analysis of classical particle systems. In the context of complex systems with heterogeneous interactions of particles, these techniques can break down due to intractability. This work provides a tool box of results on the topology of state spaces of particles systems under the sole conditions that exactly one particle moves at a time and no two particles may occupy the same vertex at the same time. The Johnson $\mathcal{J}(n,k)$ graph yields the overarching structure and we prove that any particle system may be embedded as a dynamical system on a sub-graph of $\mathcal{J}(n,k)$.    
\end{abstract}
\bigskip

\maketitle

\textsc{$^{\spadesuit}$  Universit\'e de Toulouse}
\smallskip

\textsc{$^{\clubsuit}$ University of Potsdam}
\smallskip

\textit{ Key words : Johnson graph, Intersection Graph Theory, Vertex-induced Sub-graphs, Combinatorics, Complex Systems}  
\bigskip

\textit{ MSC 2010 : } 
\vspace{25pt}

\begin{center}
	\textbf{\large{This is work in progress. It is not necessarily in a perfect form.}} \\ \textbf{\large{Additional results may be added over time.}}\vspace{5pt}
	\\ EDIT 02.11.2022: It came to my attention that identical concepts were introduced in \cite{AUDENAERT200774} as \textit{symmetric $k$th power of a graph} and in \cite{Fabila-Monroy2012} as \textit{Token Graphs}. Our results, while of fundamental nature, are still complementary to what has been discussed in \cite{AUDENAERT200774} and \cite{Fabila-Monroy2012} as well as to work based on those two publications.
\end{center}
\vspace{25pt}

\section{Introduction}
	Particle systems yield in the context of complex systems some accessible toy models, which allow for a reduction of the network complexity and disorder induced by the particle configuration due to an imposed crude structure. See for example \cite{10.1214/aoap/1177005359} and the discussion of the exclusion process therein, in particular the case of regular graphs, which are in spite of the richness of this class of graphs, covered by the same result. This breaks down as soon as the structure of these toy models is altered ever so slightly. This work is dedicated to the topology of graphs representing, in particular, the state space of exclusion processes with $k$ indistinguishable particles on a simple finite connected graph $L=(V,E)$. We discuss in various examples their link to random graphs and complex systems, in particular, of the social network type with dynamic relationships between individuals. These state spaces can be identified with another graph $\mathfrak{L}_k$ which represents the possible configurations of particles on the underlying simple graph. We will call $\mathfrak{L}_k$ the $k$ particle graph (kPG) to honour the particle system motivation. The vertex set of $\mathfrak{L}_k$ will be given by the $k$ subsets of $V$ neighborhood relationship in $\mathfrak{L}_k$ will be defined by the symmetric difference of two configurations which corresponds to the natural transition behavior of exclusion processes. This links $\mathfrak{L}_k$ naturally to the Johnson graph $\mathcal{J}(|V|,k)$, as defined for example in \cite{holton_sheehan_1993}, and $\mathcal{J}(|V|,k)\stackrel{=}{\sim}\mathfrak{L}_k$ if $L=L_c$ is the complete graph on $V$. Consider diagram Figure \ref{fig:diagram_L_Lk_Jnk_intro} and the links represented therein. 

\begin{figure}[!htb]
	\centering
	\begin{tikzpicture}[scale = 0.5]
		\Vertex[x=-3,y=3,label={$L$},size = 1.5]{t1}
		\Vertex[x=3,y=3,label={$L_c$},size = 1.5]{t2}
		\Vertex[x=-3,y=-3,label={$\mathfrak{L}_k$},size = 1.5]{t3}
		\Vertex[x=3,y=-3,label={$\mathcal{J}(\bar{n},k)$},size = 1.5]{t4}
		\Edge[Direct, label={$\hookrightarrow$}](t1)(t2)
		\Edge[Direct, label={$\Psi_k$}](t1)(t3)
		\Edge[Direct, label={$\Psi_k$}](t2)(t4)
		\Edge[Direct, label={$\hookrightarrow$}](t3)(t4);
	\end{tikzpicture}
	\caption{Relationships of graphs which will be discussed in what follows. The graph $L_c$ is the complete graph on the same vertex set as $L$. The map $\Psi_k$ lifts the underlying graph $L$ to the kPG using subsets of $V$ and respecting their symmetric difference.\label{fig:diagram_L_Lk_Jnk_intro}}
\end{figure}
It shows dependencies and implications of the properties of one graph and possible deductions and arguments we can formulate, by moving flexibly through the diagram. We start with the formal definition of the kPG, some related graphs, its general properties before moving on to geometric properties and quantitative results under the assumption that $L$ is regular. This assumption turns out to be weaker than it seems since it applies to important contexts in complex systems, for example the underlying state space topology of models dealing with dynamic relationships in any kind of network.

\section{Definiton and General Properties}
	For any finite simple graph $L=(V,E)$ and an integer $k\in\{1,\hdots,|V|-1\}$ we consider in this work the $(L,k)$ induced sub-graph of the Johnson graph $\mathcal{J}(|V|,k)$ where $L$ is the defining structure for the edges in said graph. 
\begin{definition}\label{def:mathfrak_L_k}
	Consider a finite simple graph $L=(V,E)$. Define $\mathfrak{V}_k=\{\mathfrak{v}\subset v|\;|\mathfrak{v}|=k\}$ and $\mathfrak{E}_k=\{\langle\mathfrak{v},\mathfrak{w}\rangle|\mathfrak{v}\triangle\mathfrak{w}=\{v,w\};\langle v,w\rangle\in E\}$. We denote by $\mathfrak{L}_k=(\mathfrak{V}_k,\mathfrak{E}_k)$ the $(L,k)$ induced sub-graph of the Johnson graph $\mathcal{J}(|V|,k)$ and by $\mathrm{deg}_k(\mathfrak{v})$ the degree of $\mathfrak{v}\in\mathfrak{V}_k$ in $\mathfrak{L}_k$. We call $\mathfrak{L}_k$ the $k$-particle graph on $L$.
\end{definition}
A related structure but on the vertex set $V^k$ was considered in \cite{WILSON197486} and \cite{Greenwell1974} motivated by the solvability of the so called $15$-puzzle. This puzzle consist of a quadratic box which can fit $16$ quadratic tiles of adequate size and $15$ small enumerated quadratic tiles which correspond to this size carrying numbers $1$ to $15$. The game then consists in ordering the tiles row by row only by sliding one tile at the time using the remaining free slot, the first row containing the numbers $1$ to $4$, the second one the numbers $5$ to $8$ and so on. One can wonder about the existence of a solution to this task depending on the initial configuration of the tiles. \par
In contrast, to the best of our knowledge, the graph $\mathfrak{L}_k$ as defined in Definition \ref{def:mathfrak_L_k}, based on some underlying graph $L$, has not yet been considered in the literature. We discuss its rich structure and possible implications of a deeper understanding of it on the analysis of, among various others, vertex induced sub-graphs $L$ throughout the remainder of this section. In Figure \ref{fig:construction_L_from_C4} we show how to construct $\mathfrak{L}_k$ from an underlying graph $L$ to illustrate its structure.  

\begin{figure}[!htp]
	\centering
	\begin{tikzpicture}[scale = 0.5]
		\Vertex[x=-2,y=-2,label=$1$]{t1}
		\Vertex[x=-2,y=2,label=$2$]{t2}
		\Vertex[x=2,y=2,label=$3$]{t3}
		\Vertex[x=2,y=-2,label=$4$]{t4}
		\Edge(t1)(t2)
		\Edge(t2)(t3)
		\Edge(t3)(t4)
		\Edge(t1)(t4);
		\Vertex[x=10,y=2,size=1.2,label = {$\{1,2\}$}]{a1}
		\Vertex[x=10,y=-2,size=1.2,label = {$\{1,3\}$}]{a2}
		\Vertex[x=13,y=4,size=1.2,label = {$\{1,4\}$}]{a3}
		\Vertex[x=16,y=-2,size=1.2,label = {$\{2,3\}$}]{a4}
		\Vertex[x=16,y=2,size=1.2,label = {$\{2,4\}$}]{a5}
		\Vertex[x=13,y=-4,size=1.2,label = {$\{3,4\}$}]{a6}
		\Edge(a1)(a2)
		\Edge(a1)(a5)
		\Edge(a2)(a4)
		\Edge(a2)(a3)
		\Edge(a3)(a5)
		\Edge(a4)(a5)
		\Edge(a6)(a5)
		\Edge(a6)(a2);
		\draw[-{Latex}, ultra thick] (4,0) -- (8,0);
	\end{tikzpicture}
	\caption{Construction of $\mathfrak{L}_k$ from a $2$-regular graph on $4$ vertices for $k=2$.\label{fig:construction_L_from_C4}}
\end{figure}
Evidently, the graph $\mathfrak{L}_k$ captures all properties of $L$, i.e, we can reconstruct $L$ from $\mathfrak{L}_k$ up to isomorphisms. To this end, recall that each vertex can be interpreted as a vertex induced sub-graph of $L$ and all edges incident to a specific vertex in $\mathfrak{L}_k$ corresponds to an edge which points outward of the induced sub-graph. Iterating through all vertices in $\mathfrak{L}_k$ we can reconstruct all neighborhood relationships in $L$ and, therefore, $L$ up to isomorphisms. 
\section{General Properties of kPG}
	We consider exclusively connected graphs $L$ throughout this work, and stay in the beginning in an unconstrained setting with respect to the degree sequence of $L$. Later on we will reduce our scope to regular graphs to obtain deeper quantitative insights. It turns out that $L$ being regular is not a strong assumption when it comes to many applications for which the graph $\mathfrak{L}_k$ was initially conceptualized. Nonetheless, some important ones may be proven nonetheless and we present them in what follows.
\subsection{Topological properties of $\mathfrak{L}_k$}
 We start with the connectivity of $\mathfrak{L}_k$ which it inherits from $L$.
\begin{theorem}\label{thm:L_frak_k_connected}
	Let $L=(V,E)$ be any connected simple graph, $\bar{n}:=|V|$ and $k\in\{1,\hdots,\bar{n}-1\}$. Then, the associated graph  $\mathfrak{L}_k=(\mathfrak{V}_k,\mathfrak{E}_k)$ is connected.
\end{theorem}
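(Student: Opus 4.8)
The plan is to first reduce to the case where $L$ is a tree, and then to induct on the number of vertices, peeling off one vertex (a leaf) together with one token at each stage. Since $L$ is connected it contains a spanning tree $T=(V,E_T)$ with $E_T\subseteq E$. Every edge of the $k$-particle graph $\mathfrak{T}_k$ on $T$ is also an edge of $\mathfrak{L}_k$, because both are edges exactly when the symmetric difference of the two configurations is a pair $\{v,w\}$ with $\langle v,w\rangle$ an edge of the underlying graph, and $E_T\subseteq E$. Hence $\mathfrak{T}_k$ is a spanning subgraph of $\mathfrak{L}_k$ on the same vertex set $\mathfrak{V}_k$, and it suffices to show $\mathfrak{T}_k$ is connected: any connecting path in the sparser graph is a fortiori a path in $\mathfrak{L}_k$. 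So from now on I would assume $L=T$ is a tree.

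The induction is on $\bar n=|V|$. The base case $k=1$ is immediate, since $\{v\}\mapsto v$ is a graph isomorphism $\mathfrak{L}_1\cong L$ (because $\{v\}\triangle\{w\}=\{v,w\}$), and $L$ is connected. For the inductive step I would fix a leaf $\ell$ of $T$ with unique neighbour $p$, and prove the key \emph{sliding lemma}: every configuration $\mathfrak{v}\in\mathfrak{V}_k$ is connected in $\mathfrak{T}_k$ to a configuration containing $\ell$. When $\ell\notin\mathfrak{v}$, choose an occupied vertex $t\in\mathfrak{v}$ of minimal tree-distance to $\ell$; along the unique $t$--$\ell$ path in $T$ every interior vertex must be empty, for otherwise it would be a token strictly closer to $\ell$, contradicting minimality. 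One can therefore slide the token at $t$ one edge at a time all the way to $\ell$, each step being a legal edge of $\mathfrak{T}_k$ since its target is empty. The hypothesis $1\le k\le\bar n-1$ guarantees that at least one token and at least one empty vertex always exist, so this procedure is well defined.

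To close the induction, apply the sliding lemma to both endpoints, reducing to the task of connecting two configurations $\mathfrak{v}',\mathfrak{w}'$ that both contain $\ell$. Writing $\mathfrak{v}'=\{\ell\}\cup\mathfrak{v}''$ and $\mathfrak{w}'=\{\ell\}\cup\mathfrak{w}''$, the sets $\mathfrak{v}'',\mathfrak{w}''$ are $(k-1)$-subsets of $V\setminus\{\ell\}=V(T')$, where $T'=T-\ell$ is a tree on $\bar n-1$ vertices. Any token move within $T'$ that keeps $\ell$ occupied is simultaneously a legal edge of the $(k-1)$-particle graph $\mathfrak{T}'_{k-1}$ on $T'$ and of $\mathfrak{T}_k$; hence a connecting path in $\mathfrak{T}'_{k-1}$ lifts verbatim to one in $\mathfrak{T}_k$ by adjoining $\ell$ to every configuration. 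If $k\ge 2$ then $k-1\in\{1,\ldots,\bar n-2\}$ and the induction hypothesis applied to $(T',k-1)$ supplies such a path, while if $k=1$ then $\mathfrak{v}''=\mathfrak{w}''=\emptyset$ and $\mathfrak{v}'=\mathfrak{w}'=\{\ell\}$, leaving nothing to prove.

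The step I expect to be the main obstacle is the exclusion constraint itself: one cannot simply route each particle to its target along an arbitrary path of $L$, because intermediate vertices may be occupied, so the naive transport strategy fails. The sliding lemma is precisely what circumvents this, by only ever moving the token whose path to the leaf is \emph{unobstructed}; this is what legitimises the single-leaf reduction. The remaining difficulty is purely the bookkeeping of index ranges, namely the $k=1$ base case and the bound $k\le\bar n-1$ ensuring no configuration is ever full. If desired, the endpoint analysis can be shortened by invoking the complementation isomorphism $\mathfrak{v}\mapsto V\setminus\mathfrak{v}$, which identifies $\mathfrak{L}_k$ with $\mathfrak{L}_{\bar n-k}$ (symmetric difference being invariant under complementing both sets) and allows one to assume $k\le\bar n/2$ throughout.
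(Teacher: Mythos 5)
Your proof is correct, and it takes a genuinely different route from the paper. The paper argues directly on $L$: it concatenates shortest paths $\phi_{v_i,w_i}$ for each particle, tracks the resulting intermediate configurations as \emph{multi-sets} (so that two particles may temporarily occupy the same vertex), and then repairs the move sequence by locating the first collision and reordering the maps until every intermediate configuration is an honest $k$-subset. You instead pass to a spanning tree $T$, observe that $\mathfrak{T}_k$ is a spanning subgraph of $\mathfrak{L}_k$, and induct on $\bar n$ by peeling off a leaf $\ell$: your sliding lemma routes one token to $\ell$ along an unobstructed path (guaranteed unobstructed by choosing the occupied vertex nearest to $\ell$), after which the problem drops to $(T-\ell,\,k-1)$ and the connecting path lifts verbatim by adjoining $\ell$. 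What your approach buys is that no collision ever occurs, so there is nothing to repair and no multi-set bookkeeping; the collision-avoidance is structural (minimality of the chosen token) rather than corrective, and the inductive skeleton is easy to verify, including the index ranges $k-1\in\{1,\ldots,\bar n-2\}$. What the paper's approach buys is that it works directly with arbitrary (not necessarily tree) paths in $L$ and in principle yields more explicit walks between two prescribed configurations, at the cost of a reordering procedure whose termination and correctness require more care than your argument does. Both establish the theorem; yours is the more self-contained and checkable of the two.
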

Having shown that $\mathfrak{L}_k$ is connected if and only if $L$ is connected, we move on to bipartite graphs $L$ which do render $\mathfrak{L}_k$ bipartite. We show the response to this question in Theorem \ref{thm:L_bipartite_eq_L_k_bipartite}
\begin{theorem}\label{thm:L_bipartite_eq_L_k_bipartite}
	The graph $L$ is bipartite if and only if $\mathfrak{L}_k$ is bipartite.	
\end{theorem}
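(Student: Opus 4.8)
The plan is to prove the two implications separately: the forward one by exhibiting an explicit proper $2$-colouring of $\mathfrak{L}_k$, and the converse by producing an odd closed walk in $\mathfrak{L}_k$ whenever $L$ carries an odd cycle. For the forward implication I would fix a bipartition $V=A\sqcup B$ and colour a configuration $\mathfrak{v}\in\mathfrak{V}_k$ by $\chi(\mathfrak{v}):=|\mathfrak{v}\cap A|\bmod 2$. If $\langle\mathfrak{v},\mathfrak{w}\rangle\in\mathfrak{E}_k$ then $\mathfrak{v}\triangle\mathfrak{w}=\{v,w\}$ for some $\langle v,w\rangle\in E$; since $L$ is bipartite, exactly one of $v,w$ lies in $A$, so passing from $\mathfrak{v}$ to $\mathfrak{w}$ (which swaps $v$ for $w$) changes $|\mathfrak{v}\cap A|$ by exactly $\pm 1$ and hence flips $\chi$. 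Thus adjacent vertices get different colours and $\mathfrak{L}_k$ is bipartite.

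For the converse I would argue contrapositively. If $L$ is not bipartite then, being connected, it contains an odd cycle $C=(c_0,c_1,\dots,c_{q-1})$ with $q=|C|$ odd. The construction keeps $p:=k-j$ particles parked on a fixed $p$-subset $F\subseteq V\setminus C$ and lets $j$ particles circulate around $C$; each elementary step slides a circulating particle from $c_i$ to $c_{i\pm 1}$, which is a genuine edge of $\mathfrak{L}_k$ because $\langle c_i,c_{i\pm 1}\rangle\in E$, while $F$ is never disturbed. Hence any closed walk among the $j$-subsets of $C$ in the $j$-particle graph $\mathfrak{C}_j$ of $C$ (Definition~\ref{def:mathfrak_L_k} applied to $C$) lifts, by appending $F$, to a closed walk of the same length in $\mathfrak{L}_k$. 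Identifying $c_i$ with $i\in\mathbb{Z}/q\mathbb{Z}$, the quantity $\sum_{i\in\sigma}i\bmod q$ increases by $1$ on a forward slide and decreases by $1$ on a backward slide, so along a closed walk the number of forward minus backward slides is a multiple $tq$ of $q$; as $q$ is odd the walk length is $\equiv t\pmod 2$. Rotating a block of $j$ consecutive particles once around $C$ (possible whenever $j\le q-1$) uses $jq$ forward slides and returns to the start, giving a closed walk of odd length exactly when $j$ is odd.

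The step I expect to be the main obstacle is guaranteeing oddness uniformly in $k$, since the naive walk only works for small odd $j$. I would remove the parity restriction on $j$ using the complement map $\sigma\mapsto C\setminus\sigma$, which preserves symmetric differences and so yields an isomorphism $\mathfrak{C}_j\cong\mathfrak{C}_{q-j}$: because $q$ is odd, one of $j,q-j$ is odd, and the isomorphism transports the odd closed walk across without altering its length, so $\mathfrak{C}_j$ is non-bipartite for every $1\le j\le q-1$. It then remains to choose a feasible pair $(j,p)$ — the delicate point being that when $k>q$ not all particles fit on $C$, and when $C$ exhausts $V$ there is nowhere to park. I would take $j=k,\ p=0$ if $k\le q-1$, and $j=q-1,\ p=k-q+1$ if $k\ge q$; in the latter case $p\le\bar n-q$ holds precisely because $k\le\bar n-1$, so $F$ fits inside $V\setminus C$. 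In either case $\mathfrak{C}_j$ contains an odd closed walk that lifts to $\mathfrak{L}_k$, so $\mathfrak{L}_k$ is not bipartite, completing the contrapositive.
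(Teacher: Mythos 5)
Your proof is correct, but it takes a genuinely different route from the paper's on both implications. For the forward direction the paper argues by contradiction: it takes an odd cycle in $\mathfrak{L}_k$, collects the multiset of $L$-edges it uses, decomposes these into closed particle trajectories in $L$, and derives a parity contradiction from the fact that closed walks in a bipartite graph have even length. Your explicit $2$-colouring $\chi(\mathfrak{v})=|\mathfrak{v}\cap A|\bmod 2$ establishes the same implication directly and sidesteps the somewhat delicate claim that the edge multiset of the cycle really decomposes into closed trajectories. For the converse the paper proceeds in two stages: Lemma \ref{lem:cycle_length_in_L_if_bipartite_L_k} handles short odd cycles (those leaving room to park the other $k-1$ particles off the cycle) by circulating a single particle, and Proposition \ref{prop:L_k_bipartite_L_bipartite} handles the remaining case $|\phi|\geq\bar n-k+1\geq k$ (for $k\leq\bar n/2$) by rotating all $k$ particles simultaneously around the odd cycle, using multisets and a reordering device to resolve collisions, finishing with $\mathfrak{L}_k\stackrel{\sim}{=}\mathfrak{L}_{\bar n-k}$. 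You instead circulate a block of $j$ particles confined to the cycle while parking the remaining $k-j$ elsewhere, obtaining a closed walk of length $jq$, and repair the parity when $j$ is even via the local complement isomorphism $\mathfrak{C}_j\cong\mathfrak{C}_{q-j}$ --- the same symmetric-difference symmetry the paper exploits globally in Theorem \ref{thm:quotient_graph_isomorphism}. Your case split ($j=k$ if $k\leq q-1$, else $j=q-1$ with $p=k-q+1\leq\bar n-q$) covers all $k\in\{1,\hdots,\bar n-1\}$ uniformly and dispenses with the multiset and de-collision machinery; the paper's version, in exchange, yields the quantitative by-product that bipartiteness of $\mathfrak{L}_k$ forces the odd girth of $L$ to exceed $\bar n-k+1$.
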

Next, we find that there is a natural link between $\mathfrak{L}_k$ and $\mathfrak{L}_k^c$ and the Johnson graph the natural overarching structure for the analysis of the graph $\mathfrak{L}_k$. In fact, the symmetric difference, which we used to define edges in $\mathfrak{L}_k$, allows for an even deeper identification, now for the graphs $\mathfrak{L}_k$ and $\mathfrak{L}_{\bar{n}-k}$ being practically identical as shall be shown in the following Theorem \ref{thm:quotient_graph_isomorphism}.
\begin{theorem}\label{thm:quotient_graph_isomorphism}
	Let $L$ be a simple connected graph on $\bar{n}$ vertices and $k\in\{1,\hdots,\bar{n}-1\}$. Denote by $\mathfrak{L}_k^c$ the $k$-particle graph induced by $L^c$. Then, the graphs $\mathfrak{L}_k^c$ and $(\mathfrak{L}_k)^c$ satisfy
	\begin{equation}
		\mathfrak{L}_k^c\stackrel{\sim}{=}(\mathfrak{L}_k)^c
	\end{equation}
	where the complement on the right hand side is taken with respect to the Johnson graph $J(\bar{n},k)$. Furthermore, the graph $\mathfrak{L}_k$ satisfies $\mathfrak{L}_k\stackrel{\sim}{=} \mathfrak{L}_{\bar{n}-k}$.
\end{theorem}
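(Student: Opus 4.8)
The plan is to treat the two assertions separately, since each reduces to identifying the vertex sets of the graphs in question and then comparing their edge sets directly; no spectral or counting machinery is needed. For the first assertion I would exhibit the identity map on $k$-subsets as the isomorphism, and for the second I would use set-complementation $\mathfrak{v}\mapsto V\setminus\mathfrak{v}$. In both cases the underlying principle is that adjacency in $\mathfrak{L}_k$ is governed entirely by the two-element symmetric difference $\mathfrak{v}\triangle\mathfrak{w}$ and whether it is an edge of the defining graph.

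For $\mathfrak{L}_k^c \stackrel{\sim}{=}(\mathfrak{L}_k)^c$, first observe that the graphs $\mathfrak{L}_k$, $\mathfrak{L}_k^c$ and $(\mathfrak{L}_k)^c$ all share the common vertex set $\mathfrak{V}_k$, the family of $k$-subsets of $V$, and that the full edge set of $\mathcal{J}(\bar{n},k)$ consists of exactly those pairs $\langle\mathfrak{v},\mathfrak{w}\rangle$ with $|\mathfrak{v}\triangle\mathfrak{w}|=2$, say $\mathfrak{v}\triangle\mathfrak{w}=\{v,w\}$. The key step is to note that, since $v\neq w$, the pair $\{v,w\}$ lies in exactly one of $E$ and the edge set $E(L^c)$, because these two sets partition the edge set of the complete graph on $V$. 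Consequently each edge of $\mathcal{J}(\bar{n},k)$ belongs either to $\mathfrak{E}_k$ (when $\langle v,w\rangle\in E$) or to the edge set of $\mathfrak{L}_k^c$ (when $\langle v,w\rangle\in E(L^c)$), and these two possibilities are mutually exclusive and exhaustive. Hence the edges of $\mathcal{J}(\bar{n},k)$ that are not in $\mathfrak{L}_k$, which by definition are the edges of $(\mathfrak{L}_k)^c$, are precisely the edges of $\mathfrak{L}_k^c$; the two graphs coincide and the identity map furnishes the claimed isomorphism.

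For $\mathfrak{L}_k \stackrel{\sim}{=} \mathfrak{L}_{\bar{n}-k}$, I would define $\phi:\mathfrak{V}_k\to\mathfrak{V}_{\bar{n}-k}$ by $\phi(\mathfrak{v})=V\setminus\mathfrak{v}$, which is a bijection between the $k$-subsets and the $(\bar{n}-k)$-subsets. The one computation that matters is the set-theoretic identity
\begin{equation}
  (V\setminus\mathfrak{v})\triangle(V\setminus\mathfrak{w})=\mathfrak{v}\triangle\mathfrak{w},
\end{equation}
which holds because an element lies in exactly one of the complements if and only if it lies in exactly one of $\mathfrak{v}$ and $\mathfrak{w}$. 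Given this, $\langle\mathfrak{v},\mathfrak{w}\rangle\in\mathfrak{E}_k$ if and only if $\mathfrak{v}\triangle\mathfrak{w}=\{v,w\}$ with $\langle v,w\rangle\in E$, if and only if $\phi(\mathfrak{v})\triangle\phi(\mathfrak{w})=\{v,w\}$ with $\langle v,w\rangle\in E$, i.e. $\langle\phi(\mathfrak{v}),\phi(\mathfrak{w})\rangle\in\mathfrak{E}_{\bar{n}-k}$; thus $\phi$ both preserves and reflects adjacency and is the desired isomorphism.

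Neither part presents a serious obstacle; the proof is essentially a matter of unwinding the definitions carefully. The only point demanding attention is bookkeeping about which complement is meant: the complement on the right-hand side of the first assertion must be taken inside $\mathcal{J}(\bar{n},k)$ rather than inside the complete graph on $\mathfrak{V}_k$, since otherwise pairs with $|\mathfrak{v}\triangle\mathfrak{w}|>2$ would spuriously enter the edge set and the identification with $\mathfrak{L}_k^c$ would fail. Making this restriction explicit, together with the observation that $E$ and $E(L^c)$ partition the edges of the complete graph on $V$, is what makes the first statement work.
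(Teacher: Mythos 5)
Your proposal is correct and follows essentially the same route as the paper: the identity map on $\mathfrak{V}_k$ together with the dichotomy $\langle v,w\rangle\in E$ versus $\langle v,w\rangle\in E^c$ for the first claim, and the complementation map $\mathfrak{v}\mapsto V\setminus\mathfrak{v}$ with the invariance of the symmetric difference for the second. The only cosmetic difference is that you derive the edge dichotomy directly from the partition of the complete graph's edges, whereas the paper additionally cites its degree-sum lemma for the same fact; your version is, if anything, slightly more self-contained.
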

These observations on the topological properties of the $k$ particle graph form the basis for the following quantitative results, which we obtain via combinatorial considerations. 
\subsection{Geometric structure of $\mathfrak{L}_k$}
The $k$ particle graph $\mathfrak{L}_k$ carries all symmetries which are also present in $L$ and $L$ is the defining object for the local and global structure of $\mathfrak{L}_k$. We can capture the implied sub-graph structures in $\mathfrak{L}_k$ combinatorically, in particular, when considering cliques in $L$. 
\begin{theorem}
	Let $L$ a simple connected graph on $\bar{n}$ vertices, $k\in\{1,\hdots,\bar{n}-1\}$ and $c\in\mathbb{N}$. Then $L$ contains a clique of size $c$ if and only if $\mathfrak{L}_k$ contains a Johnson graph $\mathcal{J}(c,k')$ for all
	\begin{equation*}
		k'\in\{\max\{1,k-(\bar{n}-c)\},\hdots,\min\{k,c-1\}\}.
	\end{equation*}
	Additionally, with $\Delta_c^L$ denoting the number of cliques of size $c$ in $L$, the number $\iota_c^{k'}$ of Johnson graphs $\mathcal{J}(c,k')$ in $\mathfrak{L}_k$ is given by
	\begin{equation}
		\iota_c^{k'} = \binom{\bar{n}-c}{k-k'}\Delta_c^L.
	\end{equation}
\end{theorem}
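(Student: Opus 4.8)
The plan is to prove both implications and the count at once, by setting up a canonical bijection between the copies of $\mathcal{J}(c,k')$ sitting inside $\mathfrak{L}_k$ and the pairs $(C,A)$, where $C$ is a $c$-clique of $L$ and $A\subseteq V\setminus C$ is a set of ``spectator'' particles with $|A|=k-k'$.

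\emph{Construction and the implication ``clique $\Rightarrow$ subgraph''.} First I would fix a clique $C\subseteq V$ with $|C|=c$ and a set $A\subseteq V\setminus C$ with $|A|=k-k'$, and consider the vertex set $S_{C,A}=\{A\cup B:\,B\subseteq C,\ |B|=k'\}\subseteq\mathfrak{V}_k$. The assignment $B\mapsto A\cup B$ is a bijection onto $S_{C,A}$, and since $A$ lies in both sets one has $(A\cup B)\triangle(A\cup B')=B\triangle B'$. Hence $A\cup B$ and $A\cup B'$ are adjacent in $\mathfrak{L}_k$ iff $|B\triangle B'|=2$ and the swapped pair lies in $E$; but that pair is contained in $C$, and because $C$ is a clique it is automatically an edge of $L$. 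Thus adjacency in $\mathfrak{L}_k[S_{C,A}]$ is exactly adjacency of $k'$-subsets of $C$, i.e. $\mathfrak{L}_k[S_{C,A}]\stackrel{\sim}{=}\mathcal{J}(c,k')$. The admissible range of $k'$ is precisely the nonemptiness range of this construction: $k-k'\ge 0$ and $k-k'\le\bar{n}-c$ (room for $A$ outside $C$) give $k'\le k$ and $k'\ge\max\{1,k-(\bar{n}-c)\}$, while $1\le k'\le c-1$ keeps the resulting Johnson graph nontrivial, whence the bound $k'\le\min\{k,c-1\}$. This establishes one implication of the equivalence for every admissible $k'$.

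\emph{Injectivity and the count.} Next I would check that $(C,A)\mapsto S_{C,A}$ is injective, so that the canonical copies are enumerated by $\binom{\bar{n}-c}{k-k'}\Delta_c^L$. For $1\le k'\le c-1$ the background is recovered as the common intersection $A=\bigcap_{\mathfrak{v}\in S_{C,A}}\mathfrak{v}$ (each element of $C$ is omitted by some $k'$-subset, hence survives no intersection), and then $C=\big(\bigcup_{\mathfrak{v}\in S_{C,A}}\mathfrak{v}\big)\setminus A$. Multiplying the $\Delta_c^L$ choices of $C$ by the $\binom{\bar{n}-c}{k-k'}$ choices of $A$ yields the stated value of $\iota_c^{k'}$, \emph{provided} these canonical copies exhaust all copies.

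\emph{Completeness and the reverse implication (the crux).} The hard part is the converse: that every copy of $\mathcal{J}(c,k')$ inside $\mathfrak{L}_k$ equals some $S_{C,A}$, which simultaneously gives the implication ``subgraph $\Rightarrow$ clique'' and the exactness of $\iota_c^{k'}$. The tool I would use is the symmetric-difference labelling $\ell(\langle\mathfrak{v},\mathfrak{w}\rangle)=\mathfrak{v}\triangle\mathfrak{w}\in E$ on the edges of $\mathfrak{L}_k$, which is essentially intrinsic in a Johnson graph: opposite edges of a genuine square (an induced $4$-cycle produced by two independent swaps) carry the same swapped pair, and the three edges of a triangle carry the three pairs among a common triple of vertices. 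Transporting these local patterns through an isomorphism $\phi:\mathcal{J}(c,k')\to\mathfrak{L}_k[S]$, and using the connectivity and transitivity of $\mathcal{J}(c,k')$, one would glue the triangle data into $c$ distinguished vertices $u_1,\dots,u_c\in V$ for which the label of the $\{i,j\}$-class is $\{u_i,u_j\}$; as every such pair is a genuine edge of $L$, the set $C=\{u_1,\dots,u_c\}$ is a $c$-clique, and invariance of $\mathfrak{v}\setminus C$ along edges then forces $S=S_{C,A}$ with $A=\bigcap_{\mathfrak{v}\in S}\mathfrak{v}$. I expect the real obstacle to sit exactly here, because $\mathfrak{L}_k$ is \emph{not} an induced subgraph of $\mathcal{J}(\bar{n},k)$: an induced $4$-cycle of the copy need not be a true square of $\mathcal{J}(\bar{n},k)$, since its diagonals may be Johnson-edges whose $L$-labels are merely absent, and such ``spurious'' $4$-cycles could corrupt the label-recovery. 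I would neutralise this by showing that a spurious configuration propagates through the star/triangle neighbourhood structure of the embedded $\mathcal{J}(c,k')$ to a contradiction with the adjacency pattern a Johnson graph imposes, thereby forcing every abstract distance-$2$ pair to map to a pair at genuine Johnson-distance $2$. Finally, the abstract coincidence $\mathcal{J}(c,k')\stackrel{\sim}{=}\mathcal{J}(c,c-k')$ does not disturb the count, since a concrete copy fixes its own parameter through $k'=k-|A|$, so copies of parameter $k'$ are never conflated with those of parameter $c-k'$.
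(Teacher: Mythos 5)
The paper states this theorem without proof, so there is nothing to compare your argument against; I can only assess it on its own terms. Your forward direction and your enumeration of the canonical copies are correct and essentially complete: the sets $S_{C,A}=\{A\cup B:\,B\subseteq C,\ |B|=k'\}$ do induce copies of $\mathcal{J}(c,k')$ because $(A\cup B)\triangle(A\cup B')=B\triangle B'$ lands inside the clique $C$, the stated range of $k'$ is exactly the nonemptiness/nontriviality range of the construction, and the recovery $A=\bigcap_{\mathfrak{v}\in S_{C,A}}\mathfrak{v}$, $C=\bigl(\bigcup_{\mathfrak{v}\in S_{C,A}}\mathfrak{v}\bigr)\setminus A$ gives injectivity of $(C,A)\mapsto S_{C,A}$ and hence the lower bound $\iota_c^{k'}\geq\binom{\bar{n}-c}{k-k'}\Delta_c^L$. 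Your closing remark that the count must be read as indexed by the parameter $k'$ rather than by isomorphism type (because of $\mathcal{J}(c,k')\stackrel{\sim}{=}\mathcal{J}(c,c-k')$) is also the right reading; it is the only one compatible with Theorem \ref{thm:cliques_in_Lk}, which sums the $k'=1$ and $k'=c-1$ contributions when counting cliques.

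The genuine gap is that the entire ``only if'' direction and the exactness of the count rest on the completeness claim --- every copy of $\mathcal{J}(c,k')$ in $\mathfrak{L}_k$ equals some $S_{C,A}$ --- and you do not prove it; you describe a label-transport strategy and then explicitly leave its main obstacle unresolved. That obstacle is real: $\mathfrak{L}_k$ is a spanning but not induced subgraph of $\mathcal{J}(\bar{n},k)$, so an induced $4$-cycle of an embedded copy need not be a genuine Johnson square, and the identification of opposite-edge labels (the step on which your recovery of $u_1,\hdots,u_c$ depends) can fail a priori. Note also that the converse cannot be salvaged by a cheaper route through clique containment alone: the largest cliques inside $\mathcal{J}(c,k')$ have size $\max\{c-k'+1,\,k'+1\}$, and when the admissible range excludes both $k'=1$ and $k'=c-1$ (e.g.\ $\bar{n}=10$, $c=8$, $k=4$, where $k'$ ranges over $\{2,3,4\}$) none of the guaranteed cliques reaches size $c$, so the full structural recovery you sketch is genuinely load-bearing. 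Until that step is carried out --- for instance by first classifying triangles of $\mathfrak{L}_k$ (which, as sub-triangles of $\mathcal{J}(\bar{n},k)$, are of star or triangle type and hence force edges of $L$ among the swapped elements) and then propagating this along the copy --- the proof establishes only one implication and a lower bound on $\iota_c^{k'}$.
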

Using the convention that $\binom{\bar{n}}{z}=0$ for all $z\in\mathbb{Z}^-$ we obtain the following result on the clique structure of $\mathfrak{L}_k$.
\begin{theorem}\label{thm:cliques_in_Lk}
	Let $L$ a simple connected graph on $\bar{n}$ vertices, $k\in\{1,\hdots,\bar{n}-1\}$ and $c\in\mathbb{N}$. Denote by $\Delta_c^L$ the number of cliques of size $c$ in $L$, the number $\Delta_c^{\mathfrak{L}_k}$ of cliques of size $c$ in $\mathfrak{L}_k$ is given by
	\begin{equation}
		\Delta_c^{\mathfrak{L}_k} = \left(\binom{\bar{n}-c}{k-1}+\binom{\bar{n}-c}{k-(c-1)}\right)\Delta_c^L.
	\end{equation}
\end{theorem}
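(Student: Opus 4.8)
The plan is to exploit that $\mathfrak{L}_k$ is a spanning subgraph of the Johnson graph $\mathcal{J}(\bar n,k)$, so that every clique of $\mathfrak{L}_k$ is in particular a clique of $\mathcal{J}(\bar n,k)$, and then to impose the extra edge condition inherited from $L$. The backbone is the dichotomy for cliques in a Johnson graph: any set of pairwise adjacent $k$-subsets of size $c\ge 3$ is of exactly one of two kinds, either a \emph{bottom} $\{S\cup\{x_1\},\dots,S\cup\{x_c\}\}$ sharing a common $(k-1)$-core $S$ with distinct $x_i\notin S$, or a \emph{top} $\{T\setminus\{y_1\},\dots,T\setminus\{y_c\}\}$ contained in a common $(k+1)$-hull $T$ with distinct $y_i\in T$. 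First I would establish this dichotomy: for a triangle $\{\mathfrak{v}_1,\mathfrak{v}_2,\mathfrak{v}_3\}$, a short case analysis on where the third set differs from $\mathfrak{v}_1\cap\mathfrak{v}_2$ shows it is a top or a bottom; then I would promote this to arbitrary $c\ge 3$ by showing that once $\{\mathfrak{v}_1,\mathfrak{v}_2,\mathfrak{v}_3\}$ is, say, a bottom with core $S$, every further $\mathfrak{v}_i$ must also contain $S$, since otherwise some pair acquires symmetric difference of size $4$ and the clique breaks.

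The second step is to read off the $\mathfrak{L}_k$-condition. For a bottom we have $\mathfrak{v}_i\triangle\mathfrak{v}_j=\{x_i,x_j\}$, so the clique lies in $\mathfrak{L}_k$ if and only if $\langle x_i,x_j\rangle\in E$ for all $i\ne j$, i.e. if and only if $K:=\{x_1,\dots,x_c\}$ is a clique of size $c$ in $L$; the core $S$ may then be any $(k-1)$-subset of $V\setminus K$. Recovering $S=\bigcap_i\mathfrak{v}_i$ and $K=\bigl(\bigcup_i\mathfrak{v}_i\bigr)\setminus S$ from the clique itself shows this correspondence is a bijection, so the number of bottom $c$-cliques is $\binom{\bar n-c}{k-1}\Delta_c^L$. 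This is exactly the count $\iota_c^{1}$ of the preceding theorem, since a bottom is precisely a copy of the complete graph $\mathcal{J}(c,1)=K_c$ obtained by letting a single particle roam over $K$ while the remaining $k-1$ particles rest on $S$.

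Symmetrically, for a top $\mathfrak{v}_i\triangle\mathfrak{v}_j=\{y_i,y_j\}$, so the clique lies in $\mathfrak{L}_k$ if and only if $K:=\{y_1,\dots,y_c\}$ is a $c$-clique of $L$, and the hull is $T=K\cup R$ for an arbitrary $\bigl((k+1)-c\bigr)$-subset $R$ of $V\setminus K$; this yields $\binom{\bar n-c}{k-(c-1)}\Delta_c^L$ top $c$-cliques, matching $\iota_c^{c-1}$ via the identification $\mathcal{J}(c,c-1)=K_c$. For $c\ge 3$ the two families are disjoint, because a bottom has $\lvert\bigcap_i\mathfrak{v}_i\rvert=k-1$ whereas a top has $\lvert\bigcap_i\mathfrak{v}_i\rvert=k-c+1<k-1$; summing the two counts produces the claimed formula, and the stated convention that binomial coefficients with negative lower index vanish automatically suppresses the top term when $c>k+1$ and the bottom term when $c>\bar n-k+1$, in agreement with the geometric constraints that a top fits in a $(k+1)$-set and a bottom sits above a $(k-1)$-set.

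The main obstacle is the dichotomy of the first step, and within it the ``no type switching'' argument forcing an entire clique to be homogeneously a top or a bottom; everything afterwards is bookkeeping through the bijections. A secondary point worth flagging is that the clean split genuinely requires $c\ge 3$: when $c=2$ a single edge is simultaneously a bottom and a top, its $(k-1)$-core and $(k+1)$-hull describing the same pair, so the two terms coincide and naively adding them double-counts; the displayed identity should therefore be understood for $c\ge 3$, with $c\in\{1,2\}$ handled directly.
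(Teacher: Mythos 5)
Your proof is correct for $c\ge 3$, and it supplies substantially more than the paper does: the paper gives no proof of this theorem at all, presenting it as an immediate consequence of the preceding statement that $\mathfrak{L}_k$ contains $\iota_c^{k'}=\binom{\bar n-c}{k-k'}\Delta_c^L$ copies of $\mathcal{J}(c,k')$, specialized to the two values $k'=1$ and $k'=c-1$ for which $\mathcal{J}(c,k')$ is complete. Your ``bottoms'' and ``tops'' are exactly those two families, and you explicitly note the identification. What your argument adds, and what the paper silently assumes, is the converse direction: that \emph{every} $c$-clique of $\mathfrak{L}_k$ with $c\ge 3$ arises this way. That is precisely the clique dichotomy in the Johnson graph (triangle case analysis plus the ``no type switching'' step, which works because mixing a top vertex into a bottom triangle forces a symmetric difference of size $4$), together with the disjointness of the two families via $\lvert\bigcap_i\mathfrak{v}_i\rvert$. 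Your counting of cores $S$ and hull-complements $R$ as subsets of $V\setminus K$ is also correct and matches the binomial coefficients in the statement. Finally, your caveat about $c=2$ is not merely a technicality but a genuine correction to the paper: a single edge is simultaneously a bottom and a top, so the displayed formula gives $2\binom{\bar n-2}{k-1}\Delta_2^L$ while the true edge count of $\mathfrak{L}_k$ is $\binom{\bar n-2}{k-1}\lvert E\rvert$ (consistent with Corollary \ref{cor:size_edge_set_E_k_comp_size_E}); the theorem as stated is therefore off by a factor of two at $c=2$ and should be restricted to $c\ge 3$, exactly as you say.
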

The identity implies directly, that there are no large cliques in $\mathfrak{L}_k$. All clique sizes in $L$ imply at most one clique size in $\mathfrak{L}_k$. In the special case of a regular graph $L$, further claims can be made.
\begin{corollary}
	If $L$ is a $\bar{d}$ regular graph, but not complete, which contains a clique of size $c$, then $\mathfrak{L}_k$ contains a clique of size $c$. 
\end{corollary}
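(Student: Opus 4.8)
The plan is to turn the existence question into a positivity statement about the exact clique count of Theorem~\ref{thm:cliques_in_Lk}. Since $L$ contains a clique of size $c$ we have $\Delta_c^L\geq 1$, so by that theorem it is enough to prove
\begin{equation*}
	\binom{\bar{n}-c}{k-1}+\binom{\bar{n}-c}{k-(c-1)}>0 .
\end{equation*}
Adopting the convention $\binom{\bar{n}-c}{z}=0$ for $z<0$ or $z>\bar{n}-c$, the first summand is positive exactly when $k\leq \bar{n}-c+1$ and the second exactly when $k\geq c-1$, all other defining inequalities being automatic for $k\in\{1,\dots,\bar{n}-1\}$. Thus everything reduces to showing that each admissible $k$ lies in $\{1,\dots,\bar{n}-c+1\}\cup\{c-1,\dots,\bar{n}-1\}$.

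These two ranges exhaust $\{1,\dots,\bar{n}-1\}$ if and only if $c-1\leq(\bar{n}-c+1)+1$, i.e. $c\leq(\bar{n}+3)/2$. Hence the corollary follows once I control the clique number of $L$, and the key claim I would isolate is: \emph{a $\bar{d}$-regular, non-complete simple graph on $\bar{n}$ vertices contains no clique larger than $\bar{n}/2$.} This bound comfortably implies $c\leq(\bar{n}+3)/2$, and I expect its proof to be the only genuine obstacle; the rest is binomial bookkeeping.

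To prove the bound I would pass to the complement $L^{c}$, which is $r$-regular with $r=\bar{n}-1-\bar{d}$, and note $r\geq 1$ precisely because $L$ is regular but not complete. A clique $S$ of size $c$ in $L$ is an independent set in $L^{c}$, so all $r$ edges leaving each vertex of $S$ land in $T:=V\setminus S$, giving exactly $c\,r$ edges between $S$ and $T$ in $L^{c}$. On the other hand the number of $S$--$T$ edges is at most the total $L^{c}$-degree $r(\bar{n}-c)$ of the $\bar{n}-c$ vertices of $T$. Comparing yields $c\,r\leq(\bar{n}-c)\,r$, and dividing by $r\geq 1$ gives $c\leq\bar{n}/2$.

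Finally I would reassemble: for any $k\in\{1,\dots,\bar{n}-1\}$ either $k\leq\bar{n}-c+1$, so the first binomial is positive, or $k\geq\bar{n}-c+2\geq c-1$ (this last step is exactly $c\leq(\bar{n}+3)/2$, which we have) and the second binomial is positive. Either way the bracketed factor of Theorem~\ref{thm:cliques_in_Lk} is strictly positive, so $\Delta_c^{\mathfrak{L}_k}>0$ and $\mathfrak{L}_k$ contains a clique of size $c$. The only points needing care are the degenerate binomial endpoints and the case $r=0$, which is precisely the excluded complete graph.
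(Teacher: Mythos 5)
Your proposal is correct and follows essentially the same route as the paper: reduce the claim to positivity of the binomial factor in Theorem~\ref{thm:cliques_in_Lk} and rule out the failure case by bounding the clique number of a regular, non-complete graph. In fact your write-up is tighter than the paper's: you track the binomial positivity conditions exactly (arriving at the sufficient condition $c\leq(\bar{n}+3)/2$ rather than the paper's slightly loose strict inequalities), and you actually prove the needed clique bound $c\leq\bar{n}/2$ by double-counting the edges between the clique and its complement in $L^{c}$ --- a step the paper's proof merely asserts when it claims a contradiction from ``the clique has size greater than half of the vertex set.''
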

\begin{proof}
	By Theorem \ref{thm:cliques_in_Lk} the number of cliques of size $c$ in $\mathfrak{L}_k$ is given by
	\begin{equation*}
		\Delta_c^{\mathfrak{L}_k} = \left(\binom{\bar{n}-c}{k-1}+\binom{\bar{n}-c}{k-(c-1)}\right)\Delta_c^L.
	\end{equation*}
	This term is positive if and only if $\binom{\bar{n}-c}{k-1}+\binom{\bar{n}-c}{k-(c-1)}>0$ which amounts to $k-(c-1)>0$ or $k-1<\bar{n}-c$. Assume that $k-(c-1)<0$ and $k-1>\bar{n}-c$. Then, we conclude $\frac{\bar{n}}{2}+1 < c$. Therefore, the clique has size greater then half of the vertex set but $L$ is regular and not complete such that we obtain a contradiction.
\end{proof}
Furthermore, we can characterize the diameter of the $\mathfrak{L}_k$ as a function of the outer boundary of $k$ sub-graphs.
\begin{theorem}\label{thm:diameter_of_L_k}
	Let $L$ be a simple connected graph on $\bar{n}$ vertices with $\mathrm{diam}(L)=2$ and $k\in\left\{1,\hdots,\left\lfloor\frac{\bar{n}}{2}\right\rfloor\right\}$. Denote by $L_{\mathfrak{v}_{\ast}}$ the vertex induced sub-graph of $L$ on $k$ vertices $\mathfrak{v}_{\ast}\in\mathfrak{L}_k$ with the smallest outer boundary, i.e.,
	\begin{equation*}
		\mathcal{O}_b(L_{\mathfrak{v}_{\ast}}):=\left|\left(\bigcup_{v\in \mathfrak{v}_{\ast}} N_v\right)\backslash \mathfrak{v}_{\ast} \right|=\min_{\mathfrak{w}\in\mathfrak{V}_k} \left|\left(\bigcup_{w\in \mathfrak{w}} N_w\right)\backslash \mathfrak{w} \right|.
	\end{equation*}
	Define $\Delta_2^{(k)}(L):=\min\{\bar{n}-\mathcal{O}_b(L_{\mathfrak{v}_{\ast}})-k,k\}$. Then, the diameter of $\mathfrak{L}_k$ satisfies
	\begin{equation}
		\mathrm{diam}(\mathfrak{L}_k)= 2k-|k-\Delta_2(L)|=k+\Delta_2(L).
	\end{equation}
\end{theorem}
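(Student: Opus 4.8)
The plan is to reinterpret distances in $\mathfrak{L}_k$ as a token–sliding problem and to compute them through a transport functional. Two adjacent vertices of $\mathfrak{L}_k$ differ by moving a single particle along an edge of $L$, so $\mathrm{dist}_{\mathfrak{L}_k}(\mathfrak{v},\mathfrak{w})$ equals the minimal number of such elementary moves carrying the \emph{set} $\mathfrak{v}$ to the set $\mathfrak{w}$, the particles being indistinguishable. For $k$-subsets $\mathfrak{v},\mathfrak{w}$ define the transport cost $\Phi(\mathfrak{v},\mathfrak{w})=\min_{\sigma}\sum_{v\in\mathfrak{v}}d_L(v,\sigma(v))$, the minimum being over bijections $\sigma:\mathfrak{v}\to\mathfrak{w}$. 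Writing $N[\mathfrak{u}]:=\mathfrak{u}\cup\bigcup_{u\in\mathfrak{u}}N_u$ and $D:=\bar{n}-k-\mathcal{O}_b(L_{\mathfrak{v}_{\ast}})=\max_{|\mathfrak{u}|=k}\lvert V\setminus N[\mathfrak{u}]\rvert$ (the maximal number of vertices at distance $2$ from a $k$-set, attained at $\mathfrak{v}_{\ast}$ by definition of the minimal outer boundary), the two claimed expressions agree because $\Delta_2(L)=\min\{D,k\}\le k$. I will prove $\mathrm{dist}_{\mathfrak{L}_k}=\Phi$ and then that $\max_{\mathfrak{v},\mathfrak{w}}\Phi=k+\Delta_2(L)$.

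For the lower bound $\mathrm{dist}_{\mathfrak{L}_k}\ge\Phi$ I use $\Phi(\,\cdot\,,\mathfrak{w})$ as a potential: if a single move sends a configuration $\mathfrak{c}$ to $\mathfrak{c}'$ by relocating a particle from $a$ to an adjacent $b$, then re-routing the optimal plan for $\mathfrak{c}'$ through $a$ and using $d_L(a,\sigma(b))\le d_L(b,\sigma(b))+1$ gives $\Phi(\mathfrak{c},\mathfrak{w})\le\Phi(\mathfrak{c}',\mathfrak{w})+1$, so every path has length at least $\Phi(\mathfrak{v},\mathfrak{w})$. To realise the diameter lower bound I evaluate $\Phi$ at $\mathfrak{v}_{\ast}$. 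Since $\mathrm{diam}(L)=2$, every vertex of $R:=V\setminus N[\mathfrak{v}_{\ast}]$ is at distance exactly $2$ from all of $\mathfrak{v}_{\ast}$. If $D\ge k$ I take $\mathfrak{w}\subseteq R$ with $\lvert\mathfrak{w}\rvert=k$, forcing every matched pair to cost $2$ and hence $\Phi=2k=k+\Delta_2(L)$; if $D<k$ I take $\mathfrak{w}=R\cup P$ with $P$ a set of $k-D$ vertices of the outer boundary of $\mathfrak{v}_{\ast}$ (available since that boundary has $\mathcal{O}_b(L_{\mathfrak{v}_{\ast}})=\bar{n}-k-D\ge k-D$ elements, using $k\le\lfloor\bar{n}/2\rfloor$), so that the $D$ targets in $R$ cost $2$ each and the rest at least $1$, whence $\Phi\ge 2D+(k-D)=k+\Delta_2(L)$.

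For the upper bound I first show $\mathrm{dist}_{\mathfrak{L}_k}\le\Phi$ constructively. Fix an optimal plan $\sigma$; particles matched to themselves stay put, particles at distance $1$ slide directly, and particles at distance $2$ travel along a length-$2$ geodesic. The essential point is that, the particles being indistinguishable, there are no cyclic dependencies to resolve (a ``swap'' of two particles changes nothing), so the only scheduling issue is the momentary occupancy of targets and of the intermediate vertices of the length-$2$ routes; since at every instant exactly $\bar{n}-k\ge k$ vertices are free, one may process the moves in an order that always finds an empty relay, exactly as the single centre of a star is reused sequentially. This gives a schedule of length $\Phi(\mathfrak{v},\mathfrak{w})$. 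It remains to bound $\Phi\le k+\Delta_2(L)$ for every pair. Matching common vertices to themselves, $\Phi=m+p$ where $m=\lvert\mathfrak{v}\setminus\mathfrak{w}\rvert$ and $p$ is the number of distance-$2$ pairs in an optimal matching between $S=\mathfrak{v}\setminus\mathfrak{w}$ and $T=\mathfrak{w}\setminus\mathfrak{v}$; by K\"onig's theorem $p=\max_{A\subseteq T}\bigl(\lvert A\rvert-\lvert N_L(A)\cap S\rvert\bigr)$, and since $m\le k$ it suffices to prove $p\le\Delta_2(L)+(k-m)$.

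The main obstacle is the resulting deficiency estimate, which I isolate as the key lemma: \emph{for any $k$-set $\mathfrak{v}$ and any $A$ disjoint from $\mathfrak{v}$ one has $\lvert A\rvert-\lvert N_L(A)\cap\mathfrak{v}\rvert\le D$.} Granting it, the disjoint case $m=k$ yields $p\le\min\{D,k\}=\Delta_2(L)$, and the general case follows because replacing $\mathfrak{v}$ by $S$ discards at most $k-m$ neighbours of $A$, so the deficiency grows by at most $k-m$. To prove the lemma set $B:=N_L(A)\cap\mathfrak{v}$, so $A$ is non-adjacent to $\mathfrak{v}\setminus B$ and hence $A\subseteq V\setminus N[\mathfrak{v}\setminus B]$; I then extend the $(k-\lvert B\rvert)$-set $\mathfrak{v}\setminus B$ to a $k$-set $\mathfrak{u}$ by adjoining $\lvert B\rvert$ vertices chosen to delete as few vertices of $A$ as possible from the distance-$2$ region, the aim being to retain at least $\lvert A\rvert-\lvert B\rvert$ of them and conclude $D\ge\lvert V\setminus N[\mathfrak{u}]\rvert\ge\lvert A\rvert-\lvert B\rvert$. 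Controlling this deletion is exactly where $\mathrm{diam}(L)=2$ enters, and the extremal case of the star $K_{1,\bar{n}-1}$ (for which all the above inequalities are simultaneously tight) shows the bound is best possible. Combining $\mathrm{dist}_{\mathfrak{L}_k}=\Phi$ with $\max\Phi=k+\Delta_2(L)$ then gives $\mathrm{diam}(\mathfrak{L}_k)=k+\Delta_2(L)=2k-\lvert k-\Delta_2(L)\rvert$.
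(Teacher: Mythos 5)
Your reduction of $\mathrm{dist}_{\mathfrak{L}_k}$ to the transport functional $\Phi$, and of $\max\Phi$ to a matching--deficiency bound via K\"onig's theorem, is a genuinely different and in principle more rigorous route than the paper's, which simply asserts that exactly $\mathcal{A}(L_{\mathfrak{v}})$ particles must be displaced by $2$ and writes down the resulting $\max$--$\min$ formula. Your potential argument for $\mathrm{dist}_{\mathfrak{L}_k}\ge\Phi$ is correct, and your lower-bound configurations at $\mathfrak{v}_{\ast}$ (using $\mathfrak{w}\subseteq R$ when $D\ge k$ and $\mathfrak{w}=R\cup P$ when $D<k$, with the count $\mathcal{O}_b(L_{\mathfrak{v}_{\ast}})\ge k-D$ guaranteed by $k\le\lfloor\bar{n}/2\rfloor$) do give $\mathrm{diam}(\mathfrak{L}_k)\ge k+\Delta_2(L)$.

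However, two steps are declarations of intent rather than proofs, and one of them is the crux. First, the scheduling claim $\mathrm{dist}_{\mathfrak{L}_k}\le\Phi$: the sentence ``at every instant $\bar{n}-k\ge k$ vertices are free, so one may always find an empty relay'' conflates the existence of \emph{some} free vertex with the existence of a free \emph{common neighbour} of the particular source--target pair; the correct argument has to follow the chain of geodesic-successors through occupied vertices, re-match when the chain meets a settled token, and invoke the absence of improving cycles in an optimal matching to show the chain terminates at a free vertex. None of this is in the proposal. Second, and more seriously, the key lemma $\lvert A\rvert-\lvert N_L(A)\cap\mathfrak{v}\rvert\le D$ is introduced with ``granting it'', and the sketch offered does not work as stated: when you extend $\mathfrak{v}\setminus B$ by $\lvert B\rvert$ vertices, a single adjoined vertex may be adjacent to many (even all) elements of $A$ and thus delete far more than one vertex of $A$ from the distance-$2$ region, and you give no argument that $\lvert B\rvert$ vertices with total damage at most $\lvert B\rvert$ exist; the natural candidates (vertices of $A$ itself, or the vertices of $B$) visibly fail when $A$ induces many edges. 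Since the entire upper bound $\max\Phi\le k+\Delta_2(L)$ rests on this lemma, the proposal does not establish the theorem. For calibration, the paper's own proof omits both verifications as well --- it asserts the displacement count outright --- so you have at least correctly isolated where the real work lies, but you have not done that work.
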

\begin{proof}
	Since the diameter of the graph is given by a $\max$-$\min$ problem over the paths in the graph, we have to consider paths in $\mathfrak{L}_k$. In fact, this can be interpreted as moving particle systems of size $k$ on the underlying $L$. Under the assumption $\mathrm{diam}(L)=2$ we can define for any $\mathfrak{v}\in\mathfrak{V}_k$ the outer boundary of $\mathfrak{v}$ as a sub-graph of $L$ by
	\begin{equation*}
		\mathcal{O}_b(L_{\mathfrak{v}}):=\left|\left(\bigcup_{v\in \mathfrak{v}} N_v\right)\backslash \mathfrak{v}\right|.
	\end{equation*}
	These are the vertices in $L$ which are not in $\mathfrak{v}$ and for which there is a $v\in\mathfrak{v}$ such that they are neighbors. Note that possibly the set $\{w\in\mathcal{O}_b(L_{\mathfrak{v}})|\exists v\in\mathfrak{v}:d_L(v,w)=2\}$ is not empty but due to the assumption $\mathrm{diam}(L)=2$ for all $v,w\in V$ we have $d_L(v,w)\leq 2$. Consider now the set $V\backslash \left(\mathcal{A}_b(L_{\mathfrak{v}})\sqcup \mathfrak{v}\right)$ which corresponds to the set of vertices which are at distance $2$ of all $v\in\mathfrak{v}$. We write
	\begin{equation*}
		\mathcal{A}(L_{\mathfrak{v}}) = \left|V\backslash \left(\mathcal{O}_b(L_{\mathfrak{v}})\sqcup \mathfrak{v}\right)\right|=\bar{n}-\left|\mathcal{O}_b(L_{\mathfrak{v}})\right|-k.
	\end{equation*}
	Then, exactly $\mathcal{A}(L_{\mathfrak{v}})$ particles in $\mathfrak{v}$ have to be displaced at most by $2$ to attain any other configuration $\mathfrak{w}$, even at a minimal distance. Consequently, the maximal minimal distance from $\mathfrak{v}$ to any $\mathfrak{w}$ is given by
	\begin{equation*}
		\max_{\mathfrak{w}} d(\mathfrak{v},\mathfrak{w}) = 2\max\{0,\min\{k,\mathcal{A}(L_{\mathfrak{v}})\}\} + \max\{0,\min\{\bar{n}-k,k-\mathcal{A}(L_{\mathfrak{v}})\}\}.	
	\end{equation*}	  
	Note that $\mathcal{A}(L_{\mathfrak{v}})$ is maximized by $\mathfrak{v}_{\ast}$ as defined in the assumptions of the theorem and since $\max_{\mathfrak{w}} d(\mathfrak{v},\mathfrak{w})$ is monotonous in $\mathcal{A}(L_{\mathfrak{v}})$ and $\min\{k,\mathcal{A}(L_{\mathfrak{v}_{\ast}})\}=\Delta_2^{(k)}(L)$ we conclude
	\begin{equation*}
		\mathrm{diam}(\mathfrak{L}_k)=\max_{\mathfrak{v},\mathfrak{w}} d(\mathfrak{v},\mathfrak{w}) = 2\max\{0,\min\{k,\mathcal{A}(L_{\mathfrak{v}_{\ast}})\}\} + \max\{0,\min\{\bar{n}-k,k-\mathcal{A}(L_{\mathfrak{v}_{\ast}})\}\}.
	\end{equation*}
	Under the assumption $k\in\left\{1,\hdots,\left\lfloor\frac{\bar{n}}{2}\right\rfloor\right\}$ we find $\bar{n}-k\geq \left\lfloor\frac{\bar{n}}{2}\right\rfloor \geq k\geq k-\mathcal{A}(L_{\mathfrak{v}})$	and using $\Delta_2^{(k)}(L)\geq 0$, we obtain
	\begin{align*}
		\mathrm{diam}(\mathfrak{L}_k) &= 2\min\{k,\Delta_2^{(k)}(L)\} + \max\{0,k-\mathcal{A}(L_{\mathfrak{v}_{\ast}})\}\\
		&= \min\{2k,2\Delta_2^{(k)}(L)\} + k- \Delta_2^{(k)}(L)\\
		&= k + \Delta_2^{(k)}(L) -|k-\Delta_2^{(k)}(L)| + k -\Delta_2^{(k)}(L) = 2k-|k-\Delta_2^{(k)}(L)| 
	\end{align*}
	and finally, since $\Delta_2^{(k)}(L)\leq k$ we conclude $\mathrm{diam}(\mathfrak{L}_k)= k + \Delta_2^{(k)}(L)$.
\end{proof}
We see from the equation $\mathrm{diam}(\mathfrak{L}_k)= 2k-|k-\Delta_2(L)|$ that the crude upper of the diameter of $\mathfrak{L}_k$ by $\mathrm{diam}(L)\,k$ is only correct if $\Delta_2(L)=k$, which implies $\bar{n}-\mathcal{O}_b(L_{\mathfrak{v}_{\ast}})-k\geq k$ and, therefore, also
\begin{equation*}
	\bar{n}-2k\geq \mathcal{O}_b(L_{\mathfrak{v}_{\ast}}). 
\end{equation*} 
Since $\mathcal{O}_b(L_{\mathfrak{v}_{\ast}})$ describes the smallest outer boundary of any vertex induced sub-graph on $k$ vertices this becomes a more and more restrictive condition on the geometry of $L$ as $k$ tends to $\left\lfloor\frac{\bar{n}}{2}\right\rfloor$. This leads to the following realization.
\begin{corollary}\label{cor:star_graph_Lk_diameter}
	Let $L$ be a star graph with $\bar{n}$ beams and $k\leq \frac{\bar{n}-1}{2}$ . Then, the diameter of $\mathfrak{L}_k$ satisfies $\mathrm{diam}(\mathfrak{L}_k)= 2k$. 
\end{corollary}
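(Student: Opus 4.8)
The plan is to obtain the result as a direct application of Theorem~\ref{thm:diameter_of_L_k}, so the first task is to verify its hypotheses for the star graph $L$. Taking $\bar{n}=|V|$ as in the theorem, a star consists of one central vertex $c$ adjacent to all the others, together with $\bar{n}-1$ pendant leaves (the beams), each adjacent only to $c$; hence any two leaves are joined through $c$ and $\mathrm{diam}(L)=2$, as required. Since $k\leq\frac{\bar{n}-1}{2}\leq\frac{\bar{n}}{2}$, the restriction $k\in\left\{1,\hdots,\left\lfloor\frac{\bar{n}}{2}\right\rfloor\right\}$ holds as well. It therefore remains only to evaluate $\Delta_2^{(k)}(L)$, for which I would compute the minimal outer boundary $\mathcal{O}_b(L_{\mathfrak{v}_{\ast}})$.

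Second, I would identify the configuration $\mathfrak{v}_{\ast}$ minimizing the outer boundary, splitting according to whether the center $c$ belongs to $\mathfrak{v}$. If $c\in\mathfrak{v}$, then every leaf is a neighbour of $c$, so $\bigcup_{v\in\mathfrak{v}}N_v$ contains all leaves and the outer boundary consists of every leaf outside $\mathfrak{v}$, giving $\mathcal{O}_b(L_{\mathfrak{v}})=(\bar{n}-1)-(k-1)=\bar{n}-k$. If instead $c\notin\mathfrak{v}$, then $\mathfrak{v}$ consists of $k$ leaves, each of whose only neighbour is $c$, so $\left(\bigcup_{v\in\mathfrak{v}}N_v\right)\backslash\mathfrak{v}=\{c\}$ and $\mathcal{O}_b(L_{\mathfrak{v}})=1$. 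Such a configuration exists because $k\leq\frac{\bar{n}-1}{2}<\bar{n}-1$ leaves are available, and since $L$ is connected with $\mathfrak{v}\neq V$ the boundary can never be empty; hence the global minimum is $\mathcal{O}_b(L_{\mathfrak{v}_{\ast}})=1$.

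Finally, I would substitute this value into $\Delta_2^{(k)}(L)=\min\{\bar{n}-\mathcal{O}_b(L_{\mathfrak{v}_{\ast}})-k,\,k\}=\min\{\bar{n}-1-k,\,k\}$. The hypothesis $k\leq\frac{\bar{n}-1}{2}$ is exactly the inequality $2k\leq\bar{n}-1$, i.e.\ $\bar{n}-1-k\geq k$, so the minimum equals $k$ and thus $\Delta_2^{(k)}(L)=k$. Theorem~\ref{thm:diameter_of_L_k} then yields $\mathrm{diam}(\mathfrak{L}_k)=k+\Delta_2^{(k)}(L)=2k$, as claimed. I do not anticipate a genuine obstacle; the only delicate point is the case distinction in the boundary computation, where one must confirm that \emph{excluding} the center collapses the entire boundary to the single vertex $c$, and that this beats any configuration containing $c$, so that the global minimum is indeed $1$.
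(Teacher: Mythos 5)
Your proposal is correct and follows exactly the route the paper intends: the corollary is presented as an immediate consequence of Theorem~\ref{thm:diameter_of_L_k} via the observation that the minimal outer boundary of a $k$-subset of a star is $1$ (achieved by taking $k$ leaves), so that $\Delta_2^{(k)}(L)=\min\{\bar{n}-1-k,k\}=k$ precisely when $k\leq\frac{\bar{n}-1}{2}$. The only remark worth making is that you read the star as having $\bar{n}$ vertices (hence $\bar{n}-1$ beams) rather than the literal ``$\bar{n}$ beams'' of the statement; this is the reading consistent with the stated bound $k\leq\frac{\bar{n}-1}{2}$ and with the condition $\bar{n}-2k\geq\mathcal{O}_b(L_{\mathfrak{v}_{\ast}})$ derived just before the corollary, so your resolution of the paper's notational ambiguity is the right one.
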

Therefore, due to Corollary \ref{cor:star_graph_Lk_diameter} the diameter $2$ graph $L$ which implies a maximal $k$ for which the particle has diameter $2k$ is the star graph, which has a very forgiving structure and which has many analytically describable properties. It gives also an intuition on how to prove further properties of $\mathfrak{L}_k$. We find explicitly the vertex connectivity of $\mathfrak{L}_k$.
\begin{theorem}\label{thm:L_k_vertex_connectivity}
	Let $L$ be a $\bar{d}$-regular graph on $\bar{n}$ vertices and let $k\in\{1,\hdots,\bar{n}-1\}$. Then, $\kappa(\mathfrak{L}_k) = \min_{\mathfrak{v}\in\mathfrak{V}_k}\mathrm{deg}_k(\mathfrak{v})$ and a corresponding vertex cut is the neighborhood of $\mathfrak{v}_{\ast}\in\mathfrak{V}_k$ with $\mathrm{deg}_k(\mathfrak{v}_{\ast}) = \min_{\mathfrak{v}\in\mathfrak{V}_k}\mathrm{deg}_k(\mathfrak{v})$. 
\end{theorem}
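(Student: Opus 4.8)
The plan is to prove the two inequalities separately; the upper bound is routine and already exhibits the asserted cut, so the entire difficulty sits in the lower bound. Set $\delta:=\min_{\mathfrak v\in\mathfrak V_k}\deg_k(\mathfrak v)$ and pick a minimiser $\mathfrak v_\ast$. Deleting the neighbourhood $N(\mathfrak v_\ast)$ isolates $\mathfrak v_\ast$; as long as $\binom{\bar n}{k}\ge\delta+2$ (which holds whenever $L$ is not complete, the complete case being the Johnson graph itself) at least one further vertex survives, so $N(\mathfrak v_\ast)$ is a genuine separator and $\kappa(\mathfrak L_k)\le|N(\mathfrak v_\ast)|=\delta$. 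This is Whitney's bound made explicit, and it is here that the cut in the statement comes from. Regularity also lets me compute $\delta$ in closed form: a move in $\mathfrak L_k$ slides a single token across an edge of $L$, so $\deg_k(\mathfrak v)$ is the size of the edge boundary of $\mathfrak v$ in $L$, namely $\deg_k(\mathfrak v)=k\bar d-2\,e(L[\mathfrak v])$, and hence $\delta=k\bar d-2\max_{\mathfrak v\in\mathfrak V_k}e(L[\mathfrak v])$, the minimiser being the densest $k$-subset of $V$.

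Everything therefore reduces to $\kappa(\mathfrak L_k)\ge\delta$, which is the real content. I would attack it by induction on $k$ using the fibre decomposition of $\mathfrak V_k$ relative to a fixed $u\in V$: let $\mathcal A=\{\mathfrak v:u\in\mathfrak v\}$ and $\mathcal B=\{\mathfrak v:u\notin\mathfrak v\}$. Forgetting the token sitting at $u$ identifies $\mathfrak L_k[\mathcal A]$ with the $(k-1)$-particle graph on $L-u$, while $\mathfrak L_k[\mathcal B]$ is the $k$-particle graph on $L-u$; the two fibres are linked by the moves that slide a token onto or off $u$, i.e. by the edges of $L$ incident to $u$, which give each configuration in $\mathcal A$ up to $\bar d$ partners in $\mathcal B$. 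Given any separator $S$ with $|S|<\delta$ and any two non-adjacent $\mathfrak v,\mathfrak w$ it must separate, I would choose $u$ so that $S$ splits favourably across the two fibres, apply the inductive hypothesis to bound how much of each fibre $S$ can disconnect, and then use the dense cross-fibre linkage to reassemble $\delta$ internally disjoint $\mathfrak v$--$\mathfrak w$ paths, contradicting $|S|<\delta$ by Menger's theorem.

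The lower bound is the main obstacle, and it is subtler than the regularity hypothesis suggests. Already the base case $k=1$, where $\mathfrak L_1\cong L$ and $\delta=\bar d$, asserts $\kappa(L)=\bar d$; this is \emph{false} for general connected $\bar d$-regular graphs. Taking two copies of $K_{\bar d+1}$, deleting one edge from each and rejoining the deleted pairs by two edges across the copies produces a $\bar d$-regular graph with a $2$-cut, so regularity alone cannot seed the induction, and the statement should be read with $k=1$ excluded or with the stronger hypothesis $\kappa(L)=\bar d$. Even granting a valid base case, the second difficulty is that $L-u$ is no longer regular, so the induction must be run for $k$-particle graphs of arbitrary graphs, carrying $\delta$ as the edge-boundary minimum rather than a single number; the genuinely delicate point is controlling the cross-fibre linkage when $S$ concentrates near $u$, and it is precisely here that a quantitative token-graph connectivity estimate (in the spirit of the connectivity bounds for $F_k$ established by Lea\~{n}os and Trujillo-Negrete) would be needed to close the argument.
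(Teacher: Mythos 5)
Your upper bound and the identification of the cut are fine and agree with what the theorem asserts: deleting $N(\mathfrak v_\ast)$ isolates $\mathfrak v_\ast$, and the degree formula $\deg_k(\mathfrak v)=k\bar d-2|E_{\mathfrak v}|$ matches Proposition \ref{prop:degree_formula_in_tilde_H_k_r}. The lower bound, however, which you correctly flag as the entire content of the statement, is not proved in your proposal: you describe an induction on $k$ via the fibre decomposition $\mathcal A\sqcup\mathcal B$ relative to a fixed $u\in V$, and then state explicitly that a further quantitative connectivity estimate ``would be needed to close the argument.'' As written this is a plan, not a proof, so the proposal has a genuine gap; it also differs in strategy from the paper, which argues directly by constructing families of marked particle trajectories between two arbitrary configurations and invoking Menger's theorem.

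That said, your diagnosis of the base case is correct and is in fact a counterexample to the theorem as stated. For $k=1$ one has $\mathfrak L_1\cong L$ and $\min_{\mathfrak v}\deg_1(\mathfrak v)=\bar d$, so the theorem asserts $\kappa(L)=\bar d$ for every connected $\bar d$-regular graph; your construction (two copies of $K_{\bar d+1}$, one edge deleted from each, rejoined by two cross edges) is a connected $3$-regular graph with a $2$-cut, refuting this. This also locates the flaw in the paper's own argument: routing each token along a shortest path and perturbing tokens inside the neighborhoods of their initial and final positions produces $\min\{\deg_k(\mathfrak v),\deg_k(\mathfrak w)\}$ ``marked'' path families, but the proof silently assumes these families give \emph{internally disjoint} paths in $\mathfrak L_k$. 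When $L$ has a small vertex separator, all of these paths are forced through a common intermediate configuration (for $k=1$, literally through the cut vertices), so the count of marks does not lower-bound the number of internally disjoint paths and Menger's theorem cannot be applied. The statement needs an additional hypothesis (for instance $\kappa(L)=\bar d$, or a connectivity assumption of the kind appearing in the token-graph literature you cite); under such a hypothesis your fibre induction is a plausible route, but it is not carried out here.
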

We illustrate first the proof of Theorem \ref{thm:L_k_vertex_connectivity}. Assume we have any $\bar{d}$-regular graph and $k=2$ particles. The proof is based on the idea of constructing marked path in $L$ and applying Menger's theorem to the ensemble of those paths. Fix to this end two configurations $\mathfrak{v}=\{v_1,v_2\}$ and $\mathfrak{w}=\{w_1,w_2\}$ which are two vertices in $\mathfrak{L}_2$. Now consider shortest paths $\phi_{v_1,w_1},\phi_{v_2,w_2}$ from $v_1$ to $w_1$ and from $v_2$ to $w_2$, respectively. We can obtain $\bar{d}-1$ distinct pairs of marked paths if either $v_1$ and $w_1$ or $v_2$ and $w_2$ are neighbors by the following construction. Assume without loss of generality that $v_1$ and $v_2$ are neighbors.
\begin{enumerate}
	\item Move the particle at $v_2$ to one of its neighbors $\tilde{v}_2$ which are not $v_1$.
	\item Move the particle at $v_1$ along $\phi_{v_1,w_1}$ to $w_1$.
	\item Move the particle at $w_1$ to any neighbor $\tilde{w}_1$ which is not $w_2$. 
	\item Move the particle at $\tilde{v}_2$ through $v_2$ along $\phi_{v_2,w_2}$ to $w_2$.
	\item Move the particle at $\tilde{w}_1$ to $w_1$. 
\end{enumerate}
Since there are $\bar{d}-1$ neighbors of $v_2$ and at least $\bar{d}-1$ neighbors of $w_1$ which allow this construction, we obtain $\bar{d}-1$ distinct pairs of marked paths. By symmetry of $v_1$ and $v_2$, we can construct exactly $2(\bar{d}-1)$ distinct pairs of marked graphs which correspond to $2(\bar{d}-1)$ paths in $\mathfrak{L}_2$. Since the minimal degree in $\mathfrak{L}_2$ is equal to $2(\bar{d}-1)$ we obtain by Menger's theorem the result given in Theorem \ref{thm:L_k_vertex_connectivity} for $k=2$ and by construction that the vertex cut is the neighborhood of the vertex in $\mathfrak{L}_2$ with minimal degree. The main challenge of the proof for arbitrary $k$ will be to rule out blockages between particles when they move along the shortest paths $\phi$ between their origin and their destination.\par
For the general case, the first point in the construction becomes: Move all particles $v_j$ but the particle at $v_i$ to one of their neighbors $\tilde{v}_j$ which are not $v_i$. The particle $v_i$ will then be moved as described before, then a second one goes to its original position, then moves and so on.
\begin{proof}
	Using the same construction as described before but with any number $k$ of particles, consider the initial configuration $\mathfrak{v}=\{v_1,\hdots,v_k\}$ and the finial configuration $\mathfrak{w}=\{w_1,\hdots,w_k\}$ and assume that $l$ particles $\{v_{i_1},\hdots,v_{i_l}\}$ have already been moved to neighbors $\{w_{i_1}^{(\mathfrak{n})},\hdots,w_{i_l}^{(\mathfrak{n})}\}$ of corresponding destinations $\{w_{i_1},\hdots,w_{i_l}\}$. Moreover, assume that $\{v_{i_{l+1}},\hdots,v_{i_{k-l-1}}\}$ have been moved to respective neighbors $\{v_{i_{l+1}}^{(\mathfrak{n})},\hdots,v_{i_{k-l-1}}^{(\mathfrak{n})}\}$. Finally, we want to move $v_{i_k}$ to $w_{i_k}$ along the shortest path $\phi_{v_{i_k},w_{i_k}}$. If no occupied vertices lie in $\phi_{v_{i_k},w_{i_k}}$, then nothing else has to be said. If there is an index $s$ such that $\phi_{v_{i_k},w_{i_k}}(s)$ is occupied, move $v_{i_k}$ to $\phi_{v_{i_k},w_{i_k}}(s-1)$, then move the particle in $\phi_{v_{i_k},w_{i_k}}(s)$ to $\phi_{v_{i_k},w_{i_k}}(s+1)$, then the particle which originates from $v_{i_k}$ to $\phi_{v_{i_k},w_{i_k}}(s)$ and, finally, the particle which is now in $\phi_{v_{i_k},w_{i_k}}(s+1)$ continues along $\phi_{v_{i_k},w_{i_k}}$. Since all other particles are in the neighborhood of initial or final positions, varying their positions in the respective neighborhoods leads to unique marked encounters along shortest paths. Consequently, the number of marks a single particle $v$, which moves to $w$, gives is $\min\{\bar{d}-\mathrm{deg}^{L_{\mathfrak{v}}}(v), \bar{d}-\mathrm{deg}^{L_{\mathfrak{w}}}(w)\}$. Iterating through all possible configurations of particles in neighborhoods of initial and final positions, we sum over all $v\in\mathfrak{v}$ and $w\in\mathfrak{w}$ and conclude that there are in total $\min\{\mathrm{deg}_k(\mathfrak{v}),\mathrm{deg}_k(\mathfrak{w})\}$ distinct ways to mark paths of particles in $\mathfrak{v}$ to sites in $\mathfrak{w}$. Consequently, we obtain that the number of sets of size $k$ of distinct marked paths and, therefore, the number of distinct paths in $\mathfrak{L}_k$ between $\mathfrak{v}$ and $\mathfrak{w}$ is given by $\min\{\mathrm{deg}_k(\mathfrak{v}),\mathrm{deg}_k(\mathfrak{w})\}$.\par
	Therefore, the minimal number of distinct paths between any two vertices $\mathfrak{v},\mathfrak{w}$ in $\mathfrak{L}_k$ is 
	\begin{equation}
		\min_{\mathfrak{v},\mathfrak{w}}\min\{\mathrm{deg}_k(\mathfrak{v}),\mathrm{deg}_k(\mathfrak{w})\}=\min_{\mathfrak{v}}\mathrm{deg}_k(\mathfrak{v}).
	\end{equation}
	By Menger's theorem, this yields $\kappa(\mathfrak{L}_k)=\min_{\mathfrak{v}}\mathrm{deg}_k(\mathfrak{v})$.
\end{proof}
Again, to illustrate the result in a more intuitive we come back to the example of the star graph and present a short proof in this special case, which gives a first taste of the general proof structure of Theorem \ref{thm:L_k_vertex_connectivity}.
\begin{corollary}
	Let $L$ be the star graph on $\bar{n}+1$ vertices and $\bar{n}$ beams. Let $k\in\{1,\hdots,\bar{n}\}$. The star graph $L$ has vertex connectivity $\kappa(L)=1$ and $\mathfrak{L}_k$ satisfies
	\begin{equation}
		\kappa(\mathfrak{L}_k) = \begin{cases}
							k,& k\leq \dfrac{\bar{n}+1}{2},\vspace{5pt}\\
							\bar{n}+1-k,& k\geq \dfrac{\bar{n}+1}{2}.
						 \end{cases}
	\end{equation}		 	
\end{corollary}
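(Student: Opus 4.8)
The plan is to make the structure of $\mathfrak{L}_k$ explicit for the star, compute its minimum degree $\delta(\mathfrak{L}_k)$, and then prove that $\kappa(\mathfrak{L}_k)=\delta(\mathfrak{L}_k)$; the stated piecewise formula is simply the evaluation of this minimum. Write $c$ for the centre and $\ell_1,\dots,\ell_{\bar n}$ for the leaves. Because every edge of $L$ is incident to $c$, any single legal move slides the particle on $c$ onto an empty leaf or slides a leaf particle onto an empty centre, so the occupancy of $c$ flips along every edge of $\mathfrak{L}_k$. Thus $\mathfrak{V}_k$ splits into the configurations containing $c$ (type $A$, written $\{c\}\cup S$ with $S$ a $(k-1)$-subset of the leaves) and those avoiding $c$ (type $B$, written as a $k$-subset $T$ of the leaves), with $\{c\}\cup S$ adjacent to $T$ exactly when $S\subset T$. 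From this I read off $\mathrm{deg}_k(\{c\}\cup S)=\bar n-(k-1)=\bar n+1-k$ and $\mathrm{deg}_k(T)=k$, whence $\delta(\mathfrak{L}_k)=\min\{k,\bar n+1-k\}$.

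The inequality $\kappa(\mathfrak{L}_k)\le\delta(\mathfrak{L}_k)$ is immediate, since deleting the neighbourhood of a vertex of minimum degree isolates it and $\mathfrak{L}_k$ is never complete for $\bar n\ge 2$. The real content, and the step I expect to be the main obstacle, is the reverse bound $\kappa(\mathfrak{L}_k)\ge\delta(\mathfrak{L}_k)$; note that Theorem \ref{thm:L_k_vertex_connectivity} cannot be quoted directly here because the star is not regular. I would establish it by producing, between any two vertices $\mathfrak{v},\mathfrak{w}$, at least $\min\{\mathrm{deg}_k(\mathfrak{v}),\mathrm{deg}_k(\mathfrak{w})\}\ge\min\{k,\bar n+1-k\}$ internally disjoint paths and then appealing to Menger's theorem exactly as in the proof of Theorem \ref{thm:L_k_vertex_connectivity}.

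The construction is greatly simplified by the fact that all motion is funnelled through $c$: every path in $\mathfrak{L}_k$ is a sequence of moves that alternately empties and fills the centre, so the blockage phenomenon that dominates the general argument disappears, and path families obtained by routing distinct leaf particles through $c$ are automatically internally disjoint. Separating the cases according to the types of $\mathfrak{v}$ and $\mathfrak{w}$, I would run the five-step shuffle of Theorem \ref{thm:L_k_vertex_connectivity}, using the $\bar n+1-k$ free leaves when the centre is occupied and the $k$ occupied leaves when it is empty as the degrees of freedom indexing the disjoint paths; minimising the resulting count $\min\{\mathrm{deg}_k(\mathfrak{v}),\mathrm{deg}_k(\mathfrak{w})\}$ over all pairs yields $\kappa(\mathfrak{L}_k)=\min\{k,\bar n+1-k\}$. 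A cleaner alternative, which I would at least mention, is that the symmetric group on the leaves acts transitively on the flags $(S,x)$ with $|S|=k-1$ and $x\notin S$, hence on the edges of $\mathfrak{L}_k$; since $\mathfrak{L}_k$ is connected by Theorem \ref{thm:L_frak_k_connected}, the classical fact that a connected edge-transitive graph has vertex connectivity equal to its minimum degree gives $\kappa(\mathfrak{L}_k)=\delta(\mathfrak{L}_k)$ at once.

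It then remains only to unfold $\min\{k,\bar n+1-k\}$ into the stated cases: the minimum equals $k$ when $k\le\frac{\bar n+1}{2}$ and equals $\bar n+1-k$ when $k\ge\frac{\bar n+1}{2}$. As an internal consistency check I would observe that these two regimes are exchanged by the isomorphism $\mathfrak{L}_k\stackrel{\sim}{=}\mathfrak{L}_{\bar n+1-k}$ furnished by Theorem \ref{thm:quotient_graph_isomorphism} (with $|V|=\bar n+1$), so that it would in fact suffice to treat $k\le\frac{\bar n+1}{2}$ and transport the remaining case by this symmetry.
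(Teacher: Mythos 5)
Your proposal is correct and follows the same structural decomposition as the paper's proof: split $\mathfrak{V}_k$ according to whether the centre is occupied, observe that every edge joins a centre-occupied configuration to a centre-free one (adjacency being containment $S\subset T$), read off the two degrees $\bar n+1-k$ and $k$, and reduce the second regime to the first via $\mathfrak{L}_k\stackrel{\sim}{=}\mathfrak{L}_{\bar n+1-k}$. Where you genuinely diverge is at the lower bound $\kappa(\mathfrak{L}_k)\ge\delta(\mathfrak{L}_k)$: the paper disposes of it in one sentence (``as long as $\mathfrak{v}$ is connected to at least one other vertex there is a path to any other $\mathfrak{v}'$''), which asserts rather than proves that deleting fewer than $\delta$ vertices cannot disconnect the graph, whereas you supply two actual arguments --- an explicit Menger-style family of $\min\{\mathrm{deg}_k(\mathfrak{v}),\mathrm{deg}_k(\mathfrak{w})\}$ internally disjoint paths routed through the centre, and, more elegantly, the observation that $S_{\bar n}$ acting on the leaves is transitive on the flags $(S,x)$ and hence on the edges of $\mathfrak{L}_k$, so Watkins' theorem on connected edge-transitive graphs gives $\kappa=\delta$ outright. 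You are also right to flag that Theorem \ref{thm:L_k_vertex_connectivity} cannot simply be cited since the star is not regular; the paper's corollary is presented as an illustration of that theorem's method rather than an application of its statement, and your write-up makes the missing justification explicit. The only cosmetic caveat is the side remark that $\mathfrak{L}_k$ is never complete for $\bar n\ge 2$: this is cleanest via bipartiteness of $\mathfrak{L}_k$ (Theorem \ref{thm:L_bipartite_eq_L_k_bipartite}) together with $|\mathfrak{V}_k|\ge 3$, and the degenerate case $\bar n=1$ still satisfies the stated formula directly.
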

\begin{proof}
	We only consider the case $k\leq \frac{\bar{n}+1}{2}$. The second case follows by Proposition \ref{thm:quotient_graph_isomorphism}. We denote by $\alpha$ the center of the star and by $\beta_1,\hdots,\beta_n$ the vertices at the ends of the beams. Let $\mathfrak{v}\in\mathfrak{V}_k$. We have to consider two cases, namely $\alpha\in\mathfrak{v}$ and $\alpha\not\in\mathfrak{v}$. If $\alpha\not\in\mathfrak{v}$, i.e, $\mathfrak{v}\subseteq\{\beta_1,\hdots,\beta_n\}$, then any path $(\mathfrak{v},\mathfrak{u},\mathfrak{w})$ from $\mathfrak{v}$ to $\mathfrak{w}\subseteq\{\beta_1,\hdots,\beta_n\}$ of length $2$ has to include $\alpha$ in $\mathfrak{u}$. Consequently, any $\mathfrak{v}\subseteq\{\beta_1,\hdots,\beta_n\}$ has exactly $k$ neighbors and by the same argument any $\mathfrak{v}$ containing $\alpha$ has exactly $n+1-k$ neighbors. Furthermore, for $\mathfrak{v},\mathfrak{w}\in\mathfrak{V}_k$ with $\mathfrak{v}\neq\mathfrak{w}$ and $\alpha\in\mathfrak{v}$ as well as $\alpha\in\mathfrak{w}$ we have $\langle \mathfrak{v},\mathfrak{w} \rangle\not\in\mathfrak{E}_k$. Consequently, an edge $\langle\mathfrak{v},\mathfrak{w}\rangle\in\mathfrak{E}_k$ if and only if $\alpha \in\mathfrak{v}$ and $\alpha\not\in\mathfrak{w}$ or vice versa. Therefore, as long as $\mathfrak{v}$ is connected to at least one other vertex $\mathfrak{w}$ there is a path to any other $\mathfrak{v}'\in\mathfrak{V}_k$. We conclude that the vertex connectivity is given by the minimal degree which is $\min\{k,\bar{n}+1-k\}$. This yields the claim.
\end{proof}

We see that the $\mathfrak{L}_k$ carries over a lot of structure from $L$ which makes it richer but also more challenging from an analytical point of view. This is also true for the automorphism group of $\mathfrak{L}_k$, to which we turn now.
\begin{theorem}\label{thm:automorphism_group_L_k}
	Let $L$ be a simple connected graph $k\in\{1,\hdots,\bar{n}-1\}$. Then, the automorphism group $\mathrm{Aut}(L)$ of $L$ induces a sub-group of the automorphism group $\mathrm{Aut}(\mathfrak{L}_k)$ of $\mathfrak{L}_k$.
\end{theorem}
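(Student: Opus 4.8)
The plan is to show that the natural setwise action of $\mathrm{Aut}(L)$ on $k$-subsets of $V$ descends to an action by graph automorphisms of $\mathfrak{L}_k$, and that this action is faithful. Concretely, for each $\sigma\in\mathrm{Aut}(L)$ I would define $\hat{\sigma}\colon\mathfrak{V}_k\to\mathfrak{V}_k$ by $\hat{\sigma}(\mathfrak{v})=\sigma(\mathfrak{v}):=\{\sigma(v)\mid v\in\mathfrak{v}\}$. Since $\sigma$ is a bijection of $V$, the image $\sigma(\mathfrak{v})$ is again a $k$-subset, so $\hat{\sigma}$ is well defined, and it is a bijection of $\mathfrak{V}_k$ with inverse $\widehat{\sigma^{-1}}$. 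The goal is then to prove that $\Phi\colon\sigma\mapsto\hat{\sigma}$ is an injective group homomorphism $\mathrm{Aut}(L)\to\mathrm{Aut}(\mathfrak{L}_k)$, whose image is the desired sub-group.

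The key step is edge preservation. Here I would use that any bijection of $V$ commutes with the symmetric difference, i.e.\ $\sigma(\mathfrak{v})\triangle\sigma(\mathfrak{w})=\sigma(\mathfrak{v}\triangle\mathfrak{w})$ for all $\mathfrak{v},\mathfrak{w}\subseteq V$; this is immediate from the observation that $x\in\sigma(\mathfrak{v})\triangle\sigma(\mathfrak{w})$ precisely when its unique $\sigma$-preimage lies in exactly one of $\mathfrak{v},\mathfrak{w}$. Now suppose $\langle\mathfrak{v},\mathfrak{w}\rangle\in\mathfrak{E}_k$, so that $\mathfrak{v}\triangle\mathfrak{w}=\{v,w\}$ with $\langle v,w\rangle\in E$. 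Then $\hat{\sigma}(\mathfrak{v})\triangle\hat{\sigma}(\mathfrak{w})=\{\sigma(v),\sigma(w)\}$, and since $\sigma\in\mathrm{Aut}(L)$ we have $\langle\sigma(v),\sigma(w)\rangle\in E$, whence $\langle\hat{\sigma}(\mathfrak{v}),\hat{\sigma}(\mathfrak{w})\rangle\in\mathfrak{E}_k$ by Definition \ref{def:mathfrak_L_k}. Applying the same argument to $\sigma^{-1}$ yields the converse implication, so $\hat{\sigma}$ preserves adjacency in both directions and is therefore an automorphism of $\mathfrak{L}_k$.

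That $\Phi$ is a homomorphism is routine: $\widehat{\sigma\tau}(\mathfrak{v})=(\sigma\tau)(\mathfrak{v})=\sigma(\tau(\mathfrak{v}))=\hat{\sigma}(\hat{\tau}(\mathfrak{v}))$, so $\widehat{\sigma\tau}=\hat{\sigma}\circ\hat{\tau}$. The part I expect to require the hypothesis $k\in\{1,\dots,\bar{n}-1\}$, and hence to be the main obstacle, is faithfulness: one must rule out a nontrivial $\sigma$ acting trivially on $\mathfrak{V}_k$. Suppose $\hat{\sigma}=\mathrm{id}$ but $\sigma(v)=u\neq v$ for some $v$. Because $k\leq\bar{n}-1$, the set $V\setminus\{v,u\}$ has at least $k-1$ elements, so one can complete $\{v\}$ to a $k$-subset $\mathfrak{v}$ with $v\in\mathfrak{v}$ and $u\notin\mathfrak{v}$; then $u=\sigma(v)\in\hat{\sigma}(\mathfrak{v})$ forces $\hat{\sigma}(\mathfrak{v})\neq\mathfrak{v}$, a contradiction. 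Hence $\sigma=\mathrm{id}_V$ and $\Phi$ is injective, so $\mathrm{Aut}(L)\cong\Phi(\mathrm{Aut}(L))\leq\mathrm{Aut}(\mathfrak{L}_k)$, which is exactly the claimed induced sub-group. The only delicate bookkeeping beyond this sketch is verifying that the counting constraint $k-1\leq\bar{n}-2$ genuinely holds across the whole range of $k$, including the boundary case $k=1$.
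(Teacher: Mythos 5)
Your proposal is correct and follows essentially the same route as the paper: both define the induced map by the setwise action $\mathfrak{v}\mapsto\{\sigma(v)\mid v\in\mathfrak{v}\}$ and verify edge preservation via the compatibility of bijections with the symmetric difference together with $\sigma$ preserving edges of $L$. Your additional checks — that $\sigma\mapsto\hat{\sigma}$ is a group homomorphism and is faithful for $k\leq\bar{n}-1$ — are correct and in fact supply details the paper's proof leaves implicit.
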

Indeed, the automorphism group of $\mathfrak{L}_k$ is larger then the sub-group induced by $\mathrm{Aut}(L)$. To this end, we consider identifiable sub-graphs $\mathfrak{v},\mathfrak{w}\in\mathfrak{V}_k$. Define for $\mathfrak{v},\mathfrak{w}\in\mathfrak{V}_k$ the equivalence relation $\mathfrak{v}\sim\mathfrak{w}$ if and only if $L_{\mathfrak{v},\mathfrak{v}^c}\stackrel{\sim}{=} L_{\mathfrak{w},\mathfrak{w}^c}$. For fixed $\mathfrak{v}_i$ define $[\mathfrak{v}_i]:=\{\mathfrak{u}\in\mathfrak{V}_k|\mathfrak{u}\sim\mathfrak{v}_i\}$ the equivalence class of $\mathfrak{v}_i$. We consider a fixed equivalence class $[\mathfrak{v}_i]$ and define for $\mathfrak{v},\mathfrak{w}\in [\mathfrak{v}_i]$ by $\Phi_{\mathfrak{v}}^{\mathfrak{w}}$ the isomorphism $L_{\mathfrak{v},\mathfrak{v}^c}\stackrel{\sim}{=} L_{\mathfrak{w},\mathfrak{w}^c}$. We can extend $\Phi_{\mathfrak{v}}^{\mathfrak{w}}$ to an automorphism of $\mathfrak{L}_k$ which is not induced by an automorphism on $L$. \\
First, consider the neighborhood of $\mathfrak{v}$. Since all vertices in the neighborhood of $\mathfrak{v}$ consist of subsets of $V$ of size $k$ which differ from $\mathfrak{v}$ by exactly one vertex in $L$ and $L_{\mathfrak{v},\mathfrak{v}^c}$ represents all possible transitions to neighbors of $\mathfrak{v}$, we exploit $L_{\mathfrak{v},\mathfrak{v}^c}\stackrel{\sim}{=} L_{\mathfrak{w},\mathfrak{w}^c}$ to obtain a mapping from the neighborhood of $\mathfrak{v}$ to the neighborhood of $\mathfrak{w}$ under which the equivalence classes with respect to $\sim$ are invariant. Iteratively, by this construction we obtain a map $\hat{\Phi}_{\mathfrak{v}}^{\mathfrak{w}}:\mathfrak{L}_k\to \mathfrak{L}_k$ which preserves the neighborhood property. Consequently, $\hat{\Phi}_{\mathfrak{v}}^{\mathfrak{w}}$ defines an automorphism of $\mathfrak{L}_k$. By changing $\mathfrak{v}$ and $\mathfrak{w}$ we obtain another such map since $\hat{\Phi}_{\mathfrak{v}}^{\mathfrak{w}}(\mathfrak{v})=\mathfrak{w}$ and, hence, $\hat{\Phi}_{\mathfrak{v}}^{\mathfrak{w}'}(\mathfrak{v})=\mathfrak{w}'$ for another $\mathfrak{w}'\in[\mathfrak{v}_i]$ which implies that $\mathrm{Aut}(\mathfrak{L}_k)$ is larger then the induced automorphism from $L$. 
\section{Proofs of general properties}
\begin{proof}[Proof of Theorem \ref{thm:L_frak_k_connected}]
	For this proof we use the concept of multi-sets, i.e., sets which allow the appearance of the same element multiple times. E.g. $\{1,1,2\}\neq \{1,2\}$. If a function acts on a multi-set in such a way that it changes one specific element which appears multiple times only one of them is altered. Assume for the rest of this proof that any set is a multi-set and any function mapping from sets to sets maps from multi-sets to multi-sets instead. In particular, any $\mathfrak{v}\in \mathfrak{V}_k$ will be considered as a multi-set.\par 
	Since $L$ is connected, there is for any pair $v,w\in V$ a self-avoiding path $\phi_{v,w}$ between $v$ and $w$. Let $\mathfrak{v},\mathfrak{w}\in \mathfrak{V}_k$. We are going to prove that there is path between $\mathfrak{v}$ and $\mathfrak{w}$. Define $\bar{\mathfrak{v}}:=\mathfrak{v}\backslash \mathfrak{w}$ and $\bar{\mathfrak{w}}$, analogously. Since $|\mathfrak{v}|=k=|\mathfrak{w}|$ also $|\bar{\mathfrak{v}}|=|\bar{\mathfrak{w}}|$. \\
	Fix $v_1\in \bar{\mathfrak{v}}$ and $w_1\in \bar{\mathfrak{w}}$. Then there exists a path $\phi_{v_1,w_1}^1$ in $L$. We want to construct iteratively a sequence of maps $(\Phi_i^1)_{i=1}^{|\phi_{v_1,w_1}|}$ by   
	\begin{equation}
		\Phi_1^1(\mathfrak{u}_0^1)=\left(\mathfrak{v}\backslash\{\phi_{v_1,w_1}^1(0)\}\right) \cup \{\phi_{v_1,w_1}^1(1)\}=:\mathfrak{u}_1^1
	\end{equation}
	with $u_0^1= \mathfrak{v}$.	Furthermore, $\mathfrak{u}_i^1 = \Phi_{i-1}^1(u_{i-1}^1)$. With the same procedure for $j=2,\hdots,|\bar{w}|$ and $\mathfrak{u}_0^j=\mathfrak{u}_{|\phi_{v_{j-1},w_{j-1}}|}^{j-1}$ we obtain a sequence of maps
	\begin{equation*}
		\Psi = \left(\Phi_1^{1},\hdots,\Phi_{|\phi_{v_{1},w_{1}}|}^{1},\Phi_{1}^{2},\hdots,\Phi_{|\phi_{v_{|\bar{\mathfrak{w}}|},w_{|\bar{\mathfrak{w}}|}}|}^{|\bar{\mathfrak{w}}|}\right)
	\end{equation*}
	which maps the $\mathfrak{v}$ to $\mathfrak{w}$ by $\left(\Phi_{|\phi_{v_{|\bar{\mathfrak{w}}|},w_{|\bar{\mathfrak{w}}|}}|}^{|\bar{\mathfrak{w}}|}\circ\hdots\circ\Phi_{1}^{2}\circ\Phi_{|\phi_{v_{1},w_{1}}|}^{1}\circ\hdots\circ\Phi_1^{1}\right)(\mathfrak{v})=\mathfrak{w}$. Elements might appear twice in the same $\mathfrak{u}_i^j$. Denote by $\tau$ the first entry in $\Psi$ such that $(\Psi_\tau\circ\hdots\circ\Psi_1)(\mathfrak{v})$ contains the same entry twice and by $\kappa$ the largest number such that $(\Psi_{\tau+\iota}\circ\hdots\circ\Psi_1)(\mathfrak{v})$ contains one entry twice for $\iota=0,\hdots,\kappa$. Transform the vector $(\Psi_{\iota'})_{\iota'=1}^{\tau+\kappa}$ into
	\begin{equation}
		(\Psi'_{\iota'})_{\iota'=1}^{\tau+\kappa}:=(\Psi_1,\hdots,\Psi_{\tau+\kappa},\Psi_{\tau+\kappa-1},\hdots,\Psi_{\tau}).
	\end{equation}		 
	Then $\left(\Psi'_{\tau+\iota}\circ\hdots\circ\Psi'_1\right)(\mathfrak{v})$ contains all elements only once for $\iota=0,\hdots,\kappa$. Iterate this procedure until $\Psi'=(\Psi'_{\iota'})_{\iota'=1}^{|\Psi|}$ such that for all $\iota'\in\{1,\hdots,|\Psi|\}$ we have $(\Psi'_{\iota'}\circ\hdots\circ\Psi'_1)(\mathfrak{v})\in \mathfrak{V}_k$ and for $\iota'\in\{1,\hdots,|\Psi|-1\}$
	\begin{equation}
		(\Psi'_{\iota'}\circ\hdots\circ\Psi'_1)(\mathfrak{v})\triangle(\Psi'_{\iota'+1}\circ\hdots\circ\Psi'_1)(\mathfrak{v}) = \{u_i^j,u_{i+1}^j\}
	\end{equation}		 
	with $\langle u_i^j,u_{i+1}^j\rangle\in E$ and $(\Psi'_{|\Psi|}\circ\hdots\circ\Psi'_1)(\mathfrak{v})=\mathfrak{w}$. Hence $\left((\Psi'_{\iota'}\circ\hdots\circ\Psi'_1)(\mathfrak{v})\right)_{\iota'=0}^{|\Psi|}$ defines a path from $\mathfrak{v}$ to $\mathfrak{w}$ in $\mathfrak{L}_k$.
\end{proof}
\begin{proposition}\label{prop:L_bipartite_L_k_bipartite}
	If $L$ is bipartite so is $\mathfrak{L}_k$.	
\end{proposition}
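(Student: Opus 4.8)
The plan is to exhibit an explicit proper $2$-colouring of $\mathfrak{V}_k$ induced by the bipartition of $L$. Since $L$ is bipartite, write $V = A \sqcup B$ with every edge of $L$ joining a vertex of $A$ to a vertex of $B$. For each configuration $\mathfrak{v} \in \mathfrak{V}_k$ I would define its colour to be the parity $c(\mathfrak{v}) := |\mathfrak{v} \cap A| \bmod 2$. The claim then reduces to showing that $c$ takes opposite values on the two endpoints of every edge of $\mathfrak{L}_k$, for in that case the $A$-parity-even and $A$-parity-odd configurations furnish a bipartition of $\mathfrak{V}_k$.

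The key step is to read off how the colour changes across an edge. By Definition \ref{def:mathfrak_L_k}, an edge $\langle \mathfrak{v}, \mathfrak{w} \rangle \in \mathfrak{E}_k$ satisfies $\mathfrak{v} \triangle \mathfrak{w} = \{v, w\}$ with $\langle v, w \rangle \in E$; that is, $\mathfrak{w}$ is obtained from $\mathfrak{v}$ by replacing a single vertex $v \in \mathfrak{v}$ by an $L$-neighbour $w \notin \mathfrak{v}$, while all other elements are unchanged. Since $L$ is bipartite, the edge $\langle v, w \rangle$ runs between $A$ and $B$, so exactly one of $v, w$ lies in $A$. Hence $|\mathfrak{w} \cap A| = |\mathfrak{v} \cap A| \pm 1$, and therefore $c(\mathfrak{w}) \neq c(\mathfrak{v})$.

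Consequently every edge of $\mathfrak{L}_k$ joins a colour-$0$ vertex to a colour-$1$ vertex, so $\mathfrak{V}_k = c^{-1}(0) \sqcup c^{-1}(1)$ is a bipartition and $\mathfrak{L}_k$ is bipartite. The only point requiring care is the observation that an edge of $\mathfrak{L}_k$ corresponds to exactly one particle hopping along exactly one edge of $L$ --- equivalently, that $\mathfrak{v} \triangle \mathfrak{w}$ is a single $L$-edge rather than, say, a pair of vertices lying both inside or both outside $A$. This is precisely the content of the definition of $\mathfrak{E}_k$ combined with the bipartiteness of $L$, so I expect no genuine obstacle beyond unwinding the definitions; the argument is essentially that the $A$-parity is a hop-invariant mod $2$ exactly because hops always cross the bipartition.

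I would remark that this parity invariant is tailored to the \emph{if} direction only. The reverse implication (bipartiteness of $\mathfrak{L}_k$ forcing bipartiteness of $L$) is naturally argued by lifting an odd cycle of $\mathfrak{L}_k$ down to a closed odd walk in $L$, or conversely by embedding an odd cycle of $L$ as an odd cycle of $\mathfrak{L}_k$ via moving a single particle around it; that half belongs to the full equivalence recorded in Theorem \ref{thm:L_bipartite_eq_L_k_bipartite} rather than to this proposition.
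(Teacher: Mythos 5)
Your argument is correct, and it takes a genuinely different route from the paper. You give a direct, constructive proof: the bipartition $V=A\sqcup B$ of $L$ induces the explicit $2$-colouring $c(\mathfrak{v})=|\mathfrak{v}\cap A|\bmod 2$ of $\mathfrak{V}_k$, and since every edge of $\mathfrak{L}_k$ replaces one occupied vertex by an $L$-neighbour lying on the other side of the bipartition, the parity flips across every edge. The paper instead argues by contradiction: it assumes an odd cycle $\phi$ in $\mathfrak{L}_k$, collects the multi-set $E_\phi$ of $L$-edges traversed along $\phi$, decomposes these into closed walks $\phi_v$ in $L$ traced by the individual particles, and derives a parity contradiction from $|\phi|=\sum_v|\phi_v|$ with each $|\phi_v|$ even. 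Your approach buys simplicity and rigour at low cost --- it exhibits the bipartition of $\mathfrak{L}_k$ explicitly and sidesteps the somewhat delicate (and in the paper only sketched) claim that the edge multi-set of a cycle in $\mathfrak{L}_k$ decomposes into closed particle trajectories in $L$; it is essentially the observation that $A$-parity is a hop invariant. What the paper's trajectory decomposition buys is alignment with the particle-tracking machinery reused elsewhere (notably in the proofs of Theorem \ref{thm:L_frak_k_connected} and Proposition \ref{prop:L_k_bipartite_L_bipartite}), where lifting and projecting walks between $L$ and $\mathfrak{L}_k$ is the main tool; but for this direction of the equivalence your invariant is the cleaner argument. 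Your closing remark correctly identifies that the converse requires a separate construction, which the paper handles via Lemma \ref{lem:cycle_length_in_L_if_bipartite_L_k} and Proposition \ref{prop:L_k_bipartite_L_bipartite}.
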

\begin{proof}
	We again use the notion of multi-sets. Assume $\mathfrak{L}_k$ is not bipartite and let $\phi=(\mathfrak{v}_1,\mathfrak{v}_2,\hdots,\mathfrak{v}_l,\mathfrak{v}_1)$ be a cycle of odd length, i.e, $l$ is an odd positive integer. Let $\mathfrak{v}_{l+1}=\mathfrak{v}_{1}$ and define the multi-set $E_{\phi}$ of edges in $L$ by
	\begin{equation*}
		E_{\phi} = \{\langle v,w \rangle\in E| \exists i\in\{1,\hdots,l\}, \mathfrak{v}_i\triangle\mathfrak{v}_{i+1}= \{v,w\}\}.
	\end{equation*}
	Since $\phi$ is a cycle we can construct, using the edges in $E_{\phi}$, for any $v\in \mathfrak{v}_1$ a cycle $\phi_v$ in $L$ such that the edges in all cycles $\phi_{v}$ combined correspond to $E_{\phi}$. Since $L$ is bipartite $|\phi_w|$ is even for every $w$ and $|E_{\phi}| = |\phi|$ is odd but
	\begin{equation*}
		|\phi| = |E_{\phi}| =\sum_{v\in\mathfrak{v}_1} |\phi_v| 
	\end{equation*}
	where the left hand side of the equation is odd and the right hand side is even which leads to a contradiction.		  		  
\end{proof}
Before we can continue with the proof of an equivalence for bipartite graphs we need a preliminary lemma, which adds additional information about a lower bound on the length of odd cycles in $L$ if $\mathfrak{L}_k$ is bipartite. This, in turn, yields a construction of a odd cycle in $\mathfrak{L}_k$ which will lead in Proposition \ref{prop:L_k_bipartite_L_bipartite} to the conclusion that $\mathfrak{L}_k$ bipartite implies also $L$ bipartite.
\begin{lemma}\label{lem:cycle_length_in_L_if_bipartite_L_k}
	If $\mathfrak{L}_k$ is bipartite then the shortest odd cycle in $L$ is longer than $\bar{n}-k+1$.
\end{lemma}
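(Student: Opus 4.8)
The plan is to argue by contraposition. Rather than starting from $\mathfrak{L}_k$ bipartite, I would assume that $L$ contains an odd cycle whose length $\ell$ satisfies $\ell\le\bar{n}-k+1$, and then exhibit an odd cycle in $\mathfrak{L}_k$, contradicting the hypothesis that $\mathfrak{L}_k$ is bipartite (a bipartite graph contains no odd cycle). This is the correct reduction, since the negation of the statement ``the shortest odd cycle in $L$ is longer than $\bar{n}-k+1$'' is precisely ``$L$ has an odd cycle of length at most $\bar{n}-k+1$''.

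So I would fix an odd cycle $C=(c_1,c_2,\dots,c_\ell,c_1)$ in $L$ with $\ell$ odd and $\ell\le\bar{n}-k+1$. The whole point of the length bound is that it is exactly the room condition $\bar{n}-\ell\ge k-1$, which lets me choose a set $P\subseteq V\setminus\{c_1,\dots,c_\ell\}$ of $k-1$ vertices disjoint from the cycle. These vertices will host $k-1$ ``parked'' particles that never move, while a single particle is driven once around $C$.

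Concretely, I would set $\mathfrak{v}_i:=\{c_i\}\cup P\in\mathfrak{V}_k$ for $i=1,\dots,\ell$. Each $\mathfrak{v}_i$ has size $k$ because $c_i\notin P$, and for consecutive indices (read mod $\ell$) one has $\mathfrak{v}_i\triangle\mathfrak{v}_{i+1}=\{c_i,c_{i+1}\}$ with $\langle c_i,c_{i+1}\rangle\in E$, so $\langle\mathfrak{v}_i,\mathfrak{v}_{i+1}\rangle\in\mathfrak{E}_k$. The $\mathfrak{v}_i$ are pairwise distinct, since the $c_i$ are distinct and $P$ is common to all of them, so $(\mathfrak{v}_1,\dots,\mathfrak{v}_\ell,\mathfrak{v}_1)$ is a genuine cycle of odd length $\ell$ in $\mathfrak{L}_k$. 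Hence $\mathfrak{L}_k$ is not bipartite, which is the desired contradiction; consequently every odd cycle of $L$ has length greater than $\bar{n}-k+1$.

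I expect no deep obstacle here: the construction ``spin one particle around the odd cycle while keeping the remaining $k-1$ fixed'' transports the parity of $C$ directly into $\mathfrak{L}_k$. The one point that genuinely requires care, and the reason the exact threshold $\bar{n}-k+1$ appears, is the disjointness of $P$ from $C$, i.e. having enough spare vertices to station the other particles off the cycle; this is guaranteed by, and only by, $\ell\le\bar{n}-k+1$. As a minor safeguard, even if one did not verify distinctness of the $\mathfrak{v}_i$ directly, the standard fact that a closed walk of odd length contains an odd cycle would still force non-bipartiteness; but here the $\mathfrak{v}_i$ are manifestly distinct, so the closed walk is already a cycle.
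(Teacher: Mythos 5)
Your proposal is correct and follows essentially the same route as the paper: assume an odd cycle $\phi$ of length at most $\bar{n}-k+1$ exists, park $k-1$ particles on vertices off the cycle (possible precisely because of the length bound), and drive one particle around $\phi$ to produce an odd cycle in $\mathfrak{L}_k$, contradicting bipartiteness. Your write-up is in fact slightly more careful than the paper's, since you explicitly verify that the configurations $\mathfrak{v}_i$ are pairwise distinct.
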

\begin{proof}
	Let $k\in\{1,\hdots,\bar{n}-1\}$ and assume there is an odd cycle $\phi=(v_1,v_2,\hdots,v_l,v_1)$ in $L$ with $|\phi|\leq \bar{n}-k+1$. Then choose a set $\mathfrak{v}\subset V\backslash(\{\phi_2,\hdots,\phi_l\})$ and $v_1\in\mathfrak{v}$. Note that $|V\backslash(\{\phi_2,\hdots,\phi_l\})|\geq k$ under the condition of the length of $\phi$. Define the cycle $\Phi = (\mathfrak{v}_1 =\mathfrak{v},\mathfrak{v}_2,\hdots,\mathfrak{v}_l,\mathfrak{v}_1)$ by $\mathfrak{v}_{i+1}=\mathfrak{v}_i\backslash\{v_i\}\cup\{v_{i+1}\}$. Then, $\Phi$ is an odd cycle which is a contradiction to the fact that $\mathfrak{L}_k$ is bipartite.    
\end{proof}
The goal is obviously to show that the lower bound shown in Lemma \ref{lem:cycle_length_in_L_if_bipartite_L_k} is rather conservative, the length of a non-existing walk being defined as infinity. Nonetheless, the intermediate step, deriving a lower bound on the length of odd cycles in $L$ if $\mathfrak{L}_k$ is bipartite, allows us the construction of a cycle in $\mathfrak{L}_k$ based on any odd cycle in $L$ with identical length. This yields the following result.
\begin{proposition}\label{prop:L_k_bipartite_L_bipartite}
	If $\mathfrak{L}_k$ is bipartite for some $k\in\{1,\hdots,\bar{n}-1\}$ then so is $L$.
\end{proposition}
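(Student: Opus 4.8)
The plan is to prove the contrapositive: assuming $L$ is not bipartite, I will exhibit a closed walk of odd length in $\mathfrak{L}_k$, which shows that $\mathfrak{L}_k$ is not bipartite either. Since $L$ is not bipartite it contains an odd cycle $C=(v_1,\dots,v_l,v_1)$ with $l$ odd, and the whole argument reduces to transporting this cycle into $\mathfrak{L}_k$ by moving a suitable number of particles around $C$ while keeping all remaining particles frozen on $V\setminus C$. Concretely, I would place $m$ particles on the $l$ vertices of $C$ and the remaining $k-m$ particles on the $\bar{n}-l$ vertices of $V\setminus C$, where $m$ is chosen in the range $\max\{1,k+l-\bar{n}\}\le m\le \min\{k,l-1\}$. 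This interval is nonempty precisely because $1\le k\le \bar{n}-1$ and $2\le l\le \bar{n}$, so at least one token and at least one vacancy always sit on $C$; the frozen particles never move, hence every move within $C$ is a legal edge of $\mathfrak{L}_k$ and the resulting walk in $\mathfrak{L}_k$ has the same length as the walk performed on $C$.

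The core is a statement about the exclusion configuration graph of $m$ indistinguishable tokens on a single cycle $C_l$. I would first record the parity invariant: orienting $C_l$, every single move is either a forward ($+1$) or backward ($-1$) step of one token, so the length of any closed walk equals $F+B\equiv F-B\pmod 2$, where $F-B$ is the total signed displacement. Since the occupied set returns to itself its position-sum is unchanged, whence $F-B=l\,\omega$, where $\omega\in\mathbb{Z}$ is the net winding number of the tokens around $C_l$. Consequently any closed walk has length $\equiv l\,\omega \pmod 2$, so a walk with odd winding and $l$ odd is automatically of odd length. The remaining task is to construct, for every $m$ with $1\le m\le l-1$, a closed walk with $\omega=1$: I would realize it by advancing a single unit of vacancy once around the whole cycle, i.e. pushing tokens forward one at a time in the order dictated by the holes so that after exactly $l$ forward moves the configuration returns to its start having wound once. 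The case $m=1$ is exactly the single-particle construction of Lemma \ref{lem:cycle_length_in_L_if_bipartite_L_k}, and the dual case $m=l-1$ (a single vacancy) is its image under $\mathfrak{L}_k\stackrel{\sim}{=}\mathfrak{L}_{\bar{n}-k}$ from Theorem \ref{thm:quotient_graph_isomorphism}.

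Combining these ingredients finishes the proof: choosing $m$ in the admissible range above and running the winding-$1$ walk on $C$ yields a closed walk in $\mathfrak{L}_k$ of length $\equiv l\equiv 1\pmod 2$, which is an odd closed walk and hence forces $\mathfrak{L}_k$ to contain an odd cycle. Together with Proposition \ref{prop:L_bipartite_L_k_bipartite} this upgrades the implication to the full equivalence of Theorem \ref{thm:L_bipartite_eq_L_k_bipartite}.

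I expect the main obstacle to be exactly the construction of the winding-$1$ walk for intermediate values of $m$, i.e. when several tokens and several vacancies coexist on $C_l$. Lemma \ref{lem:cycle_length_in_L_if_bipartite_L_k} only covers short odd cycles ($l\le \bar{n}-k+1$, which forces $m$ down to $1$), and its dual covers $l\le k+1$; for the genuinely long odd cycles with $l$ near $\bar{n}$ and $k$ near $\bar{n}/2$ --- the sharpest test case being $L=C_{\bar{n}}$ with $\bar{n}$ odd and $k=(\bar{n}-1)/2$ --- neither extreme applies and one must manage collisions between the moving particles. The delicate point is to specify an order of forward moves that never attempts to push a token onto an occupied vertex and still returns the full set to its starting configuration after a single loop; once this ordering is pinned down the parity bookkeeping via $F-B=l\,\omega$ is routine.
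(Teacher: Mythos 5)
Your strategy coincides at its core with the paper's: both take an odd cycle $C=(v_1,\dots,v_l,v_1)$ of $L$ and transport it to an odd closed walk in $\mathfrak{L}_k$ by rotating the particles sitting on $C$ once around it while the remaining particles stay frozen off $C$. The differences are in the bookkeeping, and on two counts your version is cleaner. First, the paper needs a case split: Lemma \ref{lem:cycle_length_in_L_if_bipartite_L_k} disposes of odd cycles of length at most $\bar{n}-k+1$ (where all but one particle can be parked off $C$), and the proof of the proposition then handles the long cycles, invoking $k\le\bar{n}/2$ together with $\mathfrak{L}_k\stackrel{\sim}{=}\mathfrak{L}_{\bar{n}-k}$ to ensure $|C|\ge k$ so that a vacancy survives on $C$. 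Your observation that one may simply \emph{choose} the number $m$ of particles placed on $C$ anywhere in $\left[\max\{1,k+l-\bar{n}\},\min\{k,l-1\}\right]$ --- an interval that is nonempty for every $1\le k\le\bar{n}-1$ and every cycle length $l$ --- makes both the auxiliary lemma and the $k\leftrightarrow\bar{n}-k$ reduction unnecessary. Second, the winding-number invariant $F+B\equiv F-B=l\,\omega\pmod 2$ is a genuine addition: the paper obtains oddness only because its walk happens to consist of exactly $l$ moves, whereas your invariant shows that \emph{any} closed walk along $C$ with odd winding has odd length, so the conclusion survives detours, backward moves, or reorderings in the realization.

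The step you leave open --- a collision-free ordering of the $l$ forward moves realizing winding number $1$ for intermediate $m$ --- is real but benign, and it is exactly the step the paper itself treats only sketchily via its multi-set reordering device. It closes in a few lines. Relabel the positions of $C$ as $0,\dots,l-1$ so that position $l-1$ is vacant (possible since $m\le l-1$), and let the tokens occupy $p_1<\dots<p_m\le l-2$. Move token $m$ forward from $p_m$ to $l-1$ (all intermediate positions are vacant because $p_m$ is the largest occupied position); then, for $i=m-1,\dots,1$ in decreasing order, move token $i$ forward from $p_i$ to $p_{i+1}$ (the positions strictly between were vacant to begin with and $p_{i+1}$ has just been vacated); finally move token $m$ from $l-1$ through $0$ to $p_1$, which was vacated in the last step. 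This uses $(l-1-p_m)+\sum_{i=1}^{m-1}(p_{i+1}-p_i)+(p_1+1)=l$ forward moves, crosses each edge of $C$ exactly once, and returns the occupied set to itself, so the walk has length $l$, odd; your parity invariant (or a direct count) then forces an odd cycle in $\mathfrak{L}_k$ and completes the contrapositive.
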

\begin{proof}
	Also for this proof we use the concept of multi-sets and consider all $\mathfrak{v}$ as multi-sets. Consider $\mathfrak{L}_k$ as bipartite and assume $k\leq\dfrac{\bar{n}}{2}$ as well as $L$ not bipartite. Then there exists an odd cycle $\phi$ in $L$ and by assumption as well as Lemma \ref{lem:cycle_length_in_L_if_bipartite_L_k} we have $|\phi|\geq \bar{n}-k+1\geq \dfrac{\bar{n}}{2}+1\geq |\mathfrak{v}|$ for all $\mathfrak{v}\in\mathfrak{V}_k$. Consider $\mathfrak{v}\in\mathfrak{V}_k$ with $\phi_1\in\mathfrak{v}$. We define a sequence of maps $(\Psi_i)_{i=1}^{|\phi|}$ on multi-sets by 
	\begin{equation*}
		\Psi_i(\mathfrak{w}) =\begin{cases} (\mathfrak{w}\backslash\{\phi_i\})\cup\{\phi_{i+1}\},&\text{ if }\phi_i\in\mathfrak{w}\\
											\mathfrak{w},&\text{ otherwise.}												
							  \end{cases}
	\end{equation*}
	and define $\Psi = (\Psi_1,\hdots, \Psi_{|\phi|}\circ\hdots\circ\Psi_{1})$.	This way $(\Psi_{|\phi|}\circ\hdots\circ\Psi_{1})(\mathfrak{v})=\mathfrak{v}$ and $|(\Psi_{i}\circ\hdots\circ\Psi_{1})(\mathfrak{v})|=k$ for all $i=1,\hdots,|\phi|$. Along the lines of the proof of Proposition \ref{prop:L_frak_k_connected} denote by $\tau$ the first entry in $\Psi$ such that $(\Psi_\tau\circ\hdots\circ\Psi_1)(\mathfrak{v})$ contains the same entry twice and by $\kappa$ the largest number such that $(\Psi_{\tau+\iota}\circ\hdots\circ\Psi_1)(\mathfrak{v})$ contains one entry twice for $\iota=0,\hdots,\kappa$. Remark that both $\tau$ and $\kappa$ are well defined and finite due to $|\phi| \geq k$. Transform the vector $(\Psi_{\iota'})_{\iota'=1}^{\tau+\kappa}$ into
	\begin{equation}
		(\Psi'_{\iota'})_{\iota'=1}^{\tau+\kappa}:=(\Psi_1,\hdots,\Psi_{\tau+\kappa},\Psi_{\tau+\kappa-1},\hdots,\Psi_{\tau}).
	\end{equation}		 
	Then $\left(\Psi'_{\tau+\iota}\circ\hdots\circ\Psi'_1\right)(\mathfrak{v})$ contains all elements only once for $\iota=0,\hdots,\kappa$. Iterate this procedure until $\Psi'=(\Psi'_{\iota'}\circ\hdots\circ\Psi'_1)_{\iota'=1}^{|\phi|}$. Indeed, this leads to $(\Psi'_{|\phi|}\circ\hdots\circ\Psi'_{1})(\mathfrak{v})=\mathfrak{v}$ and $(\Psi'_{\iota'}\circ\hdots\circ\Psi'_{1})(\mathfrak{v})\in\mathfrak{V}_k$ for all $\iota'=1,\hdots,|\phi|$. Hence, the vector $((\Psi'_{\iota'}\circ\hdots\circ\Psi'_{1})(\mathfrak{v}))_{\iota'=1}^{|\phi|}$ defines a cycle of length $|\phi|$ in $\mathfrak{L}_k$. But $\phi$ is an odd cycle which leads to a contradiction. The claim follows since $\mathfrak{L}_k\stackrel{\sim}{=}\mathfrak{L}_{\bar{n}-k}$.
\end{proof}
To complete this first part on the global structure of $\mathfrak{L}_k$ for simple connected graphs $L$, we consider the complement of $L$ denoted by $L^c$ and the relationship between the induced graph $\mathfrak{L}_k^c$ and $(\mathfrak{L}_k)^c$ where the complement is taken with respect to the Johnson graph $J(\bar{n},k)$.

\begin{lemma}\label{lem:L_and_complement_L_link_Johnson_graph}
	Let $L=(V,E)$ be a simple connected graph on $\bar{n}:=|V|$ vertices and $k\in\{1,\hdots,\bar{n}-1\}$. Denote by $L_{\mathfrak{v}}=(\mathfrak{v}, E_{\mathfrak{v}})$ the vertex induced sub-graph of $\mathfrak{v}$ in $L$ and by $L_{\mathfrak{v}}^c=(\mathfrak{v}, E_{\mathfrak{v}}^c)$ the vertex induced sub-graph of $\mathfrak{v}$ in $L^c=(V,E_c)$. Denote by $\mathfrak{L}_k$ and $\mathfrak{L}_k^c$ the $k$-particle graphs of $L$ and $L^c$, respectively. Moreover, denote by $\mathrm{deg}_k(\mathfrak{v})$ the degree of $\mathfrak{v}$ in $\mathfrak{L}_k$ and by $\mathrm{deg}_k^c(\mathfrak{v})$ the degree of $\mathfrak{v}$ in $\mathfrak{L}_k^c$. Then,
	\begin{equation*}
		\mathrm{deg}_k(\mathfrak{v})+\mathrm{deg}_k^c(\mathfrak{v}) = k(\bar{n}-k).
	\end{equation*}
\end{lemma}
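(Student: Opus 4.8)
The plan is to realize both $\mathfrak{L}_k$ and $\mathfrak{L}_k^c$ as spanning sub-graphs of the same Johnson graph $\mathcal{J}(\bar{n},k)$ on the common vertex set $\mathfrak{V}_k$, and to show that, at each vertex $\mathfrak{v}$, their incident edge sets partition the edges of $\mathcal{J}(\bar{n},k)$ incident to $\mathfrak{v}$. The identity then follows since $\mathcal{J}(\bar{n},k)$ is regular of degree $k(\bar{n}-k)$.

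First I would record the local structure of the Johnson graph. A vertex $\mathfrak{w}\in\mathfrak{V}_k$ is adjacent to $\mathfrak{v}$ in $\mathcal{J}(\bar{n},k)$ exactly when $|\mathfrak{v}\triangle\mathfrak{w}|=2$, say $\mathfrak{v}\triangle\mathfrak{w}=\{v,w\}$ with $v\in\mathfrak{v}$ and $w\in V\setminus\mathfrak{v}$. Sending such a neighbor to the pair $(v,w)$ gives a bijection between the Johnson-neighbors of $\mathfrak{v}$ and the set $\mathfrak{v}\times(V\setminus\mathfrak{v})$; in particular $\mathfrak{v}$ has exactly $k(\bar{n}-k)$ neighbors in $\mathcal{J}(\bar{n},k)$.

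Next I would invoke the defining condition of Definition \ref{def:mathfrak_L_k}: such a Johnson-neighbor $\mathfrak{w}$ is an edge of $\mathfrak{L}_k$ if and only if $\langle v,w\rangle\in E$, and it is an edge of $\mathfrak{L}_k^c$ if and only if $\langle v,w\rangle\in E_c$. Since $L$ is simple and $L^c$ is its complement, the pair $\{v,w\}$ with $v\neq w$ satisfies exactly one of $\langle v,w\rangle\in E$ or $\langle v,w\rangle\in E_c$; that is, $E\sqcup E_c=\binom{V}{2}$. Hence, under the bijection above, the neighbors of $\mathfrak{v}$ counted by $\mathrm{deg}_k(\mathfrak{v})$ and those counted by $\mathrm{deg}_k^c(\mathfrak{v})$ are disjoint and together exhaust all $k(\bar{n}-k)$ Johnson-neighbors of $\mathfrak{v}$. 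Counting both contributions yields
\begin{equation*}
	\mathrm{deg}_k(\mathfrak{v})+\mathrm{deg}_k^c(\mathfrak{v}) = k(\bar{n}-k).
\end{equation*}

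The argument is essentially a partition-of-edges count, so I do not expect a serious obstacle; the only point requiring care is to verify that the map sending a Johnson-neighbor $\mathfrak{w}$ to its swapped pair $(v,w)$ is genuinely a bijection, so that no incident edge is double-counted or omitted, and that the dichotomy $E\sqcup E_c=\binom{V}{2}$ assigns each such pair to precisely one of the two particle graphs. Both are immediate from the simplicity of $L$, which forces $v\neq w$ and excludes loops, so that every incident edge of $\mathcal{J}(\bar{n},k)$ lands in exactly one of $\mathfrak{E}_k$, $\mathfrak{E}_k^c$.
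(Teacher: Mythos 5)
Your proof is correct, but it runs in a slightly different direction from the paper's. The paper computes both degrees explicitly as sums over $v\in\mathfrak{v}$ of outward degrees, writing $\mathrm{deg}_k(\mathfrak{v})+\mathrm{deg}_k^c(\mathfrak{v})=\sum_{v\in\mathfrak{v}}\bigl(\mathrm{deg}(v)-\mathrm{deg}^{L_{\mathfrak{v}}}(v)\bigr)+\bigl((\bar{n}-1-\mathrm{deg}(v))-\mathrm{deg}^{L_{\mathfrak{v}}^c}(v)\bigr)$ and then invoking the identity $|E_{\mathfrak{v}}|+|E_{\mathfrak{v}}^c|=k(k-1)/2$ to simplify $k(\bar{n}-1)-k(k-1)=k(\bar{n}-k)$. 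You instead never touch the induced sub-graphs $L_{\mathfrak{v}}$, $L_{\mathfrak{v}}^c$ at all: you use the $k(\bar{n}-k)$-regularity of $\mathcal{J}(\bar{n},k)$ via the bijection between Johnson-neighbors of $\mathfrak{v}$ and pairs in $\mathfrak{v}\times(V\setminus\mathfrak{v})$, and the dichotomy $E\sqcup E_c=\binom{V}{2}$ restricted to cross-pairs. Both arguments are counting the same object (the edge cut between $\mathfrak{v}$ and $\mathfrak{v}^c$ in $L$ and in $L^c$), but yours is more economical and, notably, it directly establishes the statement that the paper only extracts from this lemma afterwards in the proof of Theorem \ref{thm:quotient_graph_isomorphism}, namely that every edge of $\mathcal{J}(\bar{n},k)$ lies in exactly one of $\mathfrak{E}_k$, $\mathfrak{E}_k^c$; the degree identity then falls out as a corollary rather than being the primary computation. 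The paper's route, on the other hand, sets up the degree-via-induced-subgraph bookkeeping that it reuses repeatedly in later sections, so it earns its extra length elsewhere.
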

\begin{proof}
	For any subset $\mathfrak{v}\subset V$ with $|\mathfrak{v}|=k$ the size of the induced sub-graphs of $L$ and $L^c$ satisfy 
	\begin{equation*}
		|E_{\mathfrak{v}}|+|E_{\mathfrak{v}}^c| = \dfrac{k(k-1)}{2}.
	\end{equation*}
	Additionally, we derive
	\begin{align*}
		\mathrm{deg}_k(\mathfrak{v})+\mathrm{deg}_k^c(\mathfrak{v}) &= \sum_{v\in\mathfrak{v}} \mathrm{deg}(v) - \mathrm{deg}^{L_{\mathfrak{v}}}(v) + (\bar{n}-1-\mathrm{deg}(v)) - \mathrm{deg}^{L_{\mathfrak{v}}^c}(v)\\
		&= k(\bar{n}-1) - 2(|E_{\mathfrak{v}}|+|E_{\mathfrak{v}}^c|) = k(\bar{n}-1)- k(k-1) = k(\bar{n}-k).
	\end{align*}
\end{proof}
\begin{proof}[Proof of Theorem \ref{thm:quotient_graph_isomorphism}]
	Consider $(\mathfrak{L}_k)^c$. Then an edge $\langle \mathfrak{v},\mathfrak{w}\rangle$ in $J(\bar{n},k)$ is an edge in $(\mathfrak{L}_k)^c$ if and only if $\mathfrak{v}\triangle\mathfrak{w}=\{v,w\}$ and $\langle v,w\rangle\not\in E$. Consequently, $\langle \mathfrak{v},\mathfrak{w}\rangle$ is an edge in $(\mathfrak{L}_k)^c$ if and only if $\mathfrak{v}\triangle\mathfrak{w}=\{v,w\}$ and $\langle v,w\rangle\in E^c$. Therefore, $\langle \mathfrak{v},\mathfrak{w}\rangle$ is an edge in $\mathfrak{L}_k^c$. Since the vertex sets are identical and by Lemma \ref{lem:L_and_complement_L_link_Johnson_graph} each edge in $J(\bar{n},k)$ is either an edge in $\mathfrak{L}_k$ or $\mathfrak{L}_k^c$, we conclude $\mathfrak{L}_k^c\stackrel{\sim}{=}(\mathfrak{L}_k)^c$.\par
	We turn now to the second claim. By basic combinatorics of drawing without repetition $|\mathfrak{V}_k|=\binom{\bar{n}}{k}=|\mathfrak{V}_{\bar{n}-k}|$ holds true. Let $\mathfrak{v}\in \mathfrak{V}_k$ and define $\mathfrak{v}^c:=V\backslash\mathfrak{v}\in \mathfrak{V}_{\bar{n}-k}$.\par
	Define the map $\Phi_k^{\bar{n}-k}(\mathfrak{v})=\mathfrak{v}^c$ which is bijective due to the preceding observations. Let $\mathfrak{v},\mathfrak{w}\in\mathfrak{V}_k$ such that $\langle\mathfrak{v},\mathfrak{w}\rangle\in\mathfrak{E}_k$. Hence, $\mathfrak{v}\triangle\mathfrak{w}=\{v,w\}$ and $\langle v,w\rangle\in E$ and $\mathfrak{v}^c\triangle\mathfrak{w}^c=\{v,w\}$ and $\langle v,w\rangle\in E$. Therefore, also  $\Phi_k^{\bar{n}-k}(\mathfrak{v}) = \mathfrak{v}^c \sim \mathfrak{w}^c = \Phi_k^{\bar{n}-k}(\mathfrak{w})$. The map $\Phi_k^{\bar{n}-k}$, therefore, defines an isomorphism between $\mathfrak{L}_{k}$ and $\mathfrak{L}_{\bar{n}-k}$ and, consequently, $\mathfrak{L}_{k}\stackrel{\sim}{=} \mathfrak{L}_{\bar{n}-k}$.
\end{proof}
\begin{proof}[Proof of Theorem \ref{thm:automorphism_group_L_k}]
	We construct in what follows the explicit corresponding automorphism to any automorphism $\phi\in\mathrm{Aut}(L)$. Let $\phi\in \mathrm{Aut}(L)$ and define the map $\Phi$ on $\mathfrak{V}_k$ by $\Phi(\mathfrak{v})=\{\phi(v)|v\in\mathfrak{v}\}$. Then, if $\Phi(\mathfrak{v})\triangle \Phi(\mathfrak{w})=\{u,\bar{u}\}$ we have $\langle \Phi(\mathfrak{v}),\Phi(\mathfrak{w})\rangle\in\mathfrak{E}_k$ if and only if $\langle u,\bar{u}\rangle\in E$. Now assume that $\langle\mathfrak{v},\mathfrak{w}\rangle\in\mathfrak{E}_k$ and $\mathfrak{v}\triangle\mathfrak{w}=\{v,w\}$. Then, we obtain that $\Phi(\mathfrak{v})\triangle \Phi(\mathfrak{w})=\{u,\bar{u}\}=\{\phi(v),\phi(w)\}$ and since $\phi$ is an isomorphism on $L$ we arrive, moreover, at $\langle u,\bar{u}\rangle \in E$. Therefore, we can conclude $\langle \Phi(\mathfrak{v}),\Phi(\mathfrak{w})\rangle\in\mathfrak{E}_k$ which implies that any automorphism on $L$ defines an automorphism on $\mathfrak{L}_k$.
\end{proof}
\section{kPGs induced by regular graphs}
	In what follows we focus on regular graphs and we assume, thus, for the reminder of this section, that $L$ is a $\bar{d}$-regular graph. This allows us to state explicit quantitative results as well as give the explicit correspondence of the relationship of a regular graph with a complete graph and the associated $\mathfrak{L}_k$ with $\mathcal{J}(\bar{n},k)$.
\subsection{Combinatorial properties of kPGs}
Note the Corollary \ref{cor:even_difference_of_degrees} yields that all vertices in $\mathfrak{L}_k$ have either even or odd degree. Hence, the size of vertices with even or odd degree is always the full vertex set size. We state for completeness its size which follows from standard combinatoric considerations as well as the size of the edge set, which demands a more involved analysis. 
\begin{proposition}\label{prop:size_edge_set_E_k}
	Consider the graph $\mathfrak{L}_k=(\mathfrak{V}_k, \mathfrak{E}_k)$ for $k\in\{1,\hdots,\bar{n}-1\}$. Then $|\mathfrak{V}_k|=\binom{\bar{n}}{k}$ and
	\begin{equation}\label{eq:size_of_edge_set_L_k_short}
		|\mathfrak{E}_k|=\dfrac{1}{2}\left(\bar{d}k\binom{\bar{n}}{k}-\bar{n}\bar{d}\binom{\bar{n}-2}{k-2}\right)=k(\bar{n}-k)\binom{\bar{n}}{k}\dfrac{\bar{d}}{2(\bar{n}-1)}.
	\end{equation}
\end{proposition}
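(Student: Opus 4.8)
The plan is to count edges through the handshaking lemma, $|\mathfrak{E}_k| = \tfrac{1}{2}\sum_{\mathfrak{v}\in\mathfrak{V}_k}\mathrm{deg}_k(\mathfrak{v})$, which reduces the problem to summing the degrees over all vertices of $\mathfrak{L}_k$. The vertex count $|\mathfrak{V}_k| = \binom{\bar{n}}{k}$ is immediate, since $\mathfrak{V}_k$ is by definition the collection of all $k$-element subsets of the $\bar{n}$-element set $V$.

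First I would record the degree of a single vertex. A neighbor $\mathfrak{w}$ of $\mathfrak{v}$ is obtained precisely by removing some $v\in\mathfrak{v}$ and inserting a neighbor $w\notin\mathfrak{v}$ of $v$ in $L$; distinct boundary edges give distinct neighbors because the symmetric difference $\{v,w\}$ recovers both endpoints. Hence $\mathrm{deg}_k(\mathfrak{v})$ equals the size of the edge boundary of $\mathfrak{v}$ in $L$. Using the degree computation already carried out in the proof of Lemma \ref{lem:L_and_complement_L_link_Johnson_graph} together with the regularity $\mathrm{deg}(v) = \bar{d}$, this yields
\begin{equation*}
	\mathrm{deg}_k(\mathfrak{v}) = \sum_{v\in\mathfrak{v}}\bigl(\bar{d} - \mathrm{deg}^{L_{\mathfrak{v}}}(v)\bigr) = k\bar{d} - 2|E_{\mathfrak{v}}|,
\end{equation*}
where $E_{\mathfrak{v}}$ denotes the edge set of the induced subgraph $L_{\mathfrak{v}}$.

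The decisive step is to evaluate $\sum_{\mathfrak{v}\in\mathfrak{V}_k}|E_{\mathfrak{v}}|$ by double counting. Exchanging the order of summation, this sum counts pairs $(e,\mathfrak{v})$ with $e\in E$ and both endpoints of $e$ contained in $\mathfrak{v}$. For a fixed edge $e$, the number of admissible $k$-subsets is $\binom{\bar{n}-2}{k-2}$, obtained by retaining the two endpoints and choosing the remaining $k-2$ vertices freely from $V$. Since $L$ is $\bar{d}$-regular, $|E| = \bar{n}\bar{d}/2$, and therefore $\sum_{\mathfrak{v}}|E_{\mathfrak{v}}| = \tfrac{\bar{n}\bar{d}}{2}\binom{\bar{n}-2}{k-2}$. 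Substituting this and the degree formula into the handshaking identity produces the first expression in \eqref{eq:size_of_edge_set_L_k_short}.

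Finally I would simplify to the closed form. Using the identity $\bar{n}\binom{\bar{n}-2}{k-2} = \tfrac{k(k-1)}{\bar{n}-1}\binom{\bar{n}}{k}$ and factoring out $k\binom{\bar{n}}{k}$, the bracket collapses via $1 - \tfrac{k-1}{\bar{n}-1} = \tfrac{\bar{n}-k}{\bar{n}-1}$ to the second expression $k(\bar{n}-k)\binom{\bar{n}}{k}\tfrac{\bar{d}}{2(\bar{n}-1)}$. The only genuinely nontrivial point is the double-counting evaluation of $\sum_{\mathfrak{v}}|E_{\mathfrak{v}}|$; once that is in hand, everything else is routine binomial algebra and the reuse of the degree formula established earlier.
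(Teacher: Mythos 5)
Your proposal is correct, and it reaches the first expression in \eqref{eq:size_of_edge_set_L_k_short} by a genuinely different route at the one step that matters. Both you and the paper start from the handshaking lemma and the decomposition $\mathrm{deg}_k(\mathfrak{v}) = k\bar{d} - 2|E_{\mathfrak{v}}|$, so everything hinges on evaluating $\sum_{\mathfrak{v}\in\mathfrak{V}_k}|E_{\mathfrak{v}}|$ (equivalently $\sum_{\mathfrak{v}}\sum_{v\in\mathfrak{v}}\mathrm{deg}^{L_{\mathfrak{v}}}(v)$). The paper fixes a vertex $v$ and counts, for each $l$, the configurations placing $l$ of the remaining $k-1$ particles among the $\bar{d}$ neighbors of $v$ and the rest elsewhere, obtaining $s_v=\sum_{l}\binom{\bar{d}}{l}\binom{\bar{n}-1-\bar{d}}{k-1-l}\,l$, and then needs an index shift plus the Zhu--Vandermonde identity to collapse this to $\bar{d}\binom{\bar{n}-2}{k-2}$. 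You instead double count incidences $(e,\mathfrak{v})$ with $e\in E$ and both endpoints of $e$ in $\mathfrak{v}$: each of the $|E|=\bar{n}\bar{d}/2$ edges lies in exactly $\binom{\bar{n}-2}{k-2}$ of the $k$-subsets, which delivers $\sum_{\mathfrak{v}}|E_{\mathfrak{v}}|=\tfrac{\bar{n}\bar{d}}{2}\binom{\bar{n}-2}{k-2}$ in one line with no hypergeometric identity at all. Your version is shorter and more elementary; the paper's version buys the explicit intermediate formula \eqref{eq:size_of_edge_set_L_k} with the sum over $l$, which records the distribution of particles over the neighborhood of a fixed vertex and is of some independent interest, but as a proof of the stated identity your edge-centric double count is the cleaner argument. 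The closing algebra via $\bar{n}\binom{\bar{n}-2}{k-2}=\tfrac{k(k-1)}{\bar{n}-1}\binom{\bar{n}}{k}$ and $1-\tfrac{k-1}{\bar{n}-1}=\tfrac{\bar{n}-k}{\bar{n}-1}$ is correct and matches what Corollary \ref{cor:size_edge_set_E_k_comp_size_E} and Proposition \ref{prop:average_degree_quotient_upper_bound} do with the same quantities.
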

\begin{proof}
	The first claim follows from drawing without replacement. For the edge set, by the formula for the degree of a $\mathfrak{v}\in\mathfrak{V}_k$, we obtain
	\begin{align*}
		2|\mathfrak{E}_k| &= \sum_{\mathfrak{v}\in\mathfrak{V}_k}\mathrm{deg}_k(\mathfrak{v})= \sum_{\mathfrak{v}\in\mathfrak{V}_k}k\bar{d}-\sum_{\mathfrak{v}\in\mathfrak{V}_k}\sum_{v\in\mathfrak{v}}\mathrm{deg}^{L_{\mathfrak{v}}}(v)\\
						  &= k\bar{d}\binom{\bar{n}}{k}-\sum_{v\in V}\sum_{\mathfrak{v}\in\mathfrak{V}_k}\mathrm{deg}^{L_{\mathfrak{v}}}(v)\mathbbm{1}_{v\in\mathfrak{v}}.
	\end{align*}
	Fixing a $v\in V$ we obtain for $s_v = \sum_{\mathfrak{v}\in\mathfrak{V}_k}\mathrm{deg}^{L_{\mathfrak{v}}}(v)\mathbbm{1}_{v\in\mathfrak{v}}$ that we have to redistribute the $k-1$ remaining particles among the $\bar{d}$ neighbors and the $\bar{n}-1-\bar{d}$ non-adjacent vertices. Assuming that $l$ particles are in the neighborhood of $\mathfrak{v}$ and one in $v$, there are $\binom{\bar{d}}{l}\binom{\bar{n}-1-\bar{d}}{k-1-l}$ ways to redistribute the particles in the aforementioned way and each contributes $l$ to to the sum $s_v$. Furthermore, $l$ ranges from $0$ to $\min\{\bar{d},k-1\}$ such that
	\begin{equation*}
		s_v = \sum_{l=0}^{\min\{\bar{d},k-1\}}\binom{\bar{d}}{l}\binom{\bar{n}-1-\bar{d}}{k-1-l}l 
	\end{equation*}		 
	independently of $v$. Consequently, we obtain the identity
	\begin{equation}\label{eq:size_of_edge_set_L_k}
			|\mathfrak{E}_k|=\dfrac{1}{2}\left(\bar{d}k\binom{\bar{n}}{k}-\bar{n}\sum_{l=1}^{\min\{k-1,\bar{d}\}}\binom{\bar{d}}{l}\binom{\bar{n}-1-\bar{d}}{k-1-l}l\right).
	\end{equation}
	For the second term, we remark that $\binom{\bar{d}}{l}=0$ for $l>\bar{d}$ and $\binom{\bar{n}-1-\bar{d}}{k-1-l}=0$ for $l>k-1$. Consequently, in the sums all summands with $l>\min\{k-1,\bar{d}\}$ are zero and, therefore,
	\begin{equation*}
		\sum_{l=1}^{k-1}\binom{\bar{d}}{l}\binom{\bar{n}-1-\bar{d}}{k-1-l}l=\sum_{l=1}^{\min\{k-1,\bar{d}\}}\binom{\bar{d}}{l}\binom{\bar{n}-1-\bar{d}}{k-1-l}l=\sum_{l=1}^{\bar{d}}\binom{\bar{d}}{l}\binom{\bar{n}-1-\bar{d}}{k-1-l}l.
	\end{equation*}
	From this we can continue with a preliminary observation that
	\begin{align*}
		\sum_{l=1}^{k-1}\binom{\bar{d}}{l}\binom{\bar{n}-1-\bar{d}}{k-1-l}l  &= \bar{d}\sum_{l=1}^{k-1}\binom{\bar{d}-1}{l-1}\binom{\bar{n}-1-\bar{d}}{k-1-l}.
	\end{align*}
	We omit $\bar{d}$ in what follows and prove only equality of the remaining terms. Note first that
	\begin{align*}
		\sum_{l=1}^{k-1}\binom{\bar{d}-1}{l-1}\binom{\bar{n}-1-\bar{d}}{k-1-l} & = \sum_{l=1}^{k-1}\binom{\bar{d}-1}{l-1}\binom{\bar{n}-2-(\bar{d}-1)}{k-2-(l-1)}\\
		& = \sum_{l=0}^{k-2}\binom{\bar{d}-1}{l}\binom{\bar{n}-2-(\bar{d}-1)}{k-2-l}
	\end{align*}
	such that we can apply the Zhu–Vandermonde identity to obtain
	\begin{align*}
		\sum_{l=1}^{k-1}\binom{\bar{d}-1}{l-1}\binom{\bar{n}-1-\bar{d}}{k-1-l} =\binom{\bar{n}-2}{k-2}
	\end{align*}
	which finishes the proof.
\end{proof}
Hence, we obtain an equality for the size of the vertex sets for $k$ and $\bar{n}-k$ as well as for the size of the edge sets in the two cases. Indeed, it is not obvious that the value given in equation \ref{eq:size_of_edge_set_L_k_short} is an integer. When checking this property one arrives at the following conclusion.
\begin{corollary}\label{cor:size_edge_set_E_k_comp_size_E}
	Let $L=(V,E)$ be a $\bar{d}$-regular graph. Consider the graph $\mathfrak{L}_k$ for $k\in\{1,\hdots,\bar{n}-1\}$. Then, 
	\begin{equation}\label{eq:size_of_edge_set_L_k_comp_L}
		|\mathfrak{E}_k|=\binom{\bar{n}-2}{k-1}|E|.
	\end{equation}
\end{corollary}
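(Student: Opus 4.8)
The plan is to start from the closed form for $|\mathfrak{E}_k|$ already established in Proposition \ref{prop:size_edge_set_E_k} and reduce the claimed identity \eqref{eq:size_of_edge_set_L_k_comp_L} to a purely binomial one, using nothing more than the handshake lemma and Pascal's rule. First I would invoke that for a $\bar{d}$-regular graph on $\bar{n}$ vertices the number of edges is $|E| = \frac{\bar{n}\bar{d}}{2}$, so that the target equality $|\mathfrak{E}_k| = \binom{\bar{n}-2}{k-1}|E|$ is equivalent to
\begin{equation*}
	|\mathfrak{E}_k| = \frac{\bar{n}\bar{d}}{2}\binom{\bar{n}-2}{k-1}.
\end{equation*}
It then suffices to check that the right-hand side agrees with either of the two expressions for $|\mathfrak{E}_k|$ appearing in \eqref{eq:size_of_edge_set_L_k_short}.

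Working with the first expression, I would factor the common $\frac{\bar{d}}{2}$ out of both sides, so that the claim collapses to the $\bar{d}$-free identity
\begin{equation*}
	k\binom{\bar{n}}{k} - \bar{n}\binom{\bar{n}-2}{k-2} = \bar{n}\binom{\bar{n}-2}{k-1},
\end{equation*}
i.e.\ $k\binom{\bar{n}}{k} = \bar{n}\left(\binom{\bar{n}-2}{k-1}+\binom{\bar{n}-2}{k-2}\right)$. Applying Pascal's rule to the bracketed sum turns the right-hand side into $\bar{n}\binom{\bar{n}-1}{k-1}$, and the whole statement reduces to the elementary absorption identity $k\binom{\bar{n}}{k} = \bar{n}\binom{\bar{n}-1}{k-1}$, which follows immediately from $k\cdot\frac{\bar{n}!}{k!(\bar{n}-k)!} = \bar{n}\cdot\frac{(\bar{n}-1)!}{(k-1)!(\bar{n}-k)!}$. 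Alternatively, one may simplify the second, factored form $k(\bar{n}-k)\binom{\bar{n}}{k}\frac{\bar{d}}{2(\bar{n}-1)}$ directly by writing $k(\bar{n}-k)\binom{\bar{n}}{k} = \frac{\bar{n}!}{(k-1)!(\bar{n}-k-1)!} = \bar{n}(\bar{n}-1)\binom{\bar{n}-2}{k-1}$ and cancelling the factor $\bar{n}-1$; this recovers $\frac{\bar{n}\bar{d}}{2}\binom{\bar{n}-2}{k-1}$ in one line.

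Since this is a corollary of an already-proved proposition, there is no genuine obstacle, only bookkeeping. The only point requiring a little care is respecting the stated convention $\binom{\bar{n}}{z}=0$ for $z\in\mathbb{Z}^-$ in the boundary cases $k=1$ and $k=\bar{n}-1$, where some of the binomial coefficients $\binom{\bar{n}-2}{k-2}$ vanish; I would verify that both the Pascal step and the absorption step remain valid under this convention, so that the final formula holds uniformly over all $k\in\{1,\hdots,\bar{n}-1\}$.
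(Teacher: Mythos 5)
Your proposal is correct, and your ``alternative'' one-line computation --- simplifying the factored form $k(\bar{n}-k)\binom{\bar{n}}{k}\frac{\bar{d}}{2(\bar{n}-1)}$ via $k(\bar{n}-k)\binom{\bar{n}}{k}=\bar{n}(\bar{n}-1)\binom{\bar{n}-2}{k-1}$ --- is precisely the proof given in the paper. Your primary route through Pascal's rule and the absorption identity is an equivalent algebraic rearrangement of the same content, so there is nothing substantively different to compare.
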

\begin{proof}
	We employ equation \ref{eq:size_of_edge_set_L_k_short} to derive the claim.
	\begin{align*}
		|\mathfrak{E}_k|&=k(\bar{n}-k)\binom{\bar{n}}{k}\dfrac{\bar{d}}{2(\bar{n}-1)}=\bar{n}\dfrac{(\bar{n}-2)!}{(k-1)!(\bar{n}-2-(k-1))!}\dfrac{\bar{d}}{2}\\
		&= \binom{\bar{n}-2}{k-1}\dfrac{\bar{n}\bar{d}}{2}=\binom{\bar{n}-2}{k-1}|E|.
	\end{align*}
\end{proof}
On the other hand comparing the result presented in equation \ref{eq:size_of_edge_set_L_k_short} with the size of the edge set $E_{\mathcal{J}}$ of a $\mathcal{J}(\bar{n},k)$ Johnson graph, we obtain that 
\begin{equation*}
	\dfrac{|\mathfrak{E}_k|}{|E_{\mathcal{J}}|}=\dfrac{\bar{d}}{\bar{n}-1}
\end{equation*} 
which is independent of $k$ and only dependent on the underlying graph $L$ through the prescribed degree $\bar{d}$. Based on this remark we can also make the following core observation for the correspondence of the relationship of a regular graph with a complete graph and the associated $\mathfrak{L}_k$ with $\mathcal{J}(\bar{n},k)$.
\begin{proposition}\label{prop:average_degree_quotient_upper_bound}
	Let $L$ be a $\bar{d}$-regular graph on $\bar{n}$ vertices and $k\in\{1,\hdots,\bar{n}-1\}$. Denote by $\mathrm{avg\;deg}(\mathfrak{L}_k)$ the average degree of $\mathfrak{L}_k$, i.e., 
	\begin{equation*}
		\mathrm{avg\;deg}(\mathfrak{L}_k):=\dfrac{1}{|\mathfrak{V}_k|}\sum_{\mathfrak{v}\in\mathfrak{V}_k}\mathrm{deg}_k(\mathfrak{v}).
	\end{equation*}
	The average degree satisfies
	\begin{equation}\label{eq:expression_quotient_average_degree}
		\dfrac{\mathrm{avg\;deg}(\mathfrak{L}_k)}{k(\bar{n}-k)}=\dfrac{\bar{d}}{\bar{n}-1}\leq 1.
	\end{equation}
\end{proposition}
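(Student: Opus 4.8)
The plan is to reduce the entire statement to the edge-count formula already established in Proposition \ref{prop:size_edge_set_E_k}, together with an elementary bound on the degree of a simple regular graph. First I would invoke the handshaking lemma, valid for any finite graph, which gives $\sum_{\mathfrak{v}\in\mathfrak{V}_k}\mathrm{deg}_k(\mathfrak{v}) = 2|\mathfrak{E}_k|$. By the definition of the average degree this means
\begin{equation*}
	\mathrm{avg\;deg}(\mathfrak{L}_k) = \frac{2|\mathfrak{E}_k|}{|\mathfrak{V}_k|},
\end{equation*}
so the proposition follows as soon as the two known cardinalities are substituted.

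Next I would plug in the two quantities supplied by Proposition \ref{prop:size_edge_set_E_k}, namely $|\mathfrak{V}_k| = \binom{\bar{n}}{k}$ and the closed form $|\mathfrak{E}_k| = k(\bar{n}-k)\binom{\bar{n}}{k}\frac{\bar{d}}{2(\bar{n}-1)}$. Substituting both into the display above, the binomial coefficient $\binom{\bar{n}}{k}$ cancels between numerator and denominator, and the leading factor $2$ cancels against the $\frac{1}{2}$ in the edge formula, leaving $\mathrm{avg\;deg}(\mathfrak{L}_k) = k(\bar{n}-k)\frac{\bar{d}}{\bar{n}-1}$. Dividing through by $k(\bar{n}-k)$, which is strictly positive for $k\in\{1,\ldots,\bar{n}-1\}$ and hence a legitimate divisor, yields the claimed identity $\frac{\mathrm{avg\;deg}(\mathfrak{L}_k)}{k(\bar{n}-k)} = \frac{\bar{d}}{\bar{n}-1}$.

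For the inequality it remains only to observe that $\bar{d} \leq \bar{n}-1$. This is immediate: in a simple graph on $\bar{n}$ vertices every vertex has at most $\bar{n}-1$ neighbours, so a $\bar{d}$-regular simple graph necessarily satisfies $\bar{d}\leq\bar{n}-1$, with equality precisely when $L$ is the complete graph $L_c$. Hence $\frac{\bar{d}}{\bar{n}-1}\leq 1$, completing the argument and, incidentally, recovering the quotient $\frac{\bar{d}}{\bar{n}-1}$ already noted just before the statement as the ratio $|\mathfrak{E}_k|/|E_{\mathcal{J}}|$.

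There is essentially no hard step here: all the combinatorial weight was absorbed into the Zhu--Vandermonde computation behind Proposition \ref{prop:size_edge_set_E_k}, and the only points requiring attention are recognising that the proposition is a restatement of that edge count through the handshaking lemma, and invoking the trivial maximum-degree bound $\bar{d}\leq\bar{n}-1$. The single subtlety to guard against is dividing by $k(\bar{n}-k)$: one should note that this factor is nonzero on the admissible range $k\in\{1,\ldots,\bar{n}-1\}$, so the division producing the stated ratio is always valid.
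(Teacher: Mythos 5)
Your proof is correct and follows essentially the same route as the paper's: both apply the handshaking lemma to write $\mathrm{avg\;deg}(\mathfrak{L}_k)=2|\mathfrak{E}_k|/|\mathfrak{V}_k|$, substitute the edge count from Proposition \ref{prop:size_edge_set_E_k}, and conclude with $\bar{d}\leq\bar{n}-1$. The only cosmetic difference is that you plug in the already-simplified form $k(\bar{n}-k)\binom{\bar{n}}{k}\frac{\bar{d}}{2(\bar{n}-1)}$, whereas the paper substitutes the unsimplified expression and carries out the cancellation within this proof.
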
 
\begin{proof}
	We start by recalling that the average degree of any graph is given by the quotient of the size of its edge set times two and the size of its vertex set. Therefore,
	\begin{align*}
		\dfrac{\mathrm{avg\;deg}(\mathfrak{L}_k)}{k(\bar{n}-k)} &= \dfrac{2|\mathfrak{E}_k|}{|\mathfrak{V}_k|\,k(\bar{n}-k)}=\dfrac{\bar{d}}{\bar{n}-k} - \dfrac{\bar{d}}{\binom{\bar{n}-1}{k-1}}\dfrac{\binom{\bar{n}-2}{k-2}}{\bar{n}-k}\\
		&= \dfrac{\bar{d}}{\bar{n}-k}\left(1-\dfrac{k-1}{\bar{n}-1}\right) = \dfrac{\bar{d}}{\bar{n}-1}.
	\end{align*}
	The second claim follows from $\bar{d}\leq \bar{n}-1$ and equality if and only if $L$ is a complete graph.
\end{proof}
The average degree is not the only object defined by the degree sequence we are interested in. We turn to the properties of the degree sequence of $\mathfrak{L}_k$.
\subsection{The degree sequence of $\mathfrak{L}_k$}
Indeed, for regular graphs we obtain a range of structural results on $\mathfrak{L}_k$ which go beyond connectedness and focus on the local properties of the vertices. In particular the degree of a vertex plays a central role defining in later Sections the transition probabilities and stationary distribution of Markov chains on $\mathfrak{L}_k$ induced by a variety of exclusion processes.
\begin{proposition}\label{prop:equivalence_equality_degree_equality_degree_subgraph}
	Let $k\in\{1,\hdots,\bar{n}-1\}$ and $\langle\mathfrak{v},\mathfrak{w}\rangle\in\mathfrak{E}_k$ and write $\mathfrak{v}\triangle\mathfrak{w}=\{v,w\}$. Then $\mathrm{deg}_k(\mathfrak{v})=\mathrm{deg}_k(\mathfrak{w})$ if and only if $\mathrm{deg}^{L_{\mathfrak{v}}}(v)=\mathrm{deg}^{L_{\mathfrak{w}}}(w)$.\\
	Moreover, any $\mathfrak{v},\mathfrak{w}\in\mathfrak{V}_k$ with $\langle\mathfrak{v},\mathfrak{w}\rangle\in\mathfrak{E}_k$ and $\mathfrak{v}\triangle\mathfrak{w}=\{v,w\}$ satisfy
	\begin{equation}
		\mathrm{deg}^{L_{\mathfrak{v}}}(v) + \mathrm{deg}^{L_{V\backslash\mathfrak{w}}}(v) =\bar{d}-1.
	\end{equation}
\end{proposition}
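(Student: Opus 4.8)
The plan is to reduce both claims to elementary neighbour counts that exploit the fact that $\mathfrak{v}$ and $\mathfrak{w}$ differ in exactly one vertex. Since $\langle\mathfrak{v},\mathfrak{w}\rangle\in\mathfrak{E}_k$ with $\mathfrak{v}\triangle\mathfrak{w}=\{v,w\}$, I would first write $S:=\mathfrak{v}\cap\mathfrak{w}$, so that $|S|=k-1$, $\mathfrak{v}=S\cup\{v\}$, $\mathfrak{w}=S\cup\{w\}$, with $v\notin\mathfrak{w}$, $w\notin\mathfrak{v}$ and $\langle v,w\rangle\in E$. All of the work then amounts to tracking how the single distinguished vertex interacts with the common set $S$.

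For the equivalence, I would start from the degree formula already invoked in the proof of Proposition \ref{prop:size_edge_set_E_k}, namely $\mathrm{deg}_k(\mathfrak{v})=k\bar{d}-\sum_{u\in\mathfrak{v}}\mathrm{deg}^{L_{\mathfrak{v}}}(u)=k\bar{d}-2|E_{\mathfrak{v}}|$, where the last equality is the handshake lemma for the induced subgraph $L_{\mathfrak{v}}$ and regularity has been used to replace $\mathrm{deg}(u)$ by $\bar{d}$. Because both $\mathfrak{v}$ and $\mathfrak{w}$ contain the common set $S$, the induced edge counts split as $|E_{\mathfrak{v}}|=|E_S|+\mathrm{deg}^{L_{\mathfrak{v}}}(v)$ and $|E_{\mathfrak{w}}|=|E_S|+\mathrm{deg}^{L_{\mathfrak{w}}}(w)$, where $E_S$ is the edge set of $L_S$ and $\mathrm{deg}^{L_{\mathfrak{v}}}(v)$ counts precisely the neighbours of $v$ lying in $S$. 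Subtracting, the term $|E_S|$ cancels and I obtain $\mathrm{deg}_k(\mathfrak{v})-\mathrm{deg}_k(\mathfrak{w})=-2\bigl(\mathrm{deg}^{L_{\mathfrak{v}}}(v)-\mathrm{deg}^{L_{\mathfrak{w}}}(w)\bigr)$, from which the equivalence $\mathrm{deg}_k(\mathfrak{v})=\mathrm{deg}_k(\mathfrak{w})\iff \mathrm{deg}^{L_{\mathfrak{v}}}(v)=\mathrm{deg}^{L_{\mathfrak{w}}}(w)$ is immediate.

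For the second identity I would partition the $\bar{d}$ neighbours $N_v$ of $v$ in $L$ according to membership in $\mathfrak{w}$ versus its complement, giving $\bar{d}=|N_v\cap\mathfrak{w}|+\mathrm{deg}^{L_{V\backslash\mathfrak{w}}}(v)$, the second summand being the degree of $v$ in the subgraph induced on $V\backslash\mathfrak{w}$ (legitimate since $v\in V\backslash\mathfrak{w}$). As $\mathfrak{w}=S\cup\{w\}$ and $v$ is adjacent to $w$, the neighbours of $v$ inside $\mathfrak{w}$ are exactly its neighbours in $S$ together with $w$, so $|N_v\cap\mathfrak{w}|=\mathrm{deg}^{L_{\mathfrak{v}}}(v)+1$. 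Substituting gives $\bar{d}=\mathrm{deg}^{L_{\mathfrak{v}}}(v)+1+\mathrm{deg}^{L_{V\backslash\mathfrak{w}}}(v)$, which rearranges to the claimed $\mathrm{deg}^{L_{\mathfrak{v}}}(v)+\mathrm{deg}^{L_{V\backslash\mathfrak{w}}}(v)=\bar{d}-1$.

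I do not expect a genuine obstacle; the whole argument is bookkeeping. The one point demanding care is the single edge $\langle v,w\rangle$: it is the source of the $+1$ in the second count and must be kept separate from the neighbours of $v$ in $S$, so that it is neither omitted nor double-counted. Everything else follows from the clean decomposition $\mathfrak{v}=S\cup\{v\}$, $\mathfrak{w}=S\cup\{w\}$ and the regularity hypothesis.
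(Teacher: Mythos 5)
Your proof is correct. For the first claim you take essentially the same route as the paper: both arguments isolate the contribution of the single swapped vertex against the common part $S=\mathfrak{v}\cap\mathfrak{w}$ (the paper phrases this via the recursion $\mathrm{deg}_k(\mathfrak{v})=\mathrm{deg}_{k-1}(\mathfrak{v}\backslash\{v\})+\bar{d}-2\,\mathrm{deg}^{L_{\mathfrak{v}}}(v)$, you phrase it via $|E_{\mathfrak{v}}|=|E_S|+\mathrm{deg}^{L_{\mathfrak{v}}}(v)$; these are the same bookkeeping), and both land on $\mathrm{deg}_k(\mathfrak{v})-\mathrm{deg}_k(\mathfrak{w})=2\bigl(\mathrm{deg}^{L_{\mathfrak{w}}}(w)-\mathrm{deg}^{L_{\mathfrak{v}}}(v)\bigr)$. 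Where you genuinely add something is the second identity: the paper's printed proof of that part simply repeats the first paragraph verbatim and never establishes $\mathrm{deg}^{L_{\mathfrak{v}}}(v)+\mathrm{deg}^{L_{V\backslash\mathfrak{w}}}(v)=\bar{d}-1$, whereas your partition of $N_v$ into $N_v\cap\mathfrak{w}$ and $N_v\cap(V\backslash\mathfrak{w})$, together with the observation that $N_v\cap\mathfrak{w}$ consists of the neighbours of $v$ in $S$ plus the single vertex $w$ (using $\langle v,w\rangle\in E$ and $v\notin\mathfrak{w}$), is a complete and correct argument. Your care in isolating the edge $\langle v,w\rangle$ as the source of the $+1$ is exactly the point the paper's proof fails to record.
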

\begin{proof}
	First of all, note that $\mathfrak{v}\backslash\{v\}=\mathfrak{w}\backslash\{w\}$ and, hence, $\mathrm{deg}_{k-1}(\mathfrak{v}\backslash\{v\})=\mathrm{deg}_{k-1}(\mathfrak{w}\backslash\{w\})$. Furthermore, removing $v$ from $\mathfrak{v}$ removes $\mathrm{deg}^{L_{\mathfrak{v}}}(v)$ edges from the induced subgraph $L_{\mathfrak{v}}$. Consequently, we obtain $\mathrm{deg}_k(\mathfrak{v})=\mathrm{deg}_{k-1}(\mathfrak{v}\backslash\{v\})+\bar{d} -2\,\mathrm{deg}^{L_{\mathfrak{v}}}(v)$. Equivalent claims are satisfied by $\mathfrak{w}$ and $w$. Hence, we can conclude
	\begin{align*}
		\mathrm{deg}_k(\mathfrak{v})-\mathrm{deg}_k(\mathfrak{w}) &= \mathrm{deg}_{k-1}(\mathfrak{v}\backslash\{v\}) - 2\,\mathrm{deg}^{L_{\mathfrak{v}}}(v) - (\mathrm{deg}_{k-1}(\mathfrak{w}\backslash\{w\}) - 2\,\mathrm{deg}^{L_{\mathfrak{w}}}(w))\\
		&= 2\,(\mathrm{deg}^{L_{\mathfrak{w}}}(w)-\mathrm{deg}^{L_{\mathfrak{v}}}(v))
	\end{align*}		 
	which is equivalent to the first claim.\\
	Secondly, note that $\mathfrak{v}\backslash\{v\}=\mathfrak{w}\backslash\{w\}$ and, hence, $\mathrm{deg}_{k-1}(\mathfrak{v}\backslash\{v\})=\mathrm{deg}_{k-1}(\mathfrak{w}\backslash\{w\})$. Again, removing $v$ from $\mathfrak{v}$ removes $\mathrm{deg}^{L_{\mathfrak{v}}}(v)$ edges from the induced subgraph $L_{\mathfrak{v}}$. Consequently, we obtain $\mathrm{deg}_k(\mathfrak{v})=\mathrm{deg}_{k-1}(\mathfrak{v}\backslash\{v\})+\bar{d} -2\,\mathrm{deg}^{L_{\mathfrak{v}}}(v)$. Equivalent claims are satisfied by $\mathfrak{w}$ and $w$. Hence, we can conclude
	\begin{align*}
		\mathrm{deg}_k(\mathfrak{v})-\mathrm{deg}_k(\mathfrak{w}) &= \mathrm{deg}_{k-1}(\mathfrak{v}\backslash\{v\}) - 2\,\mathrm{deg}^{L_{\mathfrak{v}}}(v) - (\mathrm{deg}_{k-1}(\mathfrak{w}\backslash\{w\}) - 2\,\mathrm{deg}^{L_{\mathfrak{w}}}(w))\\
		&= 2\,(\mathrm{deg}^{L_{\mathfrak{w}}}(w)-\mathrm{deg}^{L_{\mathfrak{v}}}(v))
	\end{align*}		 
	which is equivalent to the claim.
\end{proof}
Proposition \ref{prop:equivalence_equality_degree_equality_degree_subgraph} gives a perspective on the graph $\mathfrak{L}_k$ that in fact the $k$ vertex induced sub-graphs of $L$ are the defining objects. While their analysis is a classically difficult subject, see again for example \cite{Khuller2009OnFD}, \cite{Charalampos2015} and \cite{Feige2001}, we obtain nonetheless properties based on construction of $\mathfrak{L}_k$. 
\begin{corollary}\label{cor:even_difference_of_degrees}
	Let $k\in\{1,\hdots,\bar{n}-1\}$ and $\mathfrak{v},\mathfrak{w}\in\mathfrak{V}_k$. Then, $\mathrm{deg}_k(\mathfrak{v})-\mathrm{deg}_k(\mathfrak{w})$ is an even number.
\end{corollary}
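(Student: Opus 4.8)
The plan is to deduce the parity statement for an arbitrary pair $\mathfrak{v},\mathfrak{w}\in\mathfrak{V}_k$ from the adjacent case, where Proposition \ref{prop:equivalence_equality_degree_equality_degree_subgraph} already displays the degree difference as twice an integer. First I would invoke Theorem \ref{thm:L_frak_k_connected}: since $L$ is connected, $\mathfrak{L}_k$ is connected, so there is a path $\mathfrak{v}=\mathfrak{u}_0,\mathfrak{u}_1,\hdots,\mathfrak{u}_m=\mathfrak{w}$ with $\langle\mathfrak{u}_i,\mathfrak{u}_{i+1}\rangle\in\mathfrak{E}_k$ for each $i$. Writing $\mathfrak{u}_i\triangle\mathfrak{u}_{i+1}=\{v_i,w_i\}$, Proposition \ref{prop:equivalence_equality_degree_equality_degree_subgraph} gives $\mathrm{deg}_k(\mathfrak{u}_i)-\mathrm{deg}_k(\mathfrak{u}_{i+1})=2\bigl(\mathrm{deg}^{L_{\mathfrak{u}_{i+1}}}(w_i)-\mathrm{deg}^{L_{\mathfrak{u}_i}}(v_i)\bigr)$, which is even.

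The second step is to telescope. I would write
\begin{equation*}
	\mathrm{deg}_k(\mathfrak{v})-\mathrm{deg}_k(\mathfrak{w})=\sum_{i=0}^{m-1}\bigl(\mathrm{deg}_k(\mathfrak{u}_i)-\mathrm{deg}_k(\mathfrak{u}_{i+1})\bigr),
\end{equation*}
so that the total difference is a sum of even integers and is therefore itself even, which is exactly the claim.

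Alternatively, and more directly, I would bypass connectivity entirely by exhibiting the parity as an invariant. Recalling from the proof of Proposition \ref{prop:size_edge_set_E_k} that each $v\in\mathfrak{v}$ contributes $\bar{d}-\mathrm{deg}^{L_{\mathfrak{v}}}(v)$ edges leaving $\mathfrak{v}$, together with the handshake identity $\sum_{v\in\mathfrak{v}}\mathrm{deg}^{L_{\mathfrak{v}}}(v)=2|E_{\mathfrak{v}}|$ on the induced sub-graph, one obtains
\begin{equation*}
	\mathrm{deg}_k(\mathfrak{v})=k\bar{d}-2|E_{\mathfrak{v}}|\equiv k\bar{d}\pmod 2,
\end{equation*}
a residue independent of $\mathfrak{v}$; hence $\mathrm{deg}_k(\mathfrak{v})-\mathrm{deg}_k(\mathfrak{w})=2\bigl(|E_{\mathfrak{w}}|-|E_{\mathfrak{v}}|\bigr)$ is even. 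In truth there is no serious obstacle here: the only point requiring a moment of care in the first route is guaranteeing a connecting path (supplied by Theorem \ref{thm:L_frak_k_connected}), while in the second route it is the bookkeeping in the handshake identity; both are routine, so I would favour the direct invariant computation for its brevity and its independence from the global connectivity result.
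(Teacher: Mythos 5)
Both of your routes are correct. The first one is precisely the paper's proof: it handles the adjacent case via the identity $\mathrm{deg}_k(\mathfrak{v})-\mathrm{deg}_k(\mathfrak{w})=2\,(\mathrm{deg}^{L_{\mathfrak{w}}}(w)-\mathrm{deg}^{L_{\mathfrak{v}}}(v))$ from the proof of Proposition \ref{prop:equivalence_equality_degree_equality_degree_subgraph} and then extends to arbitrary pairs by taking a path supplied by Theorem \ref{thm:L_frak_k_connected} and telescoping (what the paper calls a ``bootstrap argument''); your version merely makes the telescoping sum explicit. Your second route is genuinely different and, as you say, preferable: the closed form $\mathrm{deg}_k(\mathfrak{v})=k\bar{d}-2|E_{\mathfrak{v}}|$ shows at once that every degree is congruent to $k\bar{d}$ modulo $2$, so the difference of any two degrees is even without appealing to connectivity of $\mathfrak{L}_k$ (or of $L$) at all --- only regularity of $L$ is used, which is the standing hypothesis of this section anyway. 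The paper in fact arrives at this same invariant later, in Proposition \ref{prop:degree_formula_in_tilde_H_k_r} and Lemma \ref{lem:isom_of_bipartites_implies_same_degree}, where the statement ``$\mathrm{deg}_k(\mathfrak{v})$ is even if and only if $k\bar{d}$ is even'' is deduced from the degree formula; your observation is that this stronger fact could have been proved first and the corollary read off from it, trading the global connectivity result for a purely local counting identity.
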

\begin{proof}
	Consider first $\langle\mathfrak{v},\mathfrak{w}\rangle\in\mathfrak{E}_k$. Then, by the proof of Proposition \ref{prop:equivalence_equality_degree_equality_degree_subgraph} we have $\mathrm{deg}_k(\mathfrak{v})-\mathrm{deg}_k(\mathfrak{w})=2\,(\mathrm{deg}^{L_{\mathfrak{w}}}(w)-\mathrm{deg}^{L_{\mathfrak{v}}}(v))$. For arbitrary $\mathfrak{v},\mathfrak{w}\in\mathfrak{V}_k$ we can construct a path from $\mathfrak{v}$ to $\mathfrak{w}$ by Proposition \ref{prop:L_frak_k_connected} and for any segment the difference of degrees is even. Hence, by a bootstrap argument also $\mathrm{deg}_k(\mathfrak{v})-\mathrm{deg}_k(\mathfrak{w})$ is even.
\end{proof}
We investigate in what follows the link between the edges in vertex induced sub-graphs and the degree of a vertex in $\mathfrak{L}_k$.
 	\begin{proposition}\label{prop:degree_formula_in_tilde_H_k_r}
	Let $L$ be a $\bar{d}$-regular graph, $\mathfrak{v}\in\mathfrak{L}_k$ and denote by $L_{\mathfrak{v}}= (\mathfrak{v},E_{\mathfrak{v}})$ the vertex induced sub-graph of $L$. Then
	\begin{equation}
		\mathrm{deg}_k(\mathfrak{v})= k\cdot \bar{d} - 2|E_{\mathfrak{v}}|.
	\end{equation}
	Additionally, denote by $L_{\mathrm{min};k}=(V_{\mathrm{min};k}, E_{\mathrm{min};k})$ a least dense vertex induced sub-graph on $k$ vertices of $L$ and by $L_{\mathrm{max};k}=(V_{\mathrm{max};k}, E_{\mathrm{max};k})$ a densest vertex induced sub-graph on $k$ vertices. Then
	\begin{align*}
		\min_{\mathfrak{v} \in \mathfrak{V}_k}\mathrm{deg}_k(\mathfrak{v}) &= k\cdot\bar{d} - 2|E_{\mathrm{max};k}|,\\
		\max_{\mathfrak{v} \in \mathfrak{V}_k}\mathrm{deg}_k(\mathfrak{v}) &= k\cdot\bar{d} - 2|E_{\mathrm{min};k}|.
	\end{align*}
\end{proposition}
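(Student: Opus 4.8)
The plan is to count the neighbors of a fixed configuration $\mathfrak{v}$ directly from the edge structure of $L$. By Definition \ref{def:mathfrak_L_k}, a vertex $\mathfrak{w}\in\mathfrak{V}_k$ is adjacent to $\mathfrak{v}$ precisely when $\mathfrak{v}\triangle\mathfrak{w}=\{v,w\}$ for some edge $\langle v,w\rangle\in E$; since $|\mathfrak{v}|=|\mathfrak{w}|=k$, such an edge must have $v\in\mathfrak{v}$ and $w\notin\mathfrak{v}$. First I would argue that the assignment $\mathfrak{w}\mapsto\mathfrak{v}\triangle\mathfrak{w}$ is a bijection between the neighbors of $\mathfrak{v}$ in $\mathfrak{L}_k$ and the edges of $L$ having exactly one endpoint in $\mathfrak{v}$ (the boundary edges of $\mathfrak{v}$): the symmetric difference recovers the pair $(v,w)$ uniquely, because $v$ is forced to lie in $\mathfrak{v}$ and $w$ outside it, and conversely every boundary edge $\langle v,w\rangle$ yields the admissible neighbor $(\mathfrak{v}\setminus\{v\})\cup\{w\}$.

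It then remains to count boundary edges by a handshake argument. I would sum the $L$-degrees over $\mathfrak{v}$: because $L$ is $\bar d$-regular, $\sum_{v\in\mathfrak{v}}\deg_L(v)=k\bar d$. Each edge internal to the induced subgraph $L_{\mathfrak{v}}$ contributes two to this sum while each boundary edge contributes one, so $k\bar d=2|E_{\mathfrak{v}}|+(\text{number of boundary edges})$. Combining this with the bijection yields $\mathrm{deg}_k(\mathfrak{v})=k\bar d-2|E_{\mathfrak{v}}|$, which is the first formula. This is consistent with, and could alternatively be derived from, the recursion $\mathrm{deg}_k(\mathfrak{v})=\mathrm{deg}_{k-1}(\mathfrak{v}\setminus\{v\})+\bar d-2\,\mathrm{deg}^{L_{\mathfrak{v}}}(v)$ already exploited in the proof of Proposition \ref{prop:equivalence_equality_degree_equality_degree_subgraph}.

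For the extremal identities I would simply observe that $k\bar d$ is a constant independent of $\mathfrak{v}$, so $\mathfrak{v}\mapsto\mathrm{deg}_k(\mathfrak{v})$ is a strictly decreasing affine function of $|E_{\mathfrak{v}}|$. Minimizing the degree over $\mathfrak{V}_k$ is therefore equivalent to maximizing $|E_{\mathfrak{v}}|$, which by definition is attained on a densest $k$-vertex induced subgraph $L_{\mathrm{max};k}$, giving $\min_{\mathfrak{v}}\mathrm{deg}_k(\mathfrak{v})=k\bar d-2|E_{\mathrm{max};k}|$; symmetrically, maximizing the degree corresponds to minimizing $|E_{\mathfrak{v}}|$ on a least dense subgraph $L_{\mathrm{min};k}$, giving $\max_{\mathfrak{v}}\mathrm{deg}_k(\mathfrak{v})=k\bar d-2|E_{\mathrm{min};k}|$.

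I do not expect a serious obstacle here: the only point requiring care is the verification that distinct boundary edges produce distinct neighbors (so the count is exact rather than an overcount), which holds because the symmetric difference $\mathfrak{v}\triangle\mathfrak{w}$ is a complete invariant of the edge used. The regularity hypothesis enters only through $\sum_{v\in\mathfrak{v}}\deg_L(v)=k\bar d$; for a non-regular $L$ the same argument would instead give $\mathrm{deg}_k(\mathfrak{v})=\sum_{v\in\mathfrak{v}}\deg_L(v)-2|E_{\mathfrak{v}}|$.
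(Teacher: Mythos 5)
Your proof is correct and follows essentially the same route as the paper: both start from the total $k\bar d$ contributed by the $L$-neighbors of the vertices of $\mathfrak{v}$ and subtract $2$ for each internal edge of $L_{\mathfrak{v}}$, and both then obtain the extremal identities by noting that $k\bar d$ is constant so minimizing the degree is the same as maximizing $|E_{\mathfrak{v}}|$. Your version is slightly more explicit about the bijection between neighbors of $\mathfrak{v}$ in $\mathfrak{L}_k$ and boundary edges of $\mathfrak{v}$ in $L$, which the paper leaves implicit, but the argument is the same.
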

\begin{proof}
	For $\mathfrak{v}\in\mathfrak{V}_k$ consider the vertex induced sub-graph $L_{\mathfrak{v}}=(V_{\mathfrak{v}},E_{\mathfrak{v}})$ of $L$. Any $v\in V$ has $\bar{d}$ neighbors. Hence, the degree of $\mathfrak{v}$ in $\mathfrak{L}_k$ has the form $\mathrm{deg}^r_k(\mathfrak{v}) =k\cdot\bar{d} - m(\mathfrak{v})$ where $m(\mathfrak{v})$ is defined by the constrains given through the definition of $\mathfrak{E}_k$ and remains to be determined. Since $\mathfrak{v}\sim\mathfrak{w}$ if and only if $\mathfrak{v}\triangle\mathfrak{w}=\{v,w\}$ and $\langle v,w\rangle\in E$ any edge in $L$ between $v,v'\in \mathfrak{v}$ reduce the degree by two due to symmetry of the edge $\{v,v'\}$. Consequently, 
	\begin{equation*}
		\mathrm{deg}_k(\mathfrak{v})= k\cdot \bar{d} - 2|E_{\mathfrak{v}}|.
	\end{equation*}
	By the first result we can use the identity 
	\begin{equation}
		\mathrm{deg}_k(\mathfrak{v})= k\cdot \bar{d} - 2|E_{\mathfrak{v}}|
	\end{equation}
	for any $\mathfrak{v}\in\mathfrak{V}_k$. Therefore, 
	\begin{equation}
		\tilde{\delta}_{k;\ast}:=\min_{\mathfrak{v} \in \mathfrak{V}_k}\mathrm{deg}_k(\mathfrak{v}) = k\cdot \bar{d} - 2\max_{\mathfrak{v} \in \mathfrak{V}_k} |E_{\mathfrak{v}}|= k\cdot\bar{d} - 2|E_{\mathrm{max};k}|.
	\end{equation}
	The same argumentation is valid for $\tilde{\delta}^{k;\ast}$. 
\end{proof}
Having established the relation between the $k$ vertex induced sub-graphs and in particular the size of their edge sets with the degrees in $\mathfrak{L}_k$ we can then come back to the question about the degrees for varying $k$ and the fact that the degrees seem to be either all even or all odd. We show this property first before going on to the former. 
\begin{lemma}\label{lem:isom_of_bipartites_implies_same_degree}
Let $k\in\left\{1,\hdots,\bar{n}-1\right\}$ and let $\mathfrak{v}, \mathfrak{w} \in \mathfrak{V}_k$. Then, $L_{\mathfrak{v},\mathfrak{v}^c}\stackrel{\sim}{=} L_{\mathfrak{w},\mathfrak{w}^c}$ implies $\mathrm{deg}_k(\mathfrak{v}) = \mathrm{deg}_k(\mathfrak{w})$.
	Additionally, for any $\mathfrak{v}\in\mathfrak{V}_k$ its degree $\mathrm{deg}_k(\mathfrak{v})$ is even if and only if $k\bar{d}$ is even. Hence, if there is $\mathfrak{v}\in\mathfrak{V}_k$ such that $\mathrm{deg}_k(\mathfrak{v})$ is even it is true that for all $\mathfrak{w}\in\mathfrak{V}_k$ the number $\mathrm{deg}_k(\mathfrak{w})$ is even.
\end{lemma}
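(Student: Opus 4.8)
The plan is to handle the two assertions separately. Both rest on the elementary fact that the degree of a configuration in $\mathfrak{L}_k$ can be read off directly from the bipartite cut graph $L_{\mathfrak{v},\mathfrak{v}^c}$, which I take to be the graph whose edges are exactly the edges of $L$ joining $\mathfrak{v}$ to its complement $\mathfrak{v}^c=V\backslash\mathfrak{v}$.

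For the implication $L_{\mathfrak{v},\mathfrak{v}^c}\stackrel{\sim}{=}L_{\mathfrak{w},\mathfrak{w}^c}\Rightarrow\mathrm{deg}_k(\mathfrak{v})=\mathrm{deg}_k(\mathfrak{w})$, I would first make the identification between neighbours of $\mathfrak{v}$ in $\mathfrak{L}_k$ and crossing edges precise. By Definition \ref{def:mathfrak_L_k} a vertex $\mathfrak{u}$ is adjacent to $\mathfrak{v}$ if and only if $\mathfrak{v}\triangle\mathfrak{u}=\{v,w\}$ with $\langle v,w\rangle\in E$; here necessarily $v\in\mathfrak{v}$ and $w\in\mathfrak{v}^c$, so $\langle v,w\rangle$ is a crossing edge, and conversely every crossing edge $\langle v,w\rangle$ produces the neighbour $(\mathfrak{v}\backslash\{v\})\cup\{w\}$. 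The assignment $\langle v,w\rangle\mapsto(\mathfrak{v}\backslash\{v\})\cup\{w\}$ is a bijection between the edge set of $L_{\mathfrak{v},\mathfrak{v}^c}$ and the neighbourhood of $\mathfrak{v}$, since the symmetric difference recovers the edge. Hence $\mathrm{deg}_k(\mathfrak{v})$ equals the number of edges of $L_{\mathfrak{v},\mathfrak{v}^c}$. As any graph isomorphism preserves the cardinality of the edge set, the hypothesis forces $\mathrm{deg}_k(\mathfrak{v})=\mathrm{deg}_k(\mathfrak{w})$ at once.

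For the parity statement I would simply invoke Proposition \ref{prop:degree_formula_in_tilde_H_k_r}, which records $\mathrm{deg}_k(\mathfrak{v})=k\bar{d}-2|E_{\mathfrak{v}}|$. Since $2|E_{\mathfrak{v}}|$ is even irrespective of $\mathfrak{v}$, the degree $\mathrm{deg}_k(\mathfrak{v})$ has the same parity as $k\bar{d}$. Therefore $\mathrm{deg}_k(\mathfrak{v})$ is even precisely when $k\bar{d}$ is even, a criterion independent of the chosen $\mathfrak{v}$. The concluding sentence is then immediate: the existence of a single $\mathfrak{v}$ of even degree forces $k\bar{d}$ to be even, whence every $\mathfrak{w}\in\mathfrak{V}_k$ has even degree; this dovetails with Corollary \ref{cor:even_difference_of_degrees}, which already guarantees that degree differences are even.

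I do not anticipate a genuine obstacle. The only point deserving care is the bijection in the first part, namely confirming that $\langle v,w\rangle\mapsto(\mathfrak{v}\backslash\{v\})\cup\{w\}$ is both injective and surjective onto the neighbourhood of $\mathfrak{v}$; both properties are immediate from Definition \ref{def:mathfrak_L_k} once one notes that $w$ must lie outside $\mathfrak{v}$. Everything else is a direct appeal to the preservation of edge counts under isomorphism and to the degree formula of Proposition \ref{prop:degree_formula_in_tilde_H_k_r}.
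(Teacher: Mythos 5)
Your proposal is correct and follows essentially the same route as the paper: the first claim via the identity $\mathrm{deg}_k(\mathfrak{v})=|E_{\mathfrak{v},\mathfrak{v}^c}|$ (you merely spell out the bijection the paper takes for granted), and the parity claims via the degree formula $\mathrm{deg}_k(\mathfrak{v})=k\bar{d}-2|E_{\mathfrak{v}}|$ of Proposition \ref{prop:degree_formula_in_tilde_H_k_r} together with Corollary \ref{cor:even_difference_of_degrees}. No gaps.
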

\begin{proof}
	The first claim follows immediately since $\mathrm{deg}_k(\mathfrak{v}) = |E_{\mathfrak{v},\mathfrak{v}^c}|$ and $|E_{\mathfrak{v},\mathfrak{v}^c}|=|E_{\mathfrak{w},\mathfrak{w}^c}|$.\par 
	Drawing from Proposition \ref{prop:degree_formula_in_tilde_H_k_r} we obtain that 
	\begin{equation*}
		\mathrm{deg}_k(\mathfrak{v}) + 2|E_{\mathfrak{v}}|= k\cdot \bar{d}
	\end{equation*}
	which yields the first claim. The second claim follows by Corollary \ref{cor:even_difference_of_degrees}.
\end{proof}
Indeed, the importance of the graph $L_{\mathfrak{v},\mathfrak{v}^c}=((\mathfrak{v},\mathfrak{v}^c), E_{\mathfrak{v},\mathfrak{v}^c}) $ does not only reduce to the fact that it may be a minimizer as discussed in Lemma \ref{lem:isom_of_bipartites_implies_same_degree} but it does in fact characterize both $\mathfrak{v}$ and $\mathfrak{v}^c$ at the same time. Unfortunately, obtaining further identities would amount to solving questions about sizes of sub-graphs of a $\bar{d}$-regular graph $L$. This is a well known problem in various fields but aside from approximations via algorithmic approaches it remains out of reach of being solved. Nonetheless, further qualitative claims can be made about $\mathfrak{L}_k$ which are in particular symmetry based observations.
\subsection{Symmetries $k$-$(\bar{n}-k)$ and under complements}
The first symmetry which is an immediate consequence of the previously established isomorphism between $\mathfrak{L}_k$ and $\mathfrak{L}_{\bar{n}-k}$ concerns the degree set which is an invariant under isomorphisms.
\begin{proposition}\label{prop:def_Dk_equality}
Let $k\in\{1,\hdots,\bar{n}-1\}$ and $D_k :=\{\mathrm{deg}_k^r(v)|v\in \mathfrak{V}_{k}\}$. Let $k,k'\in\left\{1,\hdots,\left\lfloor\dfrac{\bar{n}}{2}\right\rfloor\right\}$, $k'\leq k$. Then $|D_{k'}|\leq |D_k|$. If in turn $k,k'\in\left\{\left\lceil\dfrac{\bar{n}}{2}\right\rceil,\hdots,\bar{n}-1\right\}$, $k\leq k'$ then $|D_k| \geq |D_{k'}|$.
\end{proposition}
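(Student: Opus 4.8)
The plan is to translate the statement about degree \emph{sets} into one about the possible numbers of edges of $k$-vertex induced subgraphs, and then to prove a monotonicity statement for the latter. By Proposition \ref{prop:degree_formula_in_tilde_H_k_r} we have $\mathrm{deg}_k(\mathfrak v)=k\bar d-2|E_{\mathfrak v}|$ for every $\mathfrak v\in\mathfrak V_k$, so the affine map $x\mapsto k\bar d-2x$ is a bijection carrying the set $\mathcal S_k:=\{|E_{\mathfrak v}|:\mathfrak v\in\mathfrak V_k\}$ of attainable induced edge-counts onto $D_k$. In particular $|D_k|=|\mathcal S_k|$, and it is equivalent to prove the stated inequalities for $|\mathcal S_k|$. (Recall also that $\mathrm{deg}_k(\mathfrak v)$ is exactly the size of the edge boundary of $\mathfrak v$, so $D_k$ is the set of attainable cut sizes for a side of size $k$.)

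First I would dispose of the second assertion using the symmetry already established. By Theorem \ref{thm:quotient_graph_isomorphism} we have $\mathfrak L_k\cong\mathfrak L_{\bar n-k}$, and since the degree set is an isomorphism invariant this gives $D_k=D_{\bar n-k}$, hence $|D_k|=|D_{\bar n-k}|$ for every $k$. Thus if $k,k'\in\{\lceil\bar n/2\rceil,\dots,\bar n-1\}$ with $k\le k'$, then $\bar n-k'\le\bar n-k$ both lie in $\{1,\dots,\lfloor\bar n/2\rfloor\}$, and the first assertion applied to the pair $\bar n-k'\le\bar n-k$ yields $|D_{k'}|=|D_{\bar n-k'}|\le|D_{\bar n-k}|=|D_k|$. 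It therefore suffices to treat the first assertion, and by transitivity it is enough to prove the single-step inequality $|\mathcal S_{k-1}|\le|\mathcal S_k|$ for every $k$ with $2\le k\le\lfloor\bar n/2\rfloor$.

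For the single step I would build an injection $\iota:\mathcal S_{k-1}\hookrightarrow\mathcal S_k$ from the operation of adjoining one vertex. If $\mathfrak u\in\mathfrak V_{k-1}$ and $w\notin\mathfrak u$, then $\mathfrak u\cup\{w\}\in\mathfrak V_k$ and $|E_{\mathfrak u\cup\{w\}}|=|E_{\mathfrak u}|+\deg_{\mathfrak u}(w)$, where $\deg_{\mathfrak u}(w)$ is the number of neighbours of $w$ inside $\mathfrak u$ (equivalently the boundary changes by $\bar d-2\deg_{\mathfrak u}(w)$). For $s\in\mathcal S_{k-1}$ let $R_s:=\{\,s+\deg_{\mathfrak u}(w):\mathfrak u\in\mathfrak V_{k-1},\ |E_{\mathfrak u}|=s,\ w\notin\mathfrak u\,\}\subseteq\mathcal S_k$ collect all edge-counts reachable by extending a realiser of $s$. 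The desired injection is precisely a \emph{system of distinct representatives} for the family $(R_s)_{s\in\mathcal S_{k-1}}$, whose existence is governed by Hall's condition $\left|\bigcup_{s\in T}R_s\right|\ge|T|$ for every $T\subseteq\mathcal S_{k-1}$. It should be stressed that a naive count will not work: the set $\mathcal S_k$ need not be an interval of integers (the balanced complete bipartite graphs already exhibit gaps), so $|\mathcal S_k|$ cannot be read off from $\min$ and $\max$ alone and a genuine injection is required.

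The main obstacle is the verification of Hall's condition, and this is where the hypothesis $k\le\lfloor\bar n/2\rfloor$ must enter. The point is that for such $k$ every $\mathfrak u\in\mathfrak V_{k-1}$ has at least $\bar n-(k-1)\ge k+1$ admissible extension vertices, so each $R_s$ is ``wide'' and the families cannot collapse; concretely I would argue that distinct targets are forced by letting the extending vertex range over the large complement, which rules out a set $T$ violating Hall. The cleanest route, if it goes through, is to exhibit a strictly increasing selection $s\mapsto\iota(s)\in R_s$ (for example $\iota(s)=s+\min_{\mathfrak u,w}\deg_{\mathfrak u}(w)$ over a suitably chosen realiser), since strict monotonicity gives injectivity for free; however I do not expect this particular selection to be monotone for every regular $L$, and the general case will likely need the full Hall/SDR argument together with the complement bound $\bar n-k+1\ge k+1$. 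Finally, the necessity of the cutoff at $\lfloor\bar n/2\rfloor$ is explained by the symmetry $|D_k|=|D_{\bar n-k}|$ established in the first step: beyond the midpoint the cardinalities decrease again, so no single-step increase can hold there.
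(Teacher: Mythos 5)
Your reductions are sound and, up to the change of language, identical to the paper's: the affine identification $\mathrm{deg}_k(\mathfrak v)=k\bar d-2|E_{\mathfrak v}|$ turning $D_k$ into the set of attainable induced edge-counts, the disposal of the second assertion via $D_k=D_{\bar n-k}$ from Theorem \ref{thm:quotient_graph_isomorphism}, and the single-step extension $\mathfrak u\mapsto\mathfrak u\cup\{w\}$ with $|E_{\mathfrak u\cup\{w\}}|=|E_{\mathfrak u}|+\deg_{\mathfrak u}(w)$ are exactly the devices the paper uses. The problem is that your argument stops where the real work begins. You correctly observe that passing from ``every $s\in\mathcal S_{k-1}$ reaches some element of $\mathcal S_k$'' to $|\mathcal S_{k-1}|\le|\mathcal S_k|$ requires an injection, i.e.\ a system of distinct representatives for the family $(R_s)_{s\in\mathcal S_{k-1}}$; you correctly note that $\mathcal S_k$ need not be an interval, so no $\min$/$\max$ count suffices; and then you leave Hall's condition unverified and explicitly concede that your candidate monotone selection may fail. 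As written this is not a proof: the one genuinely nontrivial step is missing. Moreover, the heuristic you offer --- each $\mathfrak u$ has $\bar n-k+1\ge k+1$ admissible extensions, so the sets $R_s$ are ``wide'' --- does not obviously deliver Hall's condition. A large complement does not force $|R_s|$ to be large, because many extension vertices can have the same number of neighbours inside $\mathfrak u$: in $K_{m,m}$ a one-sided independent $\mathfrak u$ of size $k-1$ has every extension landing on one of only two values, $0$ or $k-1$, no matter how big $m$ is. So even $|R_s|\ge 2$ for each $s$ is about all this gives, which is far from $\bigl|\bigcup_{s\in T}R_s\bigr|\ge|T|$ for all $T$.

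For what it is worth, the step you decline to take is precisely the step the paper takes without justification: its proof extends a realiser $\hat{\mathfrak v}$ of each $d\in D_{k-1}$ by one vertex, notes that this ``defines at least one degree $d'\in D_k$,'' and immediately concludes $|D_{k-1}|\le|D_k|$. A non-injective correspondence cannot yield that cardinality inequality, and the paper neither fixes the extension vertex canonically nor checks that distinct $d$ produce distinct $d'$; your diagnosis of where the difficulty sits is therefore more careful than the source. But a diagnosis is not a proof: to complete the argument you would need either an explicit injective selection $s\mapsto\iota(s)\in R_s$ (for instance by an interleaving or extremal-realiser construction whose injectivity is actually checked), or a verified SDR/Hall argument, and neither is supplied.
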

\begin{proof}
	Let $k\leq \left\lfloor\dfrac{\bar{n}}{2}\right\rfloor$. Then, for any $\hat{\mathfrak{v}}\in\mathfrak{V}_{k-1}$ we can define the set $\hat{\mathfrak{V}}_k:=\{\mathfrak{v}\in\mathfrak{V}_k|\mathfrak{v}\cap\hat{\mathfrak{v}}=\hat{\mathfrak{v}}\}$ and $|\hat{\mathfrak{V}}_k|= \bar{n}-(k-1)\geq 1$. For any $\mathfrak{v}\in \hat{\mathfrak{V}}_k$ the degree of $\mathfrak{v}$ is given by the proof of Proposition \ref{prop:equivalence_equality_degree_equality_degree_subgraph} by $\mathrm{deg}_k(\mathfrak{v})=\mathrm{deg}_{k-1}(\hat{\mathfrak{v}})+\bar{d} -2\,\mathrm{deg}^{L_{\mathfrak{v}}}(v)$ where $\{v\}:=\mathfrak{v}\backslash\hat{\mathfrak{v}}$. Hence, any degree $d\in D_{k-1}$ defines at least one degree $d'\in\  D_k$. Consequently, we obtain $|D_{k-1}|\leq |D_k|$ and by a bootstrap argument also $|D_{k'}|\leq |D_k|$ for any $k'\leq k$.\par 
	The second claim of the Lemma follows from Proposition \ref{prop:quotient_graph_isomorphism}.
\end{proof}
Note that the inequalities in Proposition \ref{prop:def_Dk_equality} do not have any implications on the inclusion of the degree sets. Consider to this end the graphs depicted in Figure \ref{fig:comparison_cycle_graph_8} and Figure \ref{fig:comparison_regular_3_graph}. While the underlying graph, the cycle graph on $8$ vertices. 
\begin{figure}[!htb]
	\begin{subfigure}{0.32\textwidth}
		\centering
		\includegraphics[scale=0.1]{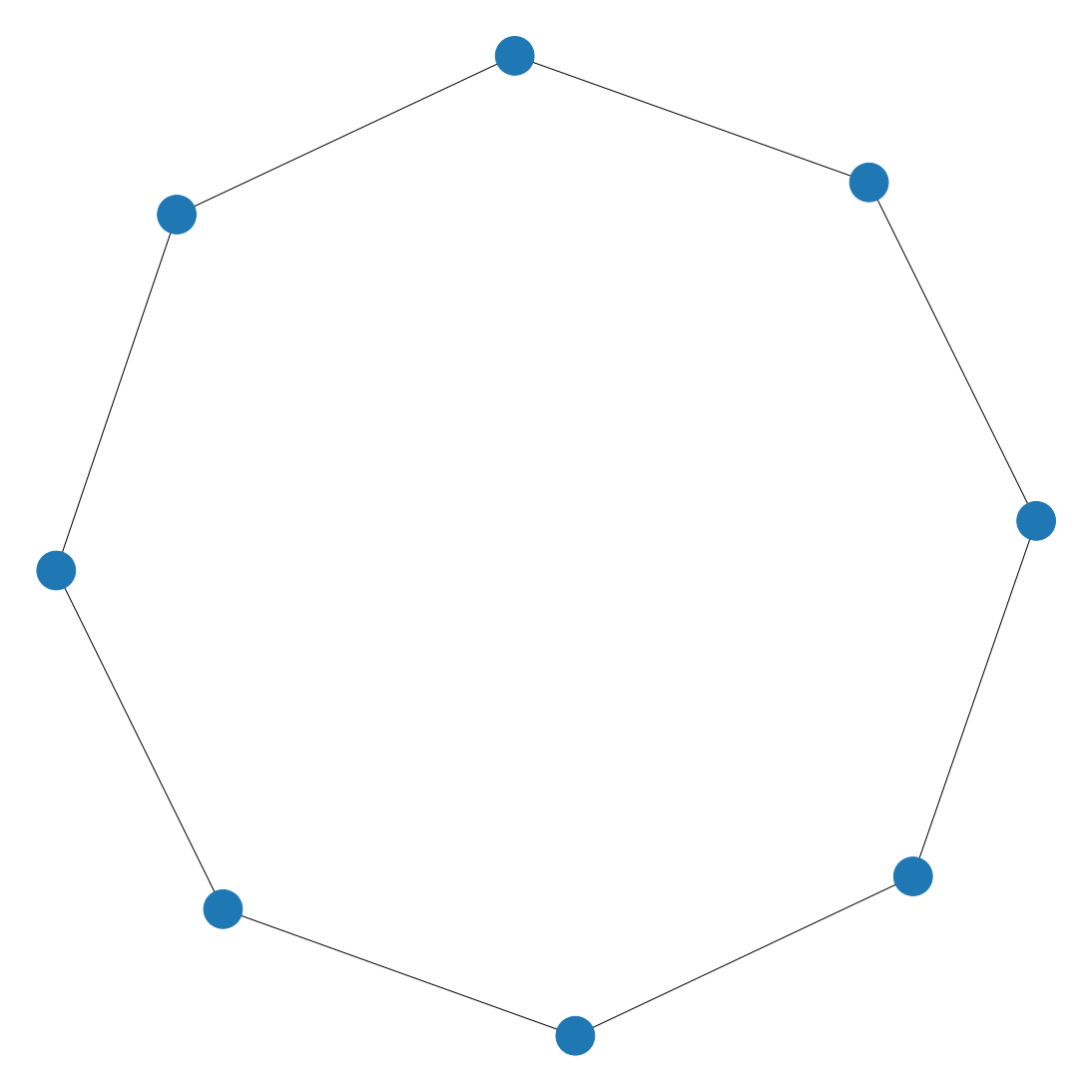}
	\end{subfigure}\hfill
	\begin{subfigure}{0.32\textwidth}
		\centering
		\includegraphics[scale=0.1]{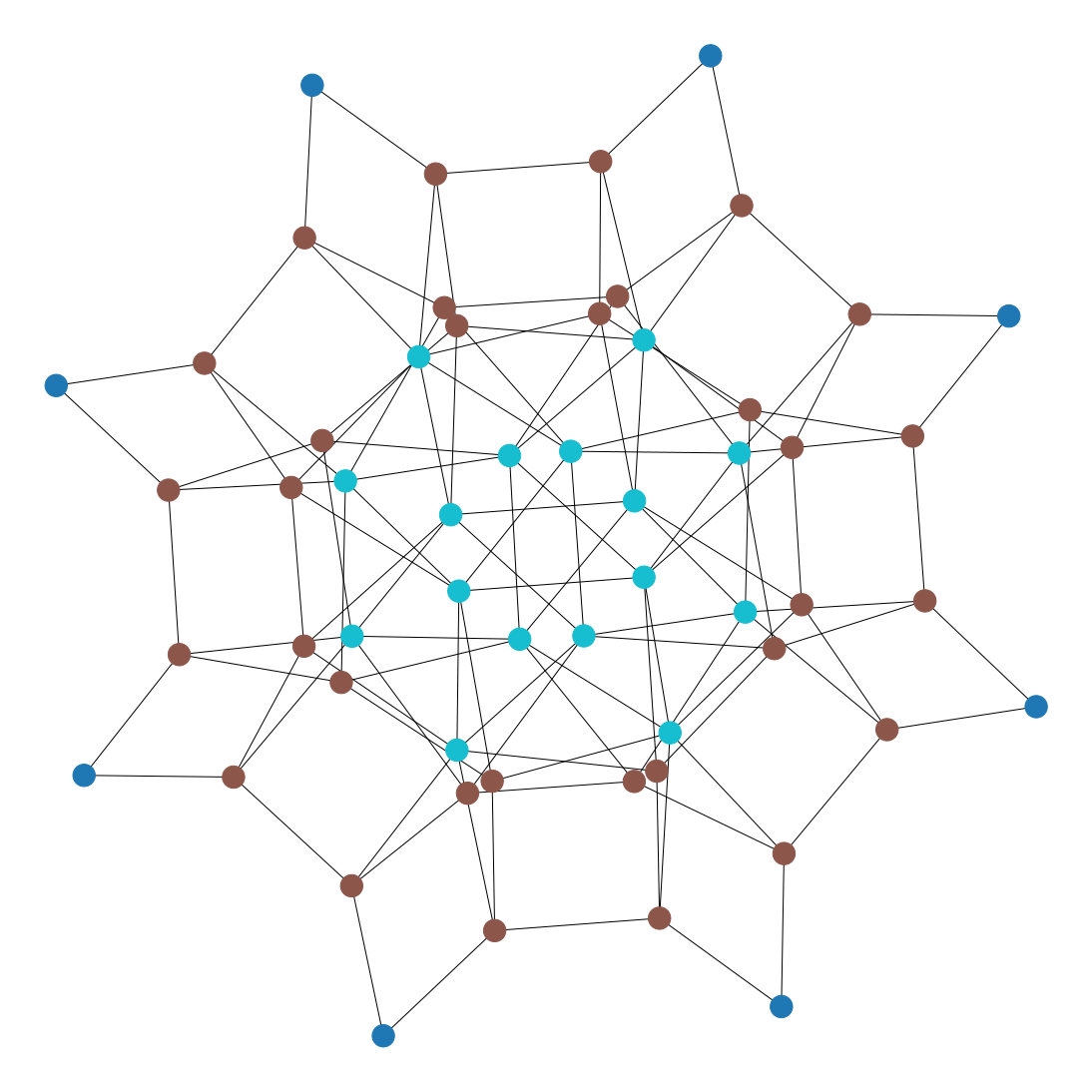}
	\end{subfigure}\hfill
	\begin{subfigure}{0.32\textwidth}
	 	\centering
		\includegraphics[scale=0.1]{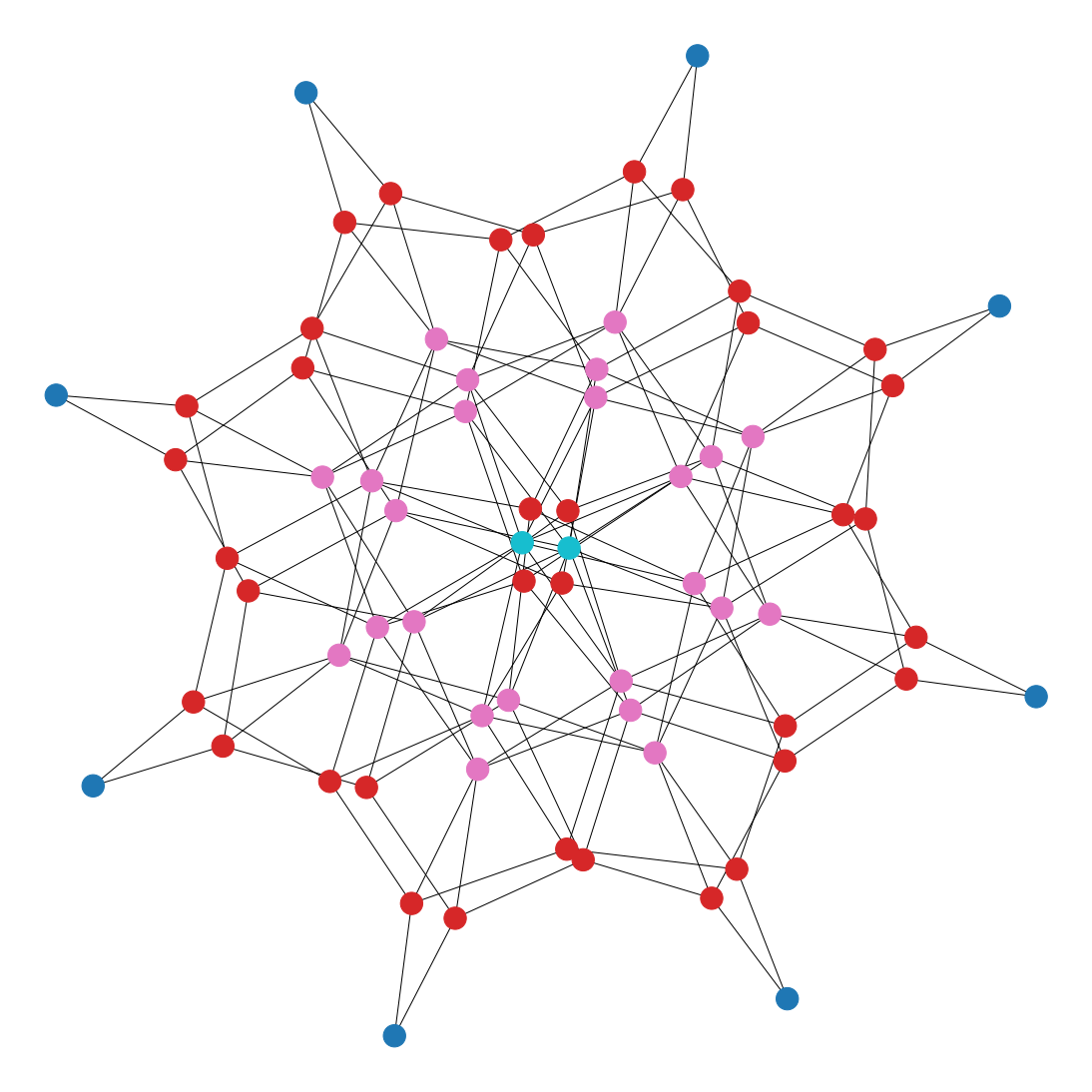}
\end{subfigure}
	\caption{From left to right the underlying cycle graph, the associated graphs $\mathfrak{L}_3$ and $\mathfrak{L}_4$. Indeed, due to the configurations of the vertex induced subgraphs, inclusion $D_3\subset D_4$ is given.\label{fig:comparison_cycle_graph_8}}
\end{figure}
Then, the degree sets for $k=3$ and $k=4$ satisfy
\begin{equation*}
	D_3 = \{2,4,6\} \subset \{2,4,6,8\} = D_4.
\end{equation*} 	  
But, already by considering a $3$-regular graph this property is no longer satisfied. We consider one example in Figure \ref{fig:comparison_regular_3_graph}. Inclusion of the vertex sets would imply that by adding an additional vertex to the sub-graph we would not only increase the number of possible configuration but can also construct from more vertices always configurations in such a way that they have the same density as some configuration on less vertices. 
\begin{figure}[!htb]
	\centering
	\begin{subfigure}{0.32\textwidth}
		\centering
		\includegraphics[scale=0.1]{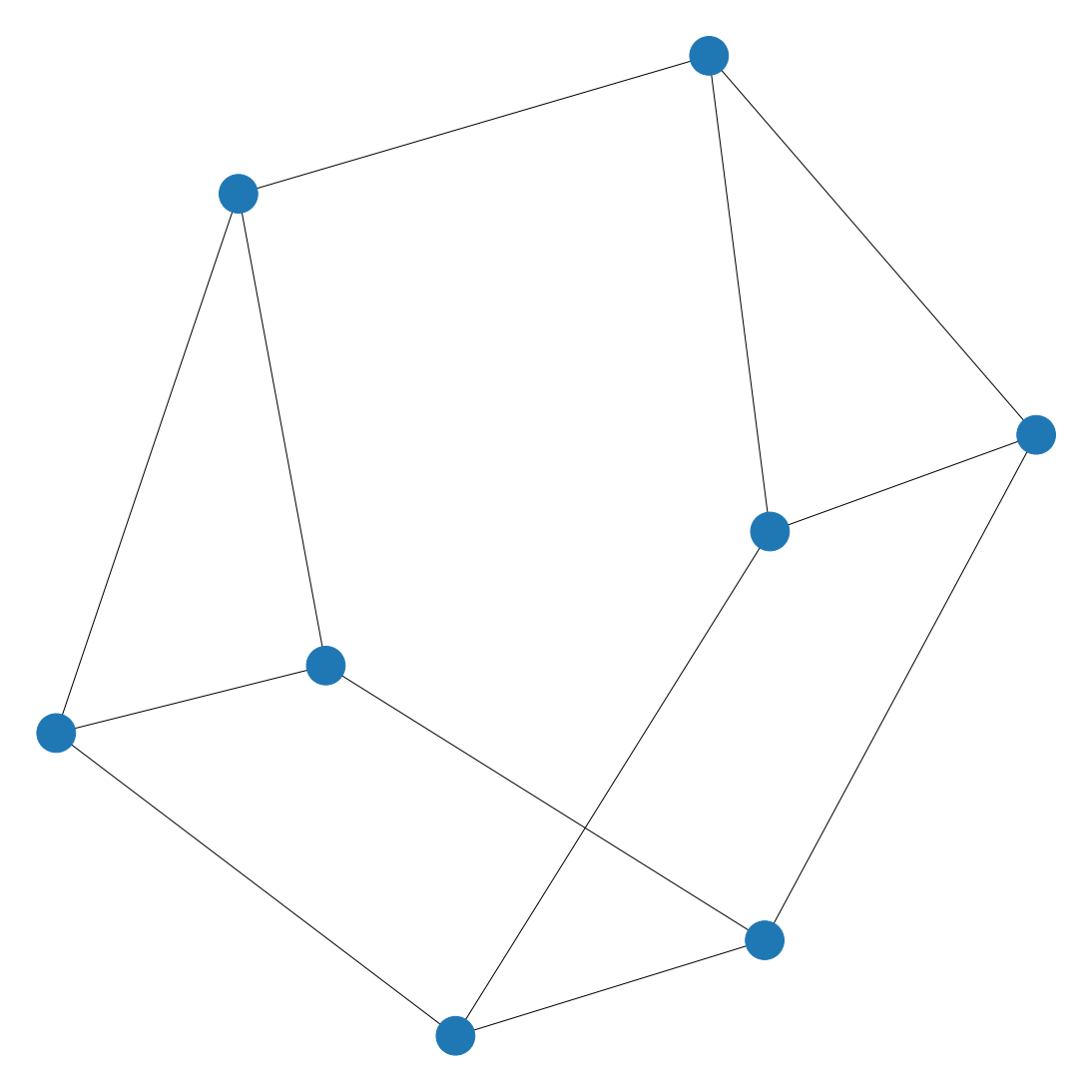}
	\end{subfigure}\hfill
	\begin{subfigure}{0.32\textwidth}
		\centering
		\includegraphics[scale=0.1]{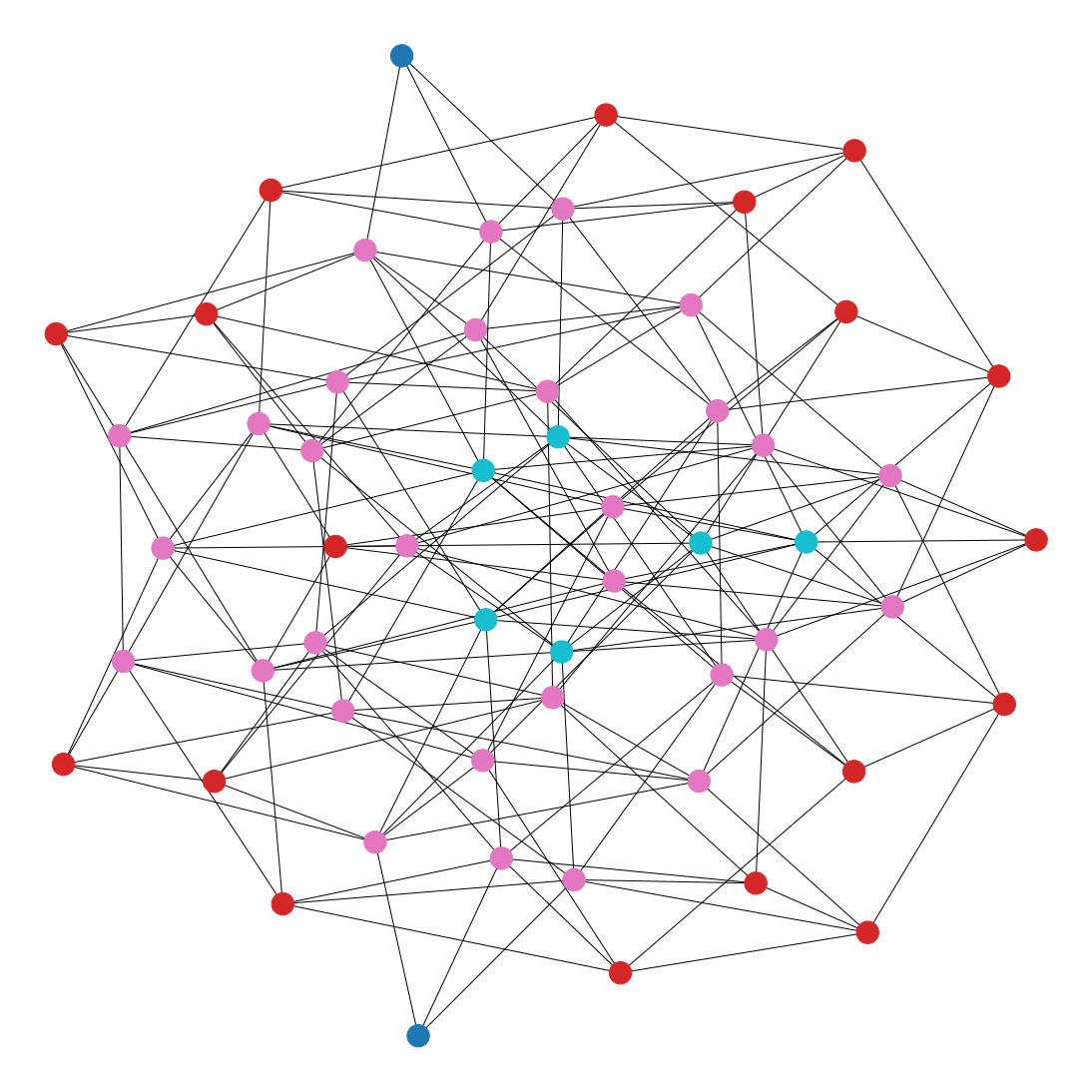}
	\end{subfigure}\hfill
	\begin{subfigure}{0.32\textwidth}
	 	\centering
		\includegraphics[scale=0.1]{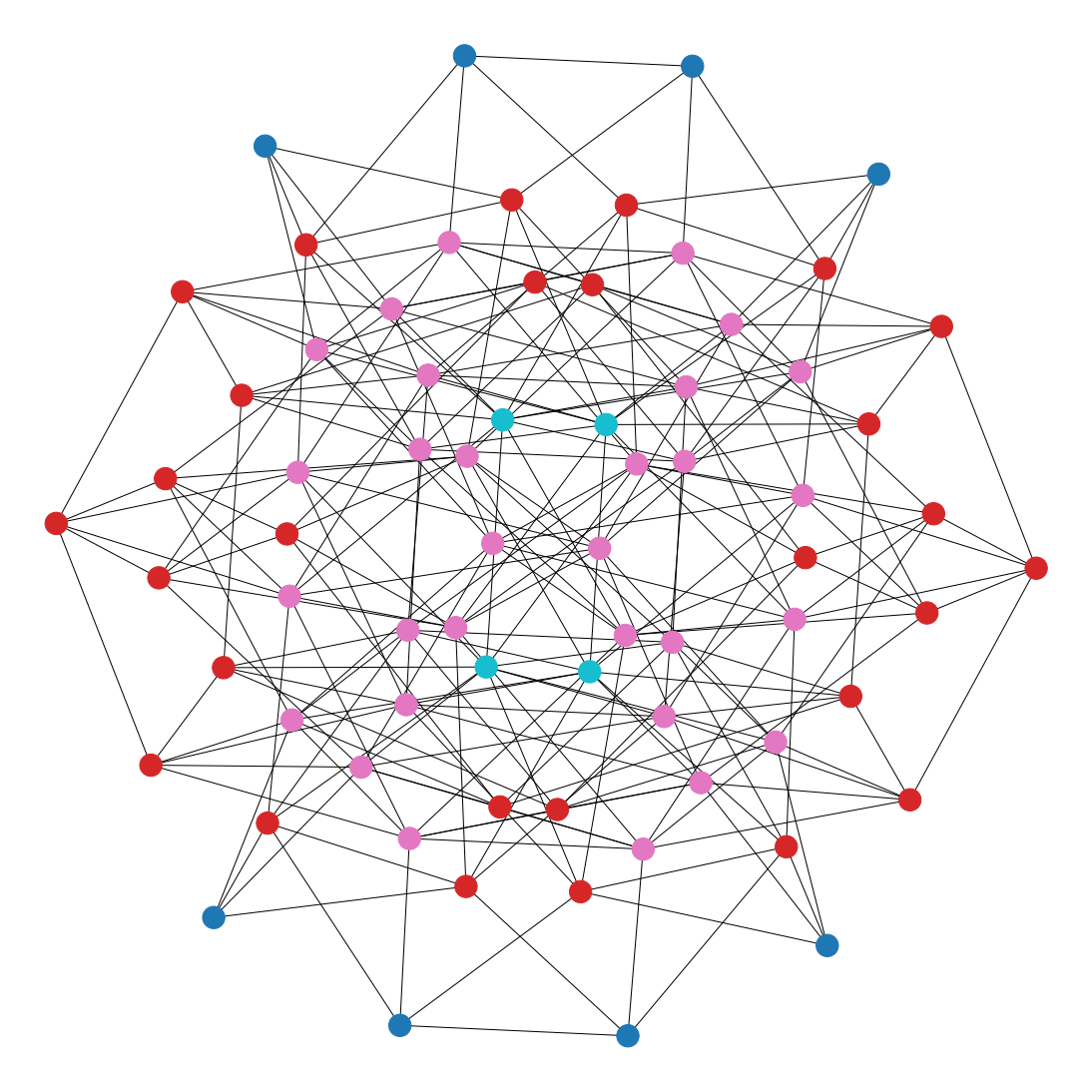}
	\end{subfigure}
	\caption{From left to right the underlying $3$-regular graph on $8$ vertices, the associated graphs $\mathfrak{L}_3$ and $\mathfrak{L}_4$. In this case we encounter $D_3\cap D_4 = \emptyset$.\label{fig:comparison_regular_3_graph}}
\end{figure}
It is intuitively understandable that this cannot be possible if the degree of the underlying graph $L$ is greater than $2$ and sufficiently many vertices are used to span the sub-graph. Indeed, in the case of Figure \ref{fig:comparison_regular_3_graph} we obtain for the degree sets 
\begin{equation}
	D_3 = \{3,5,7,9\} \not\subset \{4,6,8,10\} = D_4
\end{equation}
and even $D_3\cap D_4 = \emptyset$. Nonetheless, we can find implications of degrees in $\mathfrak{L}_k$ for any $k$ exploiting the symmetry of the binomial coefficient and the construction via the symmetric difference which yields, in particular, a symmetry for $k$ and $\bar{n}-k$ densest sub-graphs.
\begin{proposition} \label{prop:dense_subgraphs_k_and_nk}
	Let $k\in\left\{1,\hdots,\bar{n}-1\right\}$ and let $\mathfrak{v} \in \mathfrak{V}_k$ such that $L_{\mathfrak{v}}$ defines a densest vertex induced sub-graph on $k$ vertices in $L$. Then $\mathfrak{v}^c:=V\backslash \mathfrak{v}$ defines a densest sub-graph in $L$ on $\bar{n}-k$ vertices.\\
	Moreover, let $\mathfrak{v} \in \mathfrak{V}_k$ such that $L_{\mathfrak{v}}$ defines a densest vertex induced sub-graph on $k$ vertices in $L$. Then, the bipartite sub-graph $L_{\mathfrak{v},\mathfrak{v}^c}=((\mathfrak{v},\mathfrak{v}^c), E_{\mathfrak{v},\mathfrak{v}^c}) $ satisfies 
	\begin{equation*}
		|E_{\mathfrak{v},\mathfrak{v}^c}| = \min_{\mathfrak{w}\in\mathfrak{V}_k} |E_{\mathfrak{w},\mathfrak{w}^c}|
	\end{equation*}
\end{proposition}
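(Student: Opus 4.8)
The plan is to reduce both assertions to two instances of the handshake identity for the regular graph $L$, together with Proposition \ref{prop:degree_formula_in_tilde_H_k_r} and the relation $\mathrm{deg}_k(\mathfrak{v}) = |E_{\mathfrak{v},\mathfrak{v}^c}|$ recorded in the proof of Lemma \ref{lem:isom_of_bipartites_implies_same_degree}. First I would fix an arbitrary $\mathfrak{v}\in\mathfrak{V}_k$ and count the sum of $L$-degrees over the vertices of $\mathfrak{v}$ in two ways: since $L$ is $\bar{d}$-regular this sum equals $k\bar{d}$, while each internal edge of $L_{\mathfrak{v}}$ is counted twice and each edge crossing to $\mathfrak{v}^c$ exactly once, giving
\begin{equation*}
	k\bar{d} = 2|E_{\mathfrak{v}}| + |E_{\mathfrak{v},\mathfrak{v}^c}|.
\end{equation*}
Applying the same count to $\mathfrak{v}^c\in\mathfrak{V}_{\bar{n}-k}$, whose crossing edges to $(\mathfrak{v}^c)^c=\mathfrak{v}$ are precisely the same $|E_{\mathfrak{v},\mathfrak{v}^c}|$ edges, yields
\begin{equation*}
	(\bar{n}-k)\bar{d} = 2|E_{\mathfrak{v}^c}| + |E_{\mathfrak{v},\mathfrak{v}^c}|.
\end{equation*}

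For the first assertion I would subtract the two displays to eliminate the crossing term, obtaining
\begin{equation*}
	|E_{\mathfrak{v}^c}| - |E_{\mathfrak{v}}| = \frac{(\bar{n}-2k)\bar{d}}{2},
\end{equation*}
a quantity that does \emph{not} depend on the particular choice of $\mathfrak{v}$. The map $\mathfrak{v}\mapsto\mathfrak{v}^c$ is a bijection from $\mathfrak{V}_k$ onto $\mathfrak{V}_{\bar{n}-k}$ (Theorem \ref{thm:quotient_graph_isomorphism}), and along it the internal edge count is shifted by this fixed constant. Hence maximising $|E_{\mathfrak{v}}|$ over $\mathfrak{V}_k$ is equivalent to maximising $|E_{\mathfrak{v}^c}|$ over the same index set, i.e. to maximising $|E_{\mathfrak{u}}|$ over $\mathfrak{u}\in\mathfrak{V}_{\bar{n}-k}$. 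Therefore, if $L_{\mathfrak{v}}$ is a densest induced sub-graph on $k$ vertices then $L_{\mathfrak{v}^c}$ is a densest induced sub-graph on $\bar{n}-k$ vertices, which is the first claim.

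For the second assertion I would solve the first handshake display for the crossing term, $|E_{\mathfrak{w},\mathfrak{w}^c}| = k\bar{d} - 2|E_{\mathfrak{w}}|$ for every $\mathfrak{w}\in\mathfrak{V}_k$; this is strictly decreasing in $|E_{\mathfrak{w}}|$, so the crossing count is minimal exactly when the internal count is maximal. Choosing $\mathfrak{v}$ densest then gives
\begin{equation*}
	|E_{\mathfrak{v},\mathfrak{v}^c}| = k\bar{d} - 2|E_{\mathrm{max};k}| = \min_{\mathfrak{w}\in\mathfrak{V}_k} |E_{\mathfrak{w},\mathfrak{w}^c}|,
\end{equation*}
which is the second claim. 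I expect no serious obstacle here: the only point requiring care is the bookkeeping that the crossing edges $E_{\mathfrak{v},\mathfrak{v}^c}$ are genuinely shared between the two handshake counts, so that the constant difference appearing in the first assertion is exact; once this is in place both statements follow immediately. The same conclusions can alternatively be read off from the degree-preserving isomorphism $\mathfrak{L}_k\stackrel{\sim}{=}\mathfrak{L}_{\bar{n}-k}$ of Theorem \ref{thm:quotient_graph_isomorphism} combined with Proposition \ref{prop:degree_formula_in_tilde_H_k_r}.
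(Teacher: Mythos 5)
Your proof is correct and follows essentially the same route as the paper: the handshake identity $k\bar{d} = 2|E_{\mathfrak{v}}| + |E_{\mathfrak{v},\mathfrak{v}^c}|$ is precisely the degree formula of Proposition \ref{prop:degree_formula_in_tilde_H_k_r} combined with $\mathrm{deg}_k(\mathfrak{v})=|E_{\mathfrak{v},\mathfrak{v}^c}|$, and both arguments reduce to the complementation bijection making $|E_{\mathfrak{v}^c}|-|E_{\mathfrak{v}}|$ a constant (equivalently, $\mathrm{deg}_k(\mathfrak{v})=\mathrm{deg}_{\bar{n}-k}(\mathfrak{v}^c)$). The only cosmetic difference is that for the second claim the paper writes $|E_{\mathfrak{v},\mathfrak{v}^c}|=|E|-(|E_{\mathfrak{v}}|+|E_{\mathfrak{v}^c}|)$ where you use $k\bar{d}-2|E_{\mathfrak{v}}|$ directly, which is if anything slightly cleaner.
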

\begin{proof}
	The proof of the first claim follows by the previously observed structure of the degrees of $\mathfrak{v}$ in $\mathfrak{L}_k$ and the property $D_k = D_{\bar{n}-k}$.
	\begin{align*}
		(\bar{n}-k)\bar{d}-2|E_{\mathfrak{v}^c}| &= \mathrm{deg}_{\bar{n}-k}(\mathfrak{v}^c) = \mathrm{deg}_k(\mathfrak{v}) = \min_{\mathfrak{w}\in\mathfrak{V}_k} \bar{d}k-2|E_{\mathfrak{w}}|\\
		&=\min_{\bar{\mathfrak{w}}\in\mathfrak{V}_{\bar{n}-k}} (\bar{n}-k)\bar{d}-2|E_{\bar{\mathfrak{w}}}| = (\bar{n}-k)\bar{d}-2\max_{\bar{\mathfrak{w}}\in\mathfrak{V}_{\bar{n}-k}}|E_{\bar{\mathfrak{w}}}|.
	\end{align*}
	Turning to the second claim, we observe by symmetry of the densest sub-graphs shown in the first claim, for $\mathfrak{v}$ a densest sub-graph, that 
	$|E_{\mathfrak{v}}|+|E_{\mathfrak{v}^c}|=\max_{\mathfrak{w}\in\mathfrak{V}_k}(|E_{\mathfrak{w}}|+|E_{\mathfrak{w}^c}|)$ and, consequently,
	\begin{equation*}
		|E_{\mathfrak{v},\mathfrak{v}^c}| = |E|-(|E_{\mathfrak{v}}|+|E_{\mathfrak{v}^c}|) = \min_{\mathfrak{w}\in\mathfrak{V}_k}(|E|-(|E_{\mathfrak{w}}|+|E_{\mathfrak{w}^c}|)) = \min_{\mathfrak{w}\in\mathfrak{V}_k}|E_{\mathfrak{w},\mathfrak{w}^c}|.
	\end{equation*}		 
\end{proof}
More symmetries of $\mathfrak{L}_k$ and links to $k$ induced sub-graphs of some underlying can be deduced from the previous properties. In particular, a link between the number of sub-graphs on $k$ and $\bar{n}-k$ vertices containing a fixed number of edges can be established.
\begin{proposition}\label{prop:formula_degree_size_tilde_H_k_r}
	Let $l\in D_k$, $\mathfrak{V}_{D_k;l}:=\{\mathfrak{v}\in \mathfrak{V}_k|\mathrm{deg}_k(\mathfrak{v})=l\}$ and $\mathfrak{L}_{k;l'}$ for $l'\in\mathbb{N}$ the set of vertex induced sub-graphs of $L$ on $k$ vertices containing exactly $l'$ edges. Then 
	\begin{equation}
		\left|\mathfrak{V}_{D_k;l}\right|=\left|\mathfrak{L}_{k;\frac{k\bar{d}-l}{2}}\right|.
	\end{equation}
	Furthermore,
	\begin{equation}
		\left|\mathfrak{L}_{k;\frac{k\bar{d}-l}{2}}\right| = \left|\mathfrak{L}_{\bar{n}-k;\frac{(\bar{n}-k)\bar{d}-l}{2}}\right|.
	\end{equation}
\end{proposition}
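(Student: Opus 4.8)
The plan is to reduce both equalities to the degree formula $\mathrm{deg}_k(\mathfrak{v}) = k\bar{d} - 2|E_{\mathfrak{v}}|$ of Proposition \ref{prop:degree_formula_in_tilde_H_k_r}, combined with the complement isomorphism $\mathfrak{L}_k \stackrel{\sim}{=} \mathfrak{L}_{\bar{n}-k}$ from Theorem \ref{thm:quotient_graph_isomorphism}. Neither identity should require any new construction; they are bookkeeping consequences of results already in hand.

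For the first identity, I would start from the observation that the assignment $\mathfrak{v} \mapsto L_{\mathfrak{v}}$ is a bijection from $\mathfrak{V}_k$ onto the set of all $k$-vertex induced sub-graphs of $L$, since such a sub-graph is determined by its vertex set. By Proposition \ref{prop:degree_formula_in_tilde_H_k_r} we have $\mathrm{deg}_k(\mathfrak{v}) = l$ if and only if $|E_{\mathfrak{v}}| = \tfrac{k\bar{d}-l}{2}$, and this quantity is a nonnegative integer precisely because $l$ and $k\bar{d}$ share the same parity by Lemma \ref{lem:isom_of_bipartites_implies_same_degree}. Restricting the bijection above to the fibre $\mathrm{deg}_k = l$ then identifies $\mathfrak{V}_{D_k;l}$ with $\mathfrak{L}_{k;\frac{k\bar{d}-l}{2}}$, which yields $|\mathfrak{V}_{D_k;l}| = |\mathfrak{L}_{k;\frac{k\bar{d}-l}{2}}|$.

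For the second identity, I would apply the first identity once at level $k$ and once at level $\bar{n}-k$, obtaining
\begin{equation*}
	\left|\mathfrak{L}_{k;\frac{k\bar{d}-l}{2}}\right| = \left|\mathfrak{V}_{D_k;l}\right| \qquad\text{and}\qquad \left|\mathfrak{L}_{\bar{n}-k;\frac{(\bar{n}-k)\bar{d}-l}{2}}\right| = \left|\mathfrak{V}_{D_{\bar{n}-k};l}\right|,
\end{equation*}
so that it suffices to show $|\mathfrak{V}_{D_k;l}| = |\mathfrak{V}_{D_{\bar{n}-k};l}|$. This follows from Theorem \ref{thm:quotient_graph_isomorphism}: the complement map $\Phi_k^{\bar{n}-k}(\mathfrak{v}) = \mathfrak{v}^c$ is a graph isomorphism $\mathfrak{L}_k \stackrel{\sim}{=} \mathfrak{L}_{\bar{n}-k}$ and hence degree-preserving, so it carries the degree-$l$ fibre of $\mathfrak{L}_k$ bijectively onto the degree-$l$ fibre of $\mathfrak{L}_{\bar{n}-k}$ (both nonempty, since $l \in D_k = D_{\bar{n}-k}$). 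Chaining the three equalities gives the claim. A more transparent alternative is to check directly that the complement bijection tracks edge counts: combining the partition $|E_{\mathfrak{v}}| + |E_{\mathfrak{v}^c}| + |E_{\mathfrak{v},\mathfrak{v}^c}| = |E| = \tfrac{\bar{n}\bar{d}}{2}$ with $|E_{\mathfrak{v},\mathfrak{v}^c}| = \mathrm{deg}_k(\mathfrak{v}) = l$ from Lemma \ref{lem:isom_of_bipartites_implies_same_degree} shows that $|E_{\mathfrak{v}}| = \tfrac{k\bar{d}-l}{2}$ forces $|E_{\mathfrak{v}^c}| = \tfrac{(\bar{n}-k)\bar{d}-l}{2}$, so $\mathfrak{v} \mapsto \mathfrak{v}^c$ restricts to the desired bijection between the two families.

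Since every step is a direct application of an already-established identity, I expect no genuine obstacle. The only points requiring care are the integrality of $\tfrac{k\bar{d}-l}{2}$, which is the parity statement of Lemma \ref{lem:isom_of_bipartites_implies_same_degree}, and the edge-partition bookkeeping in the second part, where the crucial input is the equality $|E_{\mathfrak{v},\mathfrak{v}^c}| = \mathrm{deg}_k(\mathfrak{v})$ needed to control the edge count of the complementary sub-graph.
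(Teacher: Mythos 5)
Your proposal is correct and follows essentially the same route as the paper: the first identity is read off from the degree formula $\mathrm{deg}_k(\mathfrak{v}) = k\bar{d} - 2|E_{\mathfrak{v}}|$ of Proposition \ref{prop:degree_formula_in_tilde_H_k_r} together with the fact that $\mathfrak{V}_k$ enumerates all $k$-vertex induced sub-graphs, and the second follows from the degree-preserving isomorphism $\mathfrak{L}_k \stackrel{\sim}{=} \mathfrak{L}_{\bar{n}-k}$ of Theorem \ref{thm:quotient_graph_isomorphism}. Your additional parity check for the integrality of $\tfrac{k\bar{d}-l}{2}$ and the explicit edge-partition verification that $\mathfrak{v}\mapsto\mathfrak{v}^c$ matches the edge counts are welcome details the paper leaves implicit, but they do not change the argument.
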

\begin{proof}
	Consider the identity given in Proposition \ref{prop:degree_formula_in_tilde_H_k_r} for $\mathfrak{v}\in\mathfrak{V}_k$ and let $l\in D_k$. Then 
	\begin{equation*}
		|\mathfrak{V}_{D_k;l}| = |\{\mathfrak{v}\in\mathfrak{V}_k|\mathrm{deg}_k(\mathfrak{v})= l\}| = \left|\left\{\mathfrak{v}\in\mathfrak{V}_k\bigg|\;|E_{\mathfrak{v}}| = \dfrac{k\bar{d}-l}{2} \right\}\right|.
	\end{equation*}
	Moreover, the set $\mathfrak{V}_k$ contains all subsets $\mathfrak{v}\subset V$ of size $k$. Hence, 
	\begin{equation*}
		\left|\left\{\mathfrak{v}\in\mathfrak{V}_k\bigg|\;|E_{\mathfrak{v}}| = \dfrac{k\bar{d}-l}{2} \right\}\right|= |\mathfrak{L}_{k;\frac{k\bar{d}-l}{2}}|
	\end{equation*} 
	and the first claim follows. The second claim follows by $\mathfrak{L}_k\stackrel{\sim}{=}\mathfrak{L}_{\bar{n}-k}$ and the identity given in Proposition \ref{prop:formula_degree_size_tilde_H_k_r}.
\end{proof}
While the preceding results give insights in the links between $\mathfrak{L}_k$ and the extremal properties of $k$-sub-graphs of some underlying $\bar{d}$-regular graph, we are also interested in the local combinatorial properties of vertices in a $k$-sub-graph. This will help us quantify more properties of $\mathfrak{L}_k$ as well as establish the bases for results on Markov chains on $\mathfrak{L}_k$ induced by certain types of exclusion processes. The following result gives a bound on the average degree and the density of vertices in some sub-graph $L_{\mathfrak{v}}$ induced by a vertex $\mathfrak{v}\in\mathfrak{V}_k$.
one can also wonder about $\big||E_{\mathfrak{v}}|-|E_{\mathfrak{v}}^c|\big|$ as a function of $\bar{n},\bar{d}$ and $k$. It turns out that this is in fact a constant for $\bar{d}$-regular graphs.
\begin{lemma}\label{lem:avg_deg_difference_edge_number_v_v_c}
	Let $L$ be a $\bar{d}$-regular graph on $\bar{n}$ vertices and $k\in\{1,\hdots,\bar{n}-1\}$. Then, for any $\mathfrak{v}\in\mathfrak{V}_k$, we have
	\begin{equation}
		\mathrm{avg\;deg}_{\bar{n}-k} (L_{\mathfrak{v}^c}) = \dfrac{\bar{d}(\bar{n}-2k)+k\,\mathrm{avg\;deg}_{k} (L_{\mathfrak{v}})}{\bar{n}-k}
	\end{equation}
	and
	\begin{equation}
		\big||E_{\mathfrak{v}}|-|E_{\mathfrak{v}^c}|\big| = \left|\dfrac{\bar{d}(\bar{n}-2k)}{2}\right|.
	\end{equation}		
\end{lemma}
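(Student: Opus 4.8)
The plan is to reduce both statements to the elementary handshaking identity encoded in the degree formula of Proposition \ref{prop:degree_formula_in_tilde_H_k_r}, applied once to $\mathfrak{v}$ and once to its complement $\mathfrak{v}^c$, together with the observation that the number of edges of $L$ crossing the partition $V=\mathfrak{v}\sqcup\mathfrak{v}^c$ is the same whether one reads it from the $\mathfrak{v}$-side or from the $\mathfrak{v}^c$-side.

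First I would record the two instances of the degree formula. By Proposition \ref{prop:degree_formula_in_tilde_H_k_r} we have $\mathrm{deg}_k(\mathfrak{v}) = k\bar{d} - 2|E_{\mathfrak{v}}|$, and applying the same result to $\mathfrak{v}^c \in \mathfrak{V}_{\bar{n}-k}$ (legitimate since $L$ is $\bar{d}$-regular) yields $\mathrm{deg}_{\bar{n}-k}(\mathfrak{v}^c) = (\bar{n}-k)\bar{d} - 2|E_{\mathfrak{v}^c}|$. By Lemma \ref{lem:isom_of_bipartites_implies_same_degree}, both quantities equal the cardinality $|E_{\mathfrak{v},\mathfrak{v}^c}|$ of the common bipartite boundary edge set, hence they coincide:
\begin{equation*}
	k\bar{d} - 2|E_{\mathfrak{v}}| = (\bar{n}-k)\bar{d} - 2|E_{\mathfrak{v}^c}|.
\end{equation*}

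Next I would solve this identity for $2|E_{\mathfrak{v}^c}|$, obtaining $2|E_{\mathfrak{v}^c}| = \bar{d}(\bar{n}-2k) + 2|E_{\mathfrak{v}}|$. For the first claim I would substitute the elementary relations $\mathrm{avg\;deg}_{\bar{n}-k}(L_{\mathfrak{v}^c}) = 2|E_{\mathfrak{v}^c}|/(\bar{n}-k)$ and $2|E_{\mathfrak{v}}| = k\,\mathrm{avg\;deg}_k(L_{\mathfrak{v}})$, which are just the handshaking lemma inside each induced subgraph; dividing through by $\bar{n}-k$ then reproduces the stated expression verbatim. For the second claim the same rearrangement reads $|E_{\mathfrak{v}^c}| - |E_{\mathfrak{v}}| = \bar{d}(\bar{n}-2k)/2$, and taking absolute values yields $\big||E_{\mathfrak{v}}|-|E_{\mathfrak{v}^c}|\big| = |\bar{d}(\bar{n}-2k)/2|$.

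There is no serious obstacle here: the entire content is a single boundary-edge count. The only point requiring care is bookkeeping, namely applying the factor of two in the average-degree convention $2|E|/(\text{number of vertices})$ consistently to both induced subgraphs, and noting that the crossing-edge count $|E_{\mathfrak{v},\mathfrak{v}^c}|$ is manifestly symmetric in $\mathfrak{v}$ and $\mathfrak{v}^c$, which is precisely what licenses equating the two degree formulas above.
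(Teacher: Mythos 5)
Your proof is correct and follows essentially the same route as the paper: both rest on the central equality $\mathrm{deg}_k(\mathfrak{v})=\mathrm{deg}_{\bar{n}-k}(\mathfrak{v}^c)$ combined with the degree formula $\mathrm{deg}_k(\mathfrak{v})=k\bar{d}-2|E_{\mathfrak{v}}|$, followed by the same rearrangement. The only cosmetic difference is that you justify the central equality via the symmetry of the boundary-edge count $|E_{\mathfrak{v},\mathfrak{v}^c}|$, whereas the paper invokes the isomorphism $\mathfrak{L}_k\stackrel{\sim}{=}\mathfrak{L}_{\bar{n}-k}$; both justifications are valid and indeed both appear elsewhere in the paper.
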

\begin{proof}
	Using the isomorphism between $\mathfrak{L}_k$ and $\mathfrak{L}_{\bar{n}-k}$ we obtain that for $\mathfrak{v}\in\mathfrak{V}_k$ due to the equality of the degrees of $\mathfrak{v}$ and $\mathfrak{v}^c$
	\begin{equation*}
	k(\bar{d}-\mathrm{avg\;deg}_{k} (L_{\mathfrak{v}}))=\mathrm{deg}_k(\mathfrak{v})=\mathrm{deg}_{\bar{n}-k}(\mathfrak{v^c})=(\bar{n}-k)(\bar{d}-\mathrm{avg\;deg}_{\bar{n}-k} (L_{\mathfrak{v}^c}))
	\end{equation*}
	and, equivalently, 
	\begin{equation*}
		\mathrm{avg\;deg}_{\bar{n}-k} (L_{\mathfrak{v}^c}) = \dfrac{\bar{d}(\bar{n}-2k)+k\,\mathrm{avg\;deg}_{k} (L_{\mathfrak{v}})}{\bar{n}-k}.
	\end{equation*}
	For the second claim, we use that $k\,\mathrm{avg\;deg}_{k} (L_{\mathfrak{v}})=2|E_{\mathfrak{v}}|$ such that by the first claim
	\begin{equation*}
		2(|E_{\mathfrak{v}^c}|-|E_{\mathfrak{v}}|) = (\bar{n}-k)\,\mathrm{avg\;deg}_{\bar{n}-k} (L_{\mathfrak{v}^c})-k\,\mathrm{avg\;deg}_{k} (L_{\mathfrak{v}}) = \bar{d}(\bar{n}-2k)
	\end{equation*}	  
	which yields the second claim.
\end{proof}
Indeed, some well-known results may be derived directly from the degree formula obtained in Proposition \ref{prop:degree_formula_in_tilde_H_k_r}. In particular, relationships between sub-graphs of $L$ and their complements with respect to $L$ become easily accessible.
\begin{lemma}\label{lem:least_dense_is_densest_in_complement}
	Let $L=(V,E)$ be a simple connected graph on $n$ vertices. Assume that $\mathfrak{v}\subset V$ is a least dense $k$-sub-graph of $L$. Then $\mathfrak{v}$ is a densest $k$-sub-graph of $L^c$. 
\end{lemma}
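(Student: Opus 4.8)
The plan is to reduce the statement to the elementary identity, already recorded in the proof of Lemma \ref{lem:L_and_complement_L_link_Johnson_graph}, that for every $k$-subset $\mathfrak{v}\subset V$ the induced edge sets in $L$ and in $L^c$ are complementary inside the complete graph on $\mathfrak{v}$. Concretely $|E_{\mathfrak{v}}| + |E_{\mathfrak{v}}^c| = \binom{k}{2}$, since each of the $\binom{k}{2}$ possible pairs of vertices in $\mathfrak{v}$ is an edge of exactly one of $L_{\mathfrak{v}}$ and $L_{\mathfrak{v}}^c$. The crucial feature is that the right-hand side depends only on $k$, and is therefore the same constant for every competitor $\mathfrak{w}\in\mathfrak{V}_k$.

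First I would fix the meaning of \emph{density}: since all candidate sets have exactly $k$ vertices, any reasonable normalization of density (edges per vertex, or edges divided by $\binom{k}{2}$) is a strictly increasing function of the edge count $|E_{\mathfrak{v}}|$. Thus $\mathfrak{v}$ being a least dense $k$-subgraph of $L$ is equivalent to $|E_{\mathfrak{v}}| = \min_{\mathfrak{w}\in\mathfrak{V}_k} |E_{\mathfrak{w}}|$, and being a densest $k$-subgraph of $L^c$ is equivalent to $|E_{\mathfrak{v}}^c| = \max_{\mathfrak{w}\in\mathfrak{V}_k}|E_{\mathfrak{w}}^c|$.

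Then I would combine the two observations. Applying the identity to an arbitrary competitor $\mathfrak{w}$ gives $|E_{\mathfrak{w}}^c| = \binom{k}{2} - |E_{\mathfrak{w}}|$, so that
\begin{equation*}
	\max_{\mathfrak{w}\in\mathfrak{V}_k}|E_{\mathfrak{w}}^c| = \binom{k}{2} - \min_{\mathfrak{w}\in\mathfrak{V}_k}|E_{\mathfrak{w}}|.
\end{equation*}
Since $\mathfrak{v}$ attains the minimum on the right, it also attains the maximum of $|E_{\mathfrak{w}}^c|$ on the left, which is exactly the assertion that $\mathfrak{v}$ is a densest $k$-subgraph of $L^c$.

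I do not expect a genuine obstacle here: the entire content is that subtracting from a fixed constant turns minimizers into maximizers. The only point demanding any care is confirming that density is monotone in the number of edges once the vertex count is frozen at $k$, which is immediate. If one prefers, the same conclusion can be phrased through the degree formula $\mathrm{deg}_k(\mathfrak{v}) = k\bar{d} - 2|E_{\mathfrak{v}}|$ of Proposition \ref{prop:degree_formula_in_tilde_H_k_r} in the regular case, but the present lemma requires only the complementary-edge identity and no regularity assumption on $L$.
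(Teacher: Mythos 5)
Your proposal is correct and follows essentially the same route as the paper's own proof: both rest on the identity $|E_{\mathfrak{v}}|+|E_{\mathfrak{v}}^c|=\binom{k}{2}$ (recorded in Lemma \ref{lem:L_and_complement_L_link_Johnson_graph}) and the observation that subtracting from a constant exchanges minimizers and maximizers. Your version is slightly more explicit about why density is monotone in the edge count for fixed $k$, but the argument is the same.
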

\begin{proof}
	Let $\mathfrak{v}\subset V$, $|\mathfrak{v}|=k$ and assume that $(\mathfrak{v},E_{\mathfrak{v}})$ is a least dense sub-graph in $L$. Then, 
	\begin{equation}
		|E_{\mathfrak{v}}| = \min_{\mathfrak{w}\subset V,|\mathfrak{w}|=k}|E_{\mathfrak{w}}| = \dfrac{k(k-1)}{2}-\max_{\mathfrak{w}\subset V,|\mathfrak{w}|=k}|E_{\mathfrak{w}}^c|.
	\end{equation}
	The property $|E_{\mathfrak{v}}|+|E_{\mathfrak{v}}^c|=\frac{k(k-1)}{2}$ yields the claim.
\end{proof}
While Lemma \ref{lem:least_dense_is_densest_in_complement} gives a result on densest sub-graphs and their interpretation in the complement graph one has to wonder about the structure of said complement. For example, assuming that $L$ and $L^c$ are connected brings necessary additional properties of $L$ and $L^c$ with it which, in turn, imply more structure also on $\mathfrak{L}_k$. We turn now to geometric properties of $\mathfrak{L}_k$.
\newpage
\section{Outlook}
	Finding the vertex connectivity of a graph is a well known problem, for which fast algorithms have been developed over the years. Theorem \ref{thm:L_k_vertex_connectivity} links this topic to the topic of finding the density of the densest $k$ sub-graph of $L$. In \cite{Henzinger:99361} the authors present an algorithm for finding the connectivity of a graph which is asymptotical as $\mathcal{O}(nm)$ to the number of vertices $n$ and the number of edges $m$. This implies that there is an algorithm for finding the densest $k$ vertex induced subgraph which is asymptotical as $\mathcal{O}\left(\binom{\bar{n}}{k}^2\cdot\dfrac{\bar{d}k(\bar{n}-k)}{2(\bar{n}-1)}\right)$. Further research into this link might yield additional insights on upper bounds of the complexity of finding dense sub-graphs of regular graphs.\par
Evidently, the analysis we performed in this section is not complete in any regard. Many graph theoretical questions remain open for later research. This ranges from the diameter of $\mathfrak{L}_k$, which would, indeed, push the results in the later sections in terms of their quantitative dimension, to the characterization of the Eigenvalues of the graph Laplacian associated to $\mathfrak{L}_k$ as well as its link to $L$.

\begin{figure}[!htb]
	\centering
	\begin{tikzpicture}[scale = 0.5]
		\Vertex[x=-3,y=3,label={$L$},size = 1.5]{t1}
		\Vertex[x=3,y=3,label={$C$},size = 1.5]{t2}
		\Vertex[x=-3,y=-3,label={$\mathfrak{L}_k$},size = 1.5]{t3}
		\Vertex[x=3,y=-3,label={$\mathcal{J}(\bar{n},k)$},size = 1.5]{t4}
		\Edge[Direct, label={$\hookrightarrow$}](t1)(t2)
		\Edge[Direct, label={$\Psi_k$}](t1)(t3)
		\Edge[Direct, label={$\Psi_k$}](t2)(t4)
		\Edge[Direct, label={$\hookrightarrow$}](t3)(t4);
	\end{tikzpicture}
	\caption{Link between types of graphs presented in this work. We denote by $\Psi_k$ the construction of the kPG associated to $L$. Research questions on one may be considered in the setting of another along the depicted arrows.\label{fig:diagram_L_Lk_Jnk}}
\end{figure}
Even conjectures remain at this point out of reach. Considering the diameter, from the isometry $\mathfrak{L}_k\stackrel{\sim}{=}\mathfrak{L}_{\bar{n}-k}$ we can, nonetheless, look for a formula which is symmetric in $k$ and $\bar{n}-k$. On the other hand, it will be constrained by the diameter of the underlying graph $L$. Employing the particle view, any particle in a configuration $\mathfrak{v}\in\mathfrak{V}_k$ has to move at most the diameter of $L$ to its new location in another configuration, if looking for the diameter of $\mathfrak{L}_k$. Therefore, a natural upper bound for $\mathrm{diam}(\mathfrak{L}_k)$ is $k\mathrm{diam}(L)$. It turns out that this bound is a lot bigger than the actual diameter of $\mathfrak{L}_k$ in cases testable via simulations. \\
Further topics include in particular the link between $L$ and $\mathfrak{L}_k$ as well as $\mathfrak{L}_k$ and $\mathcal{J}(\bar{n},k)$. Indeed, this gives the diagram represented in Figure \ref{fig:diagram_L_Lk_Jnk} which we already discussed briefly in the discussion. Sub-graph relationships are preserved via the horizontal arrows, while the spanning of a kPG based on some underlying graph $L$ is given in the vertical direction via the map $\Psi_k$. The potential of jumping between these images has been shown throughout this section and will, so does the author hope, nourish the research on graph theoretic problems related to sub-graphs. 
\newpage
\printbibliography

\newpage
\appendix
\section{Special cases for $\bar{d}$-regular graph}
To begin with, we consider the cases $k=2$ and $k=\bar{n}-2$. In these cases due to the isomorphism $\mathfrak{L}_k\stackrel{\sim}{=}\mathfrak{L}_{\bar{n}-k}$ we only have to make claims about one or the other. It turns out that the degree set $D_k$ is given by $D_k=\{2\bar{d},2(\bar{d}-1)\}$ since for $k=2$ the two particles can be neighbors and block, therefore, an adjacent vertex mutually, or not. We define the level sets with respect to the degree as $\mathfrak{V}_{D_k;2l}:=\{\mathfrak{v}\in\mathfrak{V}_k|\,\mathrm{deg}_k(\mathfrak{v})=2l\}$ for $l\in\{\bar{d},\bar{d}-1\}$. Remark that $|\mathfrak{V}_{D_k;2(\bar{d}-1)}|=|E|=\frac{\bar{n}\bar{d}}{2}$. An important question to ask is how large subsets of $\mathfrak{V}_k$ can be chosen when taking into account weights given by the degree sequence on said subset. 
We consider to this end for $\mathfrak{U}\subset\mathfrak{V}_k$ the map
\begin{equation*}
	\mathfrak{f}(\mathfrak{U}):=\dfrac{\bar{d}|\mathfrak{V}_{D_k;2(\bar{d}-1)}\cap\mathfrak{U}|+(\bar{d}+1)|\mathfrak{V}_{D_k;2\bar{d}}\cap\mathfrak{U}|}{\frac{\bar{n}\bar{d}^2}{2}+ \frac{\bar{n}(\bar{n}-1-\bar{d})(\bar{d}+1)}{2}}
\end{equation*}
which represents the weighted sum of the vertices in $\mathfrak{U}$ where each weight corresponds to the quotient of the number of smaller and larger degrees then $\mathrm{deg}_k(\mathfrak{v})$. As there are only two sets $\mathfrak{V}_{D_k;2(\bar{d}-1)}$ and $\mathfrak{V}_{D_k;2\bar{d}}$ which constitutes $\mathfrak{V}_k$ and vertices in $\mathfrak{V}_{D_k;2(\bar{d}-1)}$ have a smaller weight than vertices in $\mathfrak{V}_{D_k;2\bar{d}}$, the size of $|\mathfrak{U}|$ is maximal with subject to the constraint $\mathfrak{f}(\mathfrak{U})\leq 2^{-1}$ if $\mathfrak{U}$ contains as many states with small degree as possible. Indeed, the size of can be characterized by
\begin{equation}
	\max_{\mathfrak{U}\subseteq \mathfrak{V}_k, \mathfrak{f}(\mathfrak{U})\leq 2^{-1}}|\mathfrak{U}| = \begin{cases} \dfrac{\bar{d}\bar{n}}{2} + y_{\ast},& 0\leq \bar{d}\leq \frac{1}{2}\left(\bar{n}-1+\sqrt{\bar{n}-1}\sqrt{\bar{n}-1+4}\right)\\
	y^{\ast} ,& otherwise
	\end{cases}
\end{equation}
where we define
\begin{align*}
	y_{\ast} &= \left\lfloor\dfrac{\bar{n}(\bar{n}-1-\bar{d})}{4}-\dfrac{\bar{d}^2\bar{n}}{4(\bar{d}+1)}\right\rfloor;\quad y^{\ast} = \left\lfloor\dfrac{\bar{d}+1}{\bar{d}}\dfrac{\bar{n}(\bar{n}-1-\bar{d})}{4}+\dfrac{\bar{d}\bar{n}}{4}\right\rfloor.
\end{align*}
We outline a proof in what follows. First assume that 
\begin{equation*}
	\bar{d}\leq \frac{1}{2}\left(\bar{n}-1+\sqrt{\bar{n}-1}\sqrt{\bar{n}-1+4}\right).
\end{equation*}
Recall that $D_k=\{d\in\mathbb{N}| \exists\mathfrak{v}\in\mathfrak{V}_k:\mathrm{deg}_k(\mathfrak{v})=d\}$ and for $l\in D_k$ that $\mathfrak{V}_{D_k;l}:=\{\mathfrak{v}\in \mathfrak{V}_k|\mathrm{deg}_k(\mathfrak{v})=l\}$. In the case $k=2$, there are exactly two distinct degrees in $\mathfrak{V}_k$ and, hence, only two sets $\mathfrak{V}_{D_k;l}$, namely, $\mathfrak{V}_{D_k;2(\bar{d}-1)}$ and $\mathfrak{V}_{D_k;2\bar{d}}$. Indeed, in the discussed case, we can derive $|\mathfrak{U}^{\ast}|:=\max_{\mathfrak{U}\subseteq \mathfrak{V}_k, \mathfrak{f}(\mathfrak{U})\leq 2^{-1}}|\mathfrak{U}|$ explicitly, which becomes important since $|\mathfrak{S}|\leq |\mathfrak{U}^{\ast}|$. Considering the representation of sub-sets of size $2$ of a vertex set as an edge, we obtain that $|\mathfrak{V}_{D_k;2(\bar{d}-1)}|=\frac{\bar{n}\bar{d}}{2}$ and, consequently, $|\mathfrak{V}_{D_k;2(\bar{d}-1)}|=\frac{\bar{n}(\bar{n}-1-\bar{d})}{2}$. We can now consider the two cases for $\max_{\mathfrak{U}\subseteq \mathfrak{V}_k, \mathfrak{f}(\mathfrak{U})\leq 2^{-1}}|\mathfrak{U}|$ which correspond to the cases $\frac{\bar{n}\bar{d}^2}{2}\leq \frac{\bar{n}(\bar{n}-1-\bar{d})(\bar{d}+1)}{2}$ and $\frac{\bar{n}\bar{d}^2}{2}\geq \frac{\bar{n}(\bar{n}-1-\bar{d})(\bar{d}+1)}{2}$, respectively.\par
Consequently, in the first case, the maximal set $\mathfrak{U}$ satisfies $|\mathfrak{V}_{D_k;2(\bar{d}-1)}|\leq|\mathfrak{U}|$ and writing $|\mathfrak{U}|=|\mathfrak{V}_{D_k;2(\bar{d}-1)}|+y=\frac{ \bar{d}\bar{n}}{2}+y$ for some non-negative integer $y$ we obtain that for all $y\leq y_{\ast}$, for $y_{\ast}$ as in the claim, the inequality
\begin{align*}
	\mathfrak{f}(\mathfrak{U})&=\dfrac{\frac{\bar{n}\bar{d}^2}{2}+(\bar{d}+1)y}{\frac{\bar{n}\bar{d}^2}{2}+ \frac{\bar{n}(\bar{n}-1-\bar{d})(\bar{d}+1)}{2}}\vspace{5pt} \leq \dfrac{\frac{\bar{n}\bar{d}^2}{2}+(\bar{d}+1)\left(\frac{\bar{n}(\bar{n}-1-\bar{d})}{4}-\frac{\bar{d}^2\bar{n}}{4(\bar{d}+1)}\right)}{\dfrac{\bar{n}\bar{d}^2}{2}+ \frac{\bar{n}(\bar{n}-1-\bar{d})(\bar{d}+1)}{2}}
\end{align*}
is satisfied where the last term is smaller or equal one half. Let $\mathfrak{A}_{y_{\ast}}\subset \mathfrak{V}_{D_k;2\bar{d}}$ be any subset of size $y_{\ast}$. Then, choosing $\mathfrak{U}=\mathfrak{V}_{D_k;2(\bar{d}-1)}\cup \mathfrak{A}_{y_{\ast}}$ we obtain a subset $\mathfrak{U}\subset \mathfrak{V}_k$ of size $\frac{\bar{d}\bar{n}}{2}$ with $\mathfrak{f}(\mathfrak{U})\leq 2^{-1}$ and any larger subset $\mathfrak{W}$ gives $\mathfrak{f}(\mathfrak{W})> 2^{-1}$. We have therefore found the size maximum of any subset $\mathfrak{U}$ satisfying $\mathfrak{f}(\mathfrak{U})\leq 2^{-1}$. In the second case, we can choose $\mathfrak{U}\subset\mathfrak{V}_{D_k;2(\bar{d}-1)}$ and 
\begin{equation*}
	\mathfrak{f}(\mathfrak{U})=\dfrac{\frac{\bar{n}\bar{d}y}{2}}{\frac{\bar{n}\bar{d}^2}{2}+ \frac{\bar{n}(\bar{n}-1-\bar{d})(\bar{d}+1)}{2}}.
\end{equation*} 
The remaining arguments can be made analogously to the first case. Due to the symmetry \par
We discuss now special cases $\bar{d}\in\{2,\bar{n}-2\}$. In these cases explicit quantitative statements can be made about the structure of $\mathfrak{L}_k$ for any $k\in\{1,\hdots,\bar{n}-1\}$. We lead into this section with the case $\bar{d}=2$. 
We consider first the case $\bar{d}=2$ and analyze the structure of the underlying graph $\mathfrak{L}_k$. This means we are working with a generalized exclusion process on a cycle graph as illustrated in Figure \ref{fig:particles_on_cycle_graph}. We assume that there are $k\leq \frac{\bar{n}}{2}$ particles since by symmetry $\mathfrak{L}_k\stackrel{\sim}{=}\mathfrak{L}_{\bar{n}-k}$ we cover the remaining cases as well. 

\begin{figure}[!htb]
	\centering
	\begin{tikzpicture}[scale = 0.8]
		\Vertex[x=0,y=0]{t1}
		\Vertex[x=-2,y=1]{t2}
		\Vertex[x=-2,y=3]{t3}
		\Vertex[x=-1,y=4]{t4}
		\Vertex[x=1,y=4]{t5}
		\Vertex[x=2,y=3]{t6}
		\Vertex[x=2,y=1]{t7}
	    \Vertex[x=0,y=0,color = blue,size = 0.5]{z}
	    \Vertex[x=-2,y=1,color = blue,size = 0.5]{z}
	    \Vertex[x=2,y=3,color = blue,size = 0.5]{z}
		\Edge(t1)(t2)
		\Edge(t3)(t2)
		\Edge(t3)(t4)
		\Edge(t4)(t5)
		\Edge(t5)(t6)
		\Edge(t6)(t7)
		\Edge(t7)(t1)
	\end{tikzpicture}
	\caption{A particle configuration of $3$ particles on a cycle graph of length $7$.\label{fig:particles_on_cycle_graph}}
\end{figure}
To find the degrees in $\mathfrak{L}_k$ we can write any configuration $\mathfrak{v}$ of particles as a vector which contains as entries the size of a connected component of $\mathfrak{L}_{\mathfrak{v}}$. Assume that there are $c'_{\mathfrak{v}}$ connected components of $\mathfrak{L}_{\mathfrak{v}}$ and denote their respective sizes by $k_i$. Each connected component contains exactly $k_i-1$ edges due to the structure of the underlying cycle graph $L$. Therefore, we obtain that 
\begin{equation*}
	\mathrm{deg}_k(\mathfrak{v}) = 2k-2\sum_{i=1}^{c'_{\mathfrak{v}}} (k_i -1) = 2 c'_{\mathfrak{v}}.
\end{equation*}
Furthermore, since $k\leq \frac{\bar{n}}{2}$ we can construct for any $l \in\{1, \hdots,k\}$ a configuration $\mathfrak{v}$ satisfying $c'_{\mathfrak{v}} = l$. This implies that $D_k = \{2l|l\in\{1,\hdots,k\}\}$. The remaining task consist in finding $|\mathfrak{V}_{D_k;d}|$ for any $d\in D_k$ which is equivalent to finding the number of vertex induced $k$ sub-graphs of $L$ containing exactly $\frac{d}{2}$ edges. Figure \ref{fig:particles_on_cycle_graph} shows that this problem is related to finding all unique configurations of two colored balls on a necklace with a fixed number of each color.
To explain this further, we recall that any configuration $\mathfrak{v}$ can be translated to a vector of length $c'_{\mathfrak{v}}$ where each entry corresponds to the length of one connected component of $L_{\mathfrak{v}}$. We can, hence, identify the problem with finding all distinct colored necklaces of length $c'_{\mathfrak{v}}$ with colorings $\{(x_i,y_i)_{i=1}^{c'_{\mathfrak{v}}}|\sum_{i=1}^{c'_{\mathfrak{v}}}(x_i,y_i)=(k,\bar{n}-k)\}$, a problem solved by P\`olya's enumeration theorem. The number of configurations induced by each distinct necklace is then given by the size of its orbit under the symmetric group $S_{c'_{\mathfrak{v}}}$. 
From this we can derive $|\mathfrak{V}_{D_k;2 c'_{\mathfrak{v}}}|$ which gives the first part of the puzzle.
\begin{lemma}\label{lem:size_D2lk_case_bar_d=2}
	Let $L$ be a cycle on $\bar{n}$ vertices and $k\in\left\{1,\hdots,\left\lfloor\frac{\bar{n}}{2}\right\rfloor\right\}$. Then, for $l\in\{0,\hdots,k\}$ the identity
	\begin{equation}
		|\mathfrak{V}_{D_k;2 l}|=\binom{k-1}{l-1}\binom{\bar{n}-k-1}{l-1}\dfrac{\bar{n}}{l}
	\end{equation}
	is satisfied.
\end{lemma}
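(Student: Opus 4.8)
The plan is to translate the statement into a purely combinatorial count of circular binary arrangements and then evaluate that count by a marking (cycle-lemma) argument. By the degree computation $\mathrm{deg}_k(\mathfrak{v})=2c'_{\mathfrak{v}}$ recalled just above the statement, a vertex $\mathfrak{v}$ has degree $2l$ in $\mathfrak{L}_k$ precisely when its induced subgraph $L_{\mathfrak{v}}$ has exactly $l$ connected components; on a cycle these components are exactly the maximal runs (\emph{blocks}) of consecutive occupied vertices. Hence $|\mathfrak{V}_{D_k;2l}|$ equals the number of $k$-subsets of the labelled vertex set $V\cong\mathbb{Z}/\bar{n}\mathbb{Z}$ whose occupied positions form exactly $l$ blocks. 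Since $1\le k\le\lfloor\bar{n}/2\rfloor$, both occupied and empty vertices are present, so around the circle the occupied blocks alternate with the empty blocks and there are also exactly $l$ empty blocks. The case $l=0$ is vacuous for $k\ge1$, so I assume $l\ge1$.

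First I would set up the marking. Call an occupied vertex a \emph{block-start} if its clockwise predecessor is empty; a configuration with $l$ blocks has exactly $l$ block-starts. I will count the set $\mathcal{P}$ of pairs $(\mathfrak{v},p)$ with $\mathfrak{v}\in\mathfrak{V}_{D_k;2l}$ and $p$ a block-start of $\mathfrak{v}$. Summing over configurations gives immediately $|\mathcal{P}|=l\,|\mathfrak{V}_{D_k;2l}|$, which is the over-count I will later divide out.

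Next I would evaluate $|\mathcal{P}|$ through a bijection that ``unrolls'' the circle at the marked vertex. To a pair $(\mathfrak{v},p)$ associate the absolute position $p\in\{1,\dots,\bar{n}\}$ together with the linear binary string $s$ of length $\bar{n}$ read clockwise from $p$, i.e.\ $s_j=\mathbbm{1}[(p+j-1)\bmod\bar{n}\in\mathfrak{v}]$. Because $p$ is a block-start, $s$ begins with a $1$ and ends with a $0$, and it has $k$ ones arranged in exactly $l$ runs (hence $l$ runs of zeros). This assignment is a bijection onto the product of $\{1,\dots,\bar{n}\}$ with the set of such strings: from $(p,s)$ one recovers $\mathfrak{v}$ by placing the ones of $s$ starting at $p$, and $p$ is then automatically a block-start. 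A string of the required shape is determined by the ordered block sizes $a_1,\dots,a_l\ge1$ of the ones with $\sum_i a_i=k$ and the ordered gap sizes $b_1,\dots,b_l\ge1$ of the zeros with $\sum_i b_i=\bar{n}-k$; these are independent compositions, numbering $\binom{k-1}{l-1}$ and $\binom{\bar{n}-k-1}{l-1}$ respectively, both nonzero since $l\le k\le\bar{n}-k$. Thus $|\mathcal{P}|=\bar{n}\binom{k-1}{l-1}\binom{\bar{n}-k-1}{l-1}$, and dividing by $l$ yields the claimed identity.

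The only delicate point is the bookkeeping in the third step: one must check that marking a block-start and unrolling is genuinely invertible and that, because the vertices of the cycle are labelled, distinct rotations give distinct subsets, so no Burnside/P\`olya averaging is required. This is precisely why the clean factor $\bar{n}/l$ appears instead of an orbit-counting correction, and it is worth stressing since the surrounding narrative passes through necklaces. I would verify invertibility by exhibiting the explicit inverse above and confirm that the run-count is preserved under unrolling. A small sanity check on $\bar{n}=4$, $k=2$ (giving $|\mathfrak{V}_{D_k;2}|=4$ adjacent pairs and $|\mathfrak{V}_{D_k;4}|=2$ antipodal pairs, totalling $\binom{4}{2}$) matches the formula and guards against an off-by-one in the compositions.
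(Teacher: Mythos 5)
Your proof is correct, but it takes a genuinely different route from the paper's. You double-count the set of pairs $(\mathfrak{v},p)$ with $p$ a block-start: each configuration with $l$ components contributes exactly $l$ marks, while unrolling the circle at the mark gives a bijection with $\{1,\hdots,\bar{n}\}$ times the set of linear strings determined by a composition of $k$ into $l$ parts and a composition of $\bar{n}-k$ into $l$ parts, so $l\,|\mathfrak{V}_{D_k;2l}|=\bar{n}\binom{k-1}{l-1}\binom{\bar{n}-k-1}{l-1}$ and the factor $\bar{n}/l$ appears at once. The paper instead fixes a single vertex $v_1$ and splits according to whether $v_1$ is occupied, counting $\binom{k}{l}\binom{\bar{n}-k-1}{l-1}$ configurations in the first case and $\binom{k-1}{l-1}\binom{\bar{n}-k}{l}$ in the second, and must then invoke a preliminary algebraic manipulation (expanding $\bar{n}=k+(\bar{n}-k)$) to recombine the two terms into the stated form. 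Your decomposition buys directness: no auxiliary binomial identity is needed, and the division by $l$ is justified by the trivial observation that every configuration in $\mathfrak{V}_{D_k;2l}$ has exactly $l$ block-starts. The paper's case split avoids any division step but pays for it with the identity. You are also right to stress that, since the vertices are labelled, no Burnside/P\`olya averaging is involved despite the surrounding narrative invoking necklaces — the paper's own proof quietly makes the same move by declaring two necklaces identical iff they are equal as subsets. Both arguments correctly use $k\leq\lfloor\bar{n}/2\rfloor$ only to guarantee that both compositions are nonempty when $l\leq k$.
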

\begin{proof}
	First, note that
	\begin{align*}
		\binom{k-1}{l-1}\binom{\bar{n}-k-1}{l-1}\dfrac{\bar{n}}{l} &= \binom{k-1}{l-1}\binom{\bar{n}-k-1}{l-1}\dfrac{\bar{n}-k+k}{l}\\
		&=  \binom{k-1}{l-1}\binom{\bar{n}-k-1}{l-1}\dfrac{k}{l} + \binom{k-1}{l-1}\binom{\bar{n}-k-1}{l-1}\dfrac{\bar{n}-k}{l}\\
		&=  \binom{k}{l}\binom{\bar{n}-k-1}{l-1} + \binom{k-1}{l-1}\binom{\bar{n}-k}{l}.
	\end{align*}
	This underlines the symmetry of the problem in $k$ and $\bar{n}-k$. Based on Figure \ref{fig:particles_on_cycle_graph} we identify the problem with counting necklaces of two colors, which are fully defined by the sets $\mathfrak{v}\in\mathfrak{V}_k$. The colors are assumed to be blue and red, where blue represents an occupied site and red an unoccupied site. We want to calculate the number of distinct blue-red colored necklaces respecting rotational symmetry, i.e., two necklaces $\mathfrak{v},\mathfrak{w}\in \mathfrak{V}_k$ are considered identical if and only if $\mathfrak{v}=\mathfrak{w}$. By fixing one vertex of the cycle, without loss of generality $v_1\in V$, we can consider necklaces starting with a blue beat, continuing then counterclockwise. \\
	Assume that $v_1$ is occupied, i.e, carries a blue beat. Place $k$ blue beats on a line of length $k$ and select $l$ of them without replacement. There are $\binom{k}{l}$. Now, separate $\bar{n}-k$ in $l$ positive parts, i.e, create a vector $(n_1,\hdots,n_l)$ with $n_i\in\mathbb{N}^{\ast}$ and $\sum_{i=1}^l n_i=\bar{n}-k$, which is possible in $\binom{\bar{n}-k-1}{l-1}$ ways. Place $n_i$ red beats after the $i$-th drawn blue beat. This construction gives, consequently, all necklaces starting with a blue beat and, thus, there are $\binom{k}{l}\binom{\bar{n}-k-1}{l-1}$ such necklaces.\\
	By symmetry there are $\binom{k-1}{l-1}\binom{\bar{n}-k}{l}$ necklaces starting with a red beat. Summing both terms yields the claim.     
\end{proof}
It turns out that the case $\bar{d}=\bar{n}-2$ is much more forgiving in terms of the difficulty of deriving $|\mathfrak{V}_{D_k;d}|$ for some fixed $d\in D_k$ which in the previous case demanded the utilization of results from combinatorics. In what follows we can reduce the problem to considerations on the graph complement of a $\bar{d}$-regular graph with $\bar{d}=\bar{n}-2$. In Figure \ref{fig:graph_complement_d_bar_equal_n_bar_minus_two} one can see the translation from the graph $L$ to its complement $L^c$ in the sens that $L^c$ contains exactly the edges which are not contained in $L$ while preserving the vertex set. This renders $L^c$ a disjoint union of paths of length $1$, exhibiting $\frac{\bar{n}}{2}$ connected components.

\begin{figure}[!htb]
	\centering
	\begin{tikzpicture}[scale = 0.8]
		\Vertex[x=0,y=0]{t1}
		\Vertex[x=-2,y=1]{t2}
		\Vertex[x=-2,y=3]{t3}
		\Vertex[x=0,y=4]{t4}
		\Vertex[x=2,y=3]{t5}
		\Vertex[x=2,y=1]{t6}
		\Edge(t1)(t2)
		\Edge(t1)(t3)
		\Edge(t1)(t6)
		\Edge(t1)(t5)
		\Edge(t2)(t3)
		\Edge(t2)(t4)
		\Edge(t2)(t5)
		\Edge(t3)(t4)
		\Edge(t3)(t6)
		\Edge(t4)(t5)
		\Edge(t4)(t6)
		\Edge(t5)(t6)
		\draw[{Latex}-{Latex}, ultra thick] (3,2) -- (6,2);
		\Vertex[x=9,y=0]{p1}
		\Vertex[x=7,y=1]{p2}
		\Vertex[x=7,y=3]{p3}
		\Vertex[x=9,y=4]{p4}
		\Vertex[x=11,y=3]{p5}
		\Vertex[x=11,y=1]{p6}
		\Edge(p1)(p4)
		\Edge(p2)(p6)
		\Edge(p3)(p5)
	\end{tikzpicture}
	\caption{Translation from the $4$-regular graph $L$ on six vertices to the disjoint union of three paths of length $1$, which corresponds to the complement graph of $L$, denoted by $L^c$. \label{fig:graph_complement_d_bar_equal_n_bar_minus_two}}
\end{figure}
\begin{lemma}\label{lem:deg_set_n_m_2}
	Let $L$ be a $\bar{n}-2$ regular graph and $k\in\left\{1,\hdots,\frac{\bar{n}}{2}\right\}$. Then,
	\begin{equation}
		D_k = \left\{k(\bar{n}-2)-k(k-1) + 2l\;\bigg|\;l\in\left\{0,\hdots,\left\lfloor\frac{k}{2}\right\rfloor\right\}\right\} 
	\end{equation}		 
	and, consequently, any degree is even.
\end{lemma}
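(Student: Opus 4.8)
The plan is to reduce everything to the degree formula of Proposition~\ref{prop:degree_formula_in_tilde_H_k_r} and to exploit the fact that the complement of an $(\bar n-2)$-regular graph is a perfect matching. First I would note that each vertex of $L^c$ has degree $\bar n-1-(\bar n-2)=1$, so $L^c$ is a $1$-regular graph; since a $1$-regular graph is a disjoint union of single edges, $L^c$ is a perfect matching on $\tfrac{\bar n}{2}$ edges (in particular $\bar n$ is even, as already illustrated in Figure~\ref{fig:graph_complement_d_bar_equal_n_bar_minus_two}). By Proposition~\ref{prop:degree_formula_in_tilde_H_k_r} we have $\mathrm{deg}_k(\mathfrak v)=k\bar d-2|E_{\mathfrak v}|$, and using the complement identity $|E_{\mathfrak v}|+|E_{\mathfrak v}^c|=\tfrac{k(k-1)}{2}$ from the proof of Lemma~\ref{lem:L_and_complement_L_link_Johnson_graph} I would rewrite this as
\begin{equation*}
\mathrm{deg}_k(\mathfrak v)=k(\bar n-2)-k(k-1)+2|E_{\mathfrak v}^c|.
\end{equation*}
Thus the whole problem collapses to determining the set of values taken by $|E_{\mathfrak v}^c|$ as $\mathfrak v$ ranges over $\mathfrak V_k$.

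Second, I would analyse $|E_{\mathfrak v}^c|$ using the matching structure of $L^c$. Since $L^c$ is a perfect matching, the induced subgraph $L_{\mathfrak v}^c$ contains exactly those matching edges both of whose endpoints lie in $\mathfrak v$; hence $|E_{\mathfrak v}^c|$ equals the number of matching edges fully contained in the chosen $k$-set. The minimal value is $0$, attained by selecting one endpoint from each of $k$ distinct matching edges, which is possible because there are $\tfrac{\bar n}{2}\ge k$ matching edges. The maximal value is $\lfloor k/2\rfloor$, attained by pairing up as many of the $k$ vertices as possible into complete matching edges. To see that every intermediate value $l\in\{0,\dots,\lfloor k/2\rfloor\}$ occurs, I would exhibit a configuration directly: take $l$ matching edges in full (using $2l$ vertices) and complete the set with $k-2l$ single endpoints drawn from distinct, otherwise unused matching edges; this is feasible since $k-2l\le \tfrac{\bar n}{2}-l$, which follows from $k\le\tfrac{\bar n}{2}$.

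Finally, plugging $|E_{\mathfrak v}^c|=l$ into the rewritten degree formula yields
\begin{equation*}
D_k=\left\{k(\bar n-2)-k(k-1)+2l\;\bigg|\;l\in\left\{0,\dots,\left\lfloor\tfrac{k}{2}\right\rfloor\right\}\right\},
\end{equation*}
which is exactly the claimed set. Evenness is then immediate: $k(k-1)$ is a product of consecutive integers and hence even, $k(\bar n-2)$ is even because $\bar n$ is even, and $2l$ is even, so every element of $D_k$ is even — consistent with the criterion of Lemma~\ref{lem:isom_of_bipartites_implies_same_degree} that $\mathrm{deg}_k(\mathfrak v)$ is even if and only if $k\bar d$ is even. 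I expect the only delicate point to be the surjectivity argument in the second step, namely verifying that each intermediate edge-count $l$ is realizable by an explicit $k$-subset; once the perfect-matching description of $L^c$ is in hand this is a short counting check, so no serious obstacle remains.
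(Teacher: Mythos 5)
Your proposal is correct and follows essentially the same route as the paper's proof: both reduce to the complement $L^c$, observe it is a perfect matching on $\tfrac{\bar n}{2}$ edges, and identify the degree offset $2l$ with the number of matching edges fully contained in $\mathfrak v$. The only difference is that you additionally verify that every $l\in\{0,\dots,\lfloor k/2\rfloor\}$ is realizable by an explicit $k$-subset, a direction the paper's proof leaves implicit; this is a welcome completion rather than a divergence in method.
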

\begin{proof}
	We employ the idea based on Figure \ref{fig:graph_complement_d_bar_equal_n_bar_minus_two} that it is sufficient to consider the graph complement $L^c$.	Let $d\in D_k$. Then there exists a $\mathfrak{v}\in\mathfrak{V}_k$ such that $\mathrm{deg}_k(\mathfrak{v}) = d$. Since $\mathfrak{v}\subset V$ it also spans a vertex induced sub-graph in $L^c$. Denote by $l$ the number of paths in $L^c$ which are fully contained in $\mathfrak{v}$, i.e., both vertices of the path are elements of $\mathfrak{v}$. Then, $l\in \left\{0,\hdots,\left\lfloor\frac{k}{2}\right\rfloor\right\}$ and 
	\begin{equation}
		d = \mathrm{deg}_k(\mathfrak{v}) = k(\bar{n}-2) -  k(k-1) + 2l.
	\end{equation}
	Since $\bar{n}-2$ is even, $k(k-1)$ is even and so is $2l$ for any positive integer $l$ we obtain that all degrees are even.
\end{proof}
Based on this correspondence between $L$ and $L^c$ we can also derive the size of each $\mathfrak{V}_{D_k;d}$ for $d\in D_k$. Additionally, we can derive $l_{\mathfrak{v}}:=|\{\mathrm{deg}_k(\mathfrak{w})|\mathrm{deg}_k(\mathfrak{w})>\mathrm{deg}_k(\mathfrak{v})\}|$. Indeed, if $\mathrm{deg}_k(\mathfrak{v})=k(\bar{n}-2) -  k(k-1) + 2l$, then $l_{\mathfrak{v}}=\left\lfloor\frac{k}{2}\right\rfloor-l$.
\begin{lemma}
	Let $L$ be a $\bar{n}-2$ regular graph and $k\in\left\{1,\hdots,\frac{\bar{n}}{2}\right\}$. Then, for $j_d\in\left\{0,\hdots,\left\lfloor\frac{k}{2}\right\rfloor\right\}$ the set, corresponding to the degree $	d = k(\bar{n}-2) -  k(k-1) + 2j_d\in D_k$, satisfies the identity 
	\begin{equation}
		|\mathfrak{V}_{D_k;d}|=\sum_{l=0}^k \binom{\frac{\bar{n}}{2}}{l}\binom{l}{j_d}\binom{\frac{\bar{n}}{2}-l}{k-l-j_d}.
	\end{equation}
\end{lemma}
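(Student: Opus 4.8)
The plan is to argue, as in the proof of Lemma~\ref{lem:deg_set_n_m_2}, through the complement $L^c$. Since $L$ is $(\bar{n}-2)$-regular, the graph $L^c$ is $1$-regular, hence a perfect matching consisting of $m:=\frac{\bar{n}}{2}$ disjoint edges $e_1,\dots,e_m$ (see Figure~\ref{fig:graph_complement_d_bar_equal_n_bar_minus_two}; this also forces $\bar{n}$ even). First I would recall from Lemma~\ref{lem:deg_set_n_m_2} that for $\mathfrak{v}\in\mathfrak{V}_k$ one has $\mathrm{deg}_k(\mathfrak{v})=k(\bar{n}-2)-k(k-1)+2j$, where $j$ is the number of matching edges both of whose endpoints lie in $\mathfrak{v}$. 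Consequently, for the degree $d=k(\bar{n}-2)-k(k-1)+2j_d$ the level set $\mathfrak{V}_{D_k;d}$ is exactly the collection of $k$-subsets of $V$ containing precisely $j_d$ of the $m$ matching edges. This reduces the statement to a purely combinatorial count on a perfect matching.

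To recover the sum in the stated shape, I would designate in each edge $e_i=\{a_i,b_i\}$ a representative endpoint $a_i$, and then partition $\mathfrak{V}_{D_k;d}$ according to $l:=|\{i:a_i\in\mathfrak{v}\}|$, the number of edges whose representative lies in $\mathfrak{v}$. For a fixed $l$, a subset $\mathfrak{v}$ with exactly $j_d$ full edges is determined by three independent choices: the $l$ edges contributing their representative ($\binom{m}{l}$ ways); which $j_d$ of these $l$ edges are in fact full, i.e.\ also contain $b_i$ ($\binom{l}{j_d}$ ways); and which $k-l-j_d$ of the remaining $m-l$ edges (those with $a_i\notin\mathfrak{v}$) contribute their other endpoint $b_i$ ($\binom{m-l}{k-l-j_d}$ ways). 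A short bookkeeping check shows the number of selected vertices is $2j_d+(l-j_d)+(k-l-j_d)=k$ and that the number of full edges is exactly $j_d$, so each such $\mathfrak{v}$ is counted once and only once. Summing over all $l$ and invoking the convention $\binom{\bar{n}}{z}=0$ for $z\in\mathbb{Z}^-$ to silently discard out-of-range terms then yields
\begin{equation*}
	|\mathfrak{V}_{D_k;d}| = \sum_{l=0}^{k} \binom{\frac{\bar{n}}{2}}{l}\binom{l}{j_d}\binom{\frac{\bar{n}}{2}-l}{k-l-j_d}.
\end{equation*}

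As a consistency check I would collapse the sum to closed form: using $\binom{m}{l}\binom{l}{j_d}=\binom{m}{j_d}\binom{m-j_d}{l-j_d}$, substituting $t=l-j_d$, and applying $\binom{M}{t}\binom{M-t}{K-t}=\binom{M}{K}\binom{K}{t}$ with $M=m-j_d$ and $K=k-2j_d$, the sum reduces via $\sum_t\binom{K}{t}=2^{K}$ to $\binom{m}{j_d}\binom{m-j_d}{k-2j_d}2^{k-2j_d}$. This matches the more direct count obtained by first choosing the $j_d$ full edges, then choosing $k-2j_d$ further edges each contributing a single endpoint with a binary left/right choice, which validates the formula independently.

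The only genuinely delicate point is the very first step: one must be certain that $\mathrm{deg}_k(\mathfrak{v})$ depends on $\mathfrak{v}$ solely through the number $j$ of fully contained matching edges, so that fixing $d$ is equivalent to fixing $j_d$. Everything afterward is elementary enumeration, with the remaining care confined to checking that the partition by $l$ assigns each $k$-subset to exactly one term and that the extremal values of $l$ are correctly annihilated by the binomial convention.
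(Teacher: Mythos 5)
Your proof is correct and follows essentially the same route as the paper: the paper's device of writing the vertices in $\frac{\bar{n}}{2}$ columns and two rows (matching edges of $L^c$ being the columns) is exactly your choice of a representative endpoint $a_i$ per matching edge, and the three-factor count indexed by $l$ is identical. Your additional closed-form collapse to $\binom{m}{j_d}\binom{m-j_d}{k-2j_d}2^{k-2j_d}$ is a worthwhile sanity check that the paper does not include, but it does not change the argument.
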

\begin{proof}
	Fix $j_d\in\left\{0,\hdots,\left\lfloor\frac{k}{2}\right\rfloor\right\}$. We write the vertices $v\in V$ in $\frac{\bar{n}}{2}$ columns and two rows where two vertices connected by an edge in $L^c$ belong to the same column. First, place $l$ particles in the first row. We have $\binom{\frac{\bar{n}}{2}}{l}$ possibilities to do so. To obtain a configuration $\mathfrak{v}$ with $\mathrm{deg}_k(\mathfrak{v})=k(\bar{n}-2) -  k(k-1) + 2j_d$ we have to place precisely $j_d$ of the remaining $k-l$ particles in the second row among the neighbors of the already occupied vertices. We have $\binom{l}{j_d}$ to obtain this. Finally, we distribute the remaining $k-l-j_d$ particles on the remaining $\frac{\bar{n}}{2}-l$ vertices in the second row which is possible in $\binom{\frac{\bar{n}}{2}-l}{k-l-j_d}$ ways. To obtain the total, we sum over all possibilities for $l\in\{0,\hdots,k\}$ which leads to the formula
	\begin{equation*}
		|\mathfrak{V}_{D_k;d}|=\sum_{l=0}^k \binom{\frac{\bar{n}}{2}}{l}\binom{l}{j_d}\binom{\frac{\bar{n}}{2}-l}{k-l-j_d}.
	\end{equation*}
\end{proof}
We could demonstrate that for certain special cases of $k$ and $\bar{d}$ it is possible to obtain explicit results on the structure of $\mathfrak{L}_k$ associated to the underlying graph $L$. In particular, we exploited in the cases where we fixed $\bar{d}$, that there is exactly one regular graph with this degree. Deviating from these special cases immediately yields non-uniqueness of the underlying graph $L$. For example, consider the case $\bar{n}=6$ and $\bar{d}=3$. Then, there are already two graphs with vastly different properties, which satisfy this condition. One of them even is bipartite, while the other is not, which has by Proposition \ref{prop:L_bipartite_L_k_bipartite} important influences on the structure of the associated $\mathfrak{L}_k$.
\end{document}